\DeclareMathOperator{\st}{s.t.}
\DeclareMathOperator{\sign}{sign}
\DeclareMathOperator*{\argmin}{argmin}
\DeclareMathOperator{\proj}{\mathcal{P}}
\newcommand{\vect}{\boldsymbol}
\title{$\epsilon$-Subgradient Algorithms for Bilevel Convex Optimization\thanks{E.S. Helou was supported by FAPESP grants 2013/07375-0 and 2013/16508-3 and CNPq grant 311476/2014-7. L.E.A. Simões was supported by FAPESP grant 2013/14615-7.}}
\begin{document}

   \author{%
      Elias S. Helou and Lucas E. A. Simões%
   }

   \institute{%
      Elias S. Helou \at Department of Applied Mathematics and Statistics, Institute of Mathematical Sciences and Computation -- USP, São Carlos, \href{mailto:elias@icmc.usp.br}{\tt elias@icmc.usp.br}\\
      Lucas E. A. Simões \at Institute of Mathematics, Statistics and Scientific Computing -- UNICAMP, Campinas
   }%

	\maketitle

	\begin{abstract}
            This paper introduces and studies the convergence properties of a new class of explicit $\epsilon$-subgradient methods for the task of minimizing a convex function over the set of minimizers of another convex minimization problem. The general algorithm specializes to some important cases, such as first-order methods applied to a varying objective function, which have computationally cheap iterations.

            We present numerical experimentation regarding certain applications where the theoretical framework encompasses efficient algorithmic techniques, enabling the use of the resulting methods to solve very large practical problems arising in tomographic image reconstruction.
	\end{abstract}

   \subclass{65K10, 90C25, 90C56}
	\keywords{convex optimization, nondifferentiable optimization, bilevel optimization, $\epsilon$-subgradients}


   \markboth{Helou and Simões}{Subgradient Bilevel Convex Optimization}

	\section{Introduction}

Our aim in the present paper is to solve a \emph{bilevel} or
\emph{hierarchical} optimization problem of the form
\begin{equation}\label{eq:bilevel}
   \begin{split}
      \min & \quad f_1( \vect x )\\
      \st  & \quad \vect x\in \argmin_{\vect y \in X_0} f_0(
\vect y )\text,
   \end{split}
\end{equation}
where $f_i:\mathbb R^n\to\mathbb R$ ($i \in \{ 0, 1 \}$) are
convex functions and $X_0$ is a nonempty closed convex set.

Bilevel problems like~\eqref{eq:bilevel} have already been
considered in the literature. For example, for the case $X_0
= \mathbb R^n$, Cabot~\cite{cab05} suggests the use of the
following algorithm:
\begin{equation}\label{eq:cabot}
   -\frac{\vect x_{k + 1} - \vect x_{k}}{\lambda_k} \in
\partial_{\epsilon_k}( f_0 + \eta_kf_1 )( \vect x_{k + 1}
)\text,
\end{equation}
where $\partial_\epsilon f( \vect x )$ is the $\epsilon$-subdifferential of $f$ at $\vect x$:
\begin{equation*}
   \partial_\epsilon f( \vect x ) := \{ \vect v : f( \vect y ) \geq f( \vect x ) + \vect v^T( \vect y - \vect x ) - \epsilon, \quad \forall \vect y \in \mathbb R^n \},
\end{equation*}
$\eta_k \to 0^+$ and $\lambda_k$ is a nonnegative stepsize. Such iterations are reminiscent of approximate proximal methods, in the sense that~\eqref{eq:cabot} is equivalent to (we follow the notation of~\cite{cab05}):
\begin{equation}\label{eq:cabot'}
   \vect x_{k + 1} \in \epsilon_k\mbox{{}-{}}\argmin_{\vect x
   \in X_0}\left\{\frac1{2\lambda_k}\| \vect x - \vect x_{k}
   \|^2 + f_0( \vect x ) + \eta_kf_1( \vect x
   )\right\}\text.
\end{equation}
While method~\eqref{eq:cabot} is powerful and conceptually simple, its application may be complicated by the implicit formulation. Assuming differentiability, iteration~\eqref{eq:cabot} can also be interpreted as a discretization of the continuous dynamical system
\begin{equation*}
   \dot{\vect x}( t ) + \nabla f_0\bigl( \vect x( t ) \bigr)
   + \eta( t )\nabla f_1\bigl( \vect x( t ) \bigr) = 0.
\end{equation*}
Another way of discretizing this system is to do it
explicitly, that is, to use iterations similar to
\begin{equation}\label{eq:tobac}
   -\frac{\vect x_{k + 1} - \vect x_{k}}{\lambda_k} \in
   \partial_{\epsilon_k} ( f_0 + \eta_kf_1 )( \vect x_{k} )
   \text.
\end{equation}
or in two steps:
\begin{equation}\label{eq:tobacTwoStep}
   \begin{split}
      -\frac{\vect x_{k + 1 / 2} - \vect x_{k}}{\lambda_k} & {}\in \partial_{\epsilon^0_k} f_0( \vect x_{k} )\\
      -\frac{\vect x_{k + 1} - \vect x_{k + 1 / 2}}{\lambda_k\eta_k} & {}\in \partial_{\epsilon^1_k} f_1( \vect x_{k + 1 / 2} ).
   \end{split}
\end{equation}
Among the consequences of the results we will present in this paper, there are sufficient conditions on the sequences $\{\lambda_k\}$, $\{\epsilon_k\}$ and $\{\eta_k\}$ for the convergence of iterations~\eqref{eq:tobacTwoStep} to the solution of problem~\eqref{eq:bilevel}. In fact, convergence of iterations~\eqref{eq:tobac} could also be proven using our abstract results, but would require more restrictive conditions on $\epsilon_k$ and we will keep this topic off the present paper.

Despite the fact that algorithms \eqref{eq:cabot}~and~\eqref{eq:tobacTwoStep} are formally very similar, they differ significantly in both practical and theoretical aspects. At the practical side, implementation of an explicit iteration like~\eqref{eq:tobacTwoStep} requires little more than evaluation of suitable $\epsilon$-subgradients. On the other hand, while strong convexity makes this minimization perhaps be more computationally amenable than approximately optimizing $f_0 + \eta_kf_1$, computing \eqref{eq:cabot'} is still a nontrivial task. Furthermore, even if it allows for some tolerance $\epsilon_k$ in the optimization subproblem, current theory requires $\sum_{k = 0}^\infty \epsilon_k < \infty$ to ensure convergence of~\eqref{eq:cabot}, which means that this tolerance will decrease quickly.

Arguably, an algorithm such as~\eqref{eq:tobacTwoStep} would be among the easiest to implement methodology for solving~\eqref{eq:bilevel} in the general case. For example, Solodov~\cite{sol07,sol08} has also provided algorithms for such bilevel problems: in~\cite{sol07}, where only the differentiable case is considered, the proposed algorithm has the form~\eqref{eq:tobac} with $\epsilon_k \equiv 0$ and $\lambda_k$ selected through a line search with a sufficient decrease criterion based on the value of $f_0 + \eta_kf_1$. While this procedure is still simple to implement on the differentiable case, descent directions are harder to be found in the presence of nondifferentiability and in~\cite{sol08} a bundle technique is used to this end. This approach requires sophisticated quadratic solvers, which complicates implementations. Another, partial and perturbed, descent method was developed by Helou and De Pierro~\cite{hed11}, where only sufficient decrease of $f_0$, assumed to be smooth, is enforced, alleviating the need of a descent direction for $f_1$. But this technique may still  require multiple evaluations of $f_0$, while an iterative process like~\eqref{eq:tobacTwoStep}, differently, does not require any $f_i$ value because it does not rely on descent criteria. Furthermore, it should be remarked that the fact that \eqref{eq:tobacTwoStep} allows for inexactness in the computation has positive impact in algorithmic performance, which we will illustrate through experimental work. An approach currently available in the literature which is similar to a special case of the techniques that we can analyze within our framework can be found in~\cite{brz13}. In this work the results appear to be restricted to monotone methods for a quadratic residual function, but a practical stopping criterion based on the discrepancy principle is given. Another recent work dealing with this kind of problem is~\cite{bes14} where a first order algorithm is proposed and convergence analysis including rates is provided. While we do not provide convergence rates, the theory we present requires less hypothesis on the objective functions and seems to give rise to a wider range of practical algorithms.

%

\subsection{Contributions and Outline of the Paper}

The main contribution of the present paper is as follows.
The thread lead by Cabot and followed by Solodov is based
on the idea of applying classical convex minimization
algorithms to the ever-changing objective function $f_0 +
\eta_kf_1$. It started with the ``tight'' near-minimization
from~\cite{cab05} and evolved to the less stringent
sufficient decrease policy of~\cite{sol07,sol08}. We here
pave this way one step further by showing that the same
principle is applicable to the more anarchic
nonmonotone $\epsilon$-subgradient techniques.

Several unconstrained optimization algorithms have
iterations that can be described as $\epsilon$-subgradient
steps, among which we can mention the incremental
subgradient methods~\cite{neb01}, the aggregated incremental
gradient of~\cite{bhg07} when applied to the
nondiferentiable case~\cite{hed11}, the recent incremental
proximal method~\cite{ber11}, and Polyak's heavy ball
method (let $\tilde\nabla f( \vect x ) \in \partial f(
\vect x )$):
\begin{equation*}
   \vect x_{k + 1} := \vect x_k - \lambda_k \bigl(
   \tilde\nabla f( \vect x_k ) + \alpha( \vect x_k - \vect
   x_{k - 1}) \bigr).
\end{equation*}
The theory we develop will, therefore, cope with all the just mentioned
cases simultaneously. That is, we show that application of
any of these algorithms to the varying objective function
$f_0 + \eta_k f_1$ will converge to the solution of the
bilevel problem~\eqref{eq:bilevel} under assumptions on the
stepsize $\lambda_k$ which are not much different from those
required in the one level case.

We present numerical experimentation showing the effectiveness of the technique when applied to high-resolution micro-tomographic image reconstruction from simulated and from real synchrotron radiation illumination projection data. In this case the amount of data and the number of variables is very large, requiring efficient algorithms with computationally cheap iterations. In this context, important practical contributions are the introduction of certain perturbed, \textsc{fista}-inspired~\cite{bet09} algorithms, resulting in effective methods for problems like~\eqref{eq:bilevel} with Lipschitz-differentiable $f_0$. Furthermore, when solving an instance with a non-differentiable $f_0$ composed as a sum of many convex functions, incremental techniques are very efficient in the first iterations, also resulting in good algorithmic performance. Both the theoretical analysis and the application of these practical algorithms to the bilevel problem~\eqref{eq:bilevel} are new.

Our methods can be seen as perturbations of classical algorithms, in the spirit of the superiorization approach~\cite{gah14}. However, we show more powerful convergence results because we impose some structure on what would otherwise be called a superiorization sequence. We believe that this is a major contribution of the paper because opens the possibility of pursuing bilevel results alongside with superiorization techniques.

   \section{Theoretical Analysis}

%
%

\subsection{Stepsize Considerations}

Recall from the theory of $\epsilon$-subgradient
methods~\cite{col93} for the one level case (that is,
problem~\eqref{eq:bilevel} with $f_1 \equiv 0$)
that convergence of iterations~\eqref{eq:tobac} to a
solution, under mild extra assumptions of subgradient boundedness,
can be ensured with slowly diminishing stepsizes satisfying:
\begin{equation*}
   \sum_{k = 0}^\infty\lambda_k = \infty\quad\text{and}\quad
   \lambda_k \to 0^+\text.
\end{equation*}
The non-summability hypothesis seems necessary. Vanishing stepsizes, however, may have the negative effect of slowing down asymptotic convergence of the algorithm. Therefore, owing to its computationally cheap iteration, methods like~\eqref{eq:tobac} are usually thought to be most competitive when the problem size is very large, or when highly accurate solutions are not required. However, in some important particular (with smooth primary objective function $f_0$) cases supported by the theory developed here, the stepsize $\lambda_k$ does not necessarily have to vanish and in such applications we obtain reasonably fast algorithms. Both stepsize regimes (decreasing and non-decreasing) are evaluated in the experimental work we present and we shall see that incremental techniques are efficient too in the cases where its good characteristics apply, even if with vanishing stepsizes.

For the classical (one level) convex optimization problem,
proximal methods~\cite{roc76} require $\lambda_k \geq
\underline\lambda$ for some $\underline\lambda > 0$. For the
bilevel case, the same stepsize requirement, with an extra 
upper boundedness assumption, i.e.,
\begin{equation}\label{eq:cabotlambda}
    0 < \underline\lambda \leq \lambda_k \leq \overline\lambda,
\end{equation}
would ensure convergence of~\eqref{eq:cabot}
to the optimizer of~\eqref{eq:bilevel}.
For such results to hold~\cite{cab05}, the extra assumption of a
\emph{slow control}:
\begin{equation}\label{eq:caboteta}
    \sum_{k=0}^\infty \eta_k = \infty\quad\text{and}\quad \eta_k \to 0^+,
\end{equation}
was made in order to ensure that the influence of $f_1$ throughout the iterations was strong enough while still becoming arbitrarily small. The alternative form below is more appropriate to us this time, as it will generalize immediately to our algorithms:
\begin{equation}\label{eq:slowcontrol}
   \sum_{k = 0}^\infty \lambda_k\eta_k = \infty
   \quad\text{and}\quad \eta_k \to 0^+\text.
\end{equation}
Notice that if~\eqref{eq:cabotlambda} holds, then~\eqref{eq:slowcontrol} is equivalent  to~\eqref{eq:caboteta}. However, because the net  contribution to each iteration from $\partial f_1( \vect x_k )$ in algorithm~\eqref{eq:tobac} is actually $O( \lambda_k\eta_k )$, \eqref{eq:slowcontrol} generalizes to the case $\lambda_k \to 0^+$, while~\eqref{eq:caboteta} does not.

\subsection{Formal Algorithm Description}

The stepping stone of our analysis will be an abstract
three-step algorithm given as follows:
\begin{equation}\label{eq:algo}
   \begin{split}
      \vect x_{k + 1 / 3} &{}:= \mathcal{O}_{f_0}( \lambda_k, \vect x_k )\text;\\
      \vect x_{k + 2 / 3} &{}:= \mathcal{O}_{f_1}( \mu_k, \vect x_{k + 1 / 3} )\text;\\
      \vect x_{k + 1} &{}:= \mathcal{P}_{X_0}( \vect x_{k + 2 / 3} )\text,
   \end{split}
\end{equation}
where the operators $\mathcal{O}_{f_i}$ for $i \in \{ 0, 1 \}$ have specific conceptual roles and must satisfy certain corresponding properties, which we will discuss right next, and, for a nonempty convex and closed set $X$, $\mathcal{P}_{X}$ is the projector:
\begin{equation*}
   \mathcal{P}_X( \vect x ) := \argmin_{\vect y \in X}\| \vect x - \vect y \|.
\end{equation*}
Sequences $\{ \lambda_k \}$ and $\{ \mu_k \}$ are stepsize sequences. The first one plays, in this abstract setting, the same role it plays in iterative scheme~\eqref{eq:tobac}, while sequence $\{ \mu_k \}$ should be identified with $\{ \lambda_k\eta_k \}$. 
Therefore, application of $\mu_k=\lambda_k\eta_k$ in~\eqref{eq:slowcontrol} leads immediately to
\begin{equation*}\label{eq:muassump}
   \sum_{k = 0}^\infty\mu_k = \infty\quad\text{and}\quad
   \frac{\mu_k}{\lambda_k} \to 0^+\text.
\end{equation*}

Let us then describe the properties required for the \emph{optimality operators} $\mathcal O_{f_i}$. The imposed characteristics are easy to meet, as we later illustrate.

\begin{property}\label{prop:propopt1}
   There is $\beta > 0$ such that for any $\lambda \geq 0$ and for all $\vect x_{k + i / 3}$, $\vect y \in \mathbb R^n$, and $i \in \{ 0, 1 \}$:
   \begin{multline*}
      \| \mathcal O_{f_i}( \lambda, \vect x_{k + i / 3} ) - \vect y \|^2 \leq \| \vect x_{k + i / 3} - \vect y \|^2 - \beta\lambda\bigl( f_i\bigl( \mathcal O_{f_i}( \lambda, \vect x_{k + i / 3} ) \bigr) - f_i( \vect y ) \bigr)\\
      {} + \lambda\rho_i( \lambda, k )\text,
   \end{multline*}
   where $\rho_i( \lambda, k )$ represents an error term, with properties to be describe later.
\end{property}

Below the description of the next property, we give an example of a class of operators which satisfy this condition. Furthermore, Subsections~\ref{subsec:algolip}, \ref{subsec:algoinc}, and \ref{subsec:secobj} bring four other instances that will be used in the experimental part of the paper: the projected gradient, the incremental subgradient, the proximal map and the iterated subgradient step. In fact, the key utility of this abstract definition is to be able to encompass several useful classical optimization steps while still ensuring sufficient qualities in order to provide convergence results. For this to be true, the error term will have to be controlled in a specific way, but, for every case we have found, the error term magnitude is bounded by a constant times the stepsize and this way we can always obtain convergent algorithms by selecting proper stepsize sequences.

%
\begin{property}\label{prop:propoptbound}
   There exists $\gamma > 0$ such that
   \begin{equation*}
      \| \vect x - \mathcal{O}_{f_i}( \lambda, \vect x )
      \|_2 \leq \lambda\gamma\text.
   \end{equation*}
\end{property}

Property~\ref{prop:propopt1} guarantees that, going from some fixed $\vect x$, the operator $\mathcal O_{f_i}$ will approach a point $\vect y$ with improved $f_i$ value if only the result of the operation does not have a better $f_i$ value than $\vect y$ and the stepsize $\lambda$ is small enough.
Property~\ref{prop:propoptbound} is no
more than a boundedness assumption on the operators which makes sure that the stepsize controls the magnitude of the movement.

These can be derived from somewhat standard hypothesis for $\epsilon$-subgradient
algorithms (see, e.g., \cite{col93}) and, as such, a
plethora of concrete realizations of such operators $\mathcal O_f$ is
possible, the most obvious being $\epsilon$-subgradient steps:
\begin{equation*}
   \mathcal S_f( \lambda, \vect x ) := \vect x - \lambda\tilde\nabla_{\epsilon} f( \vect x )\text,
\end{equation*}
where $\tilde\nabla_\epsilon f( \vect x ) \in
\partial_\epsilon f( \vect x )$. In this case we have:
\begin{equation}\label{eq:subgradstepineq}
   \|\mathcal S_f( \lambda, \vect x ) - \vect y\|_2^2 \leq
   \| \vect x - \vect y \|_2^2 - 2\lambda\bigl( f( \vect x )
   - f( \vect y ) \bigr) + \lambda\bigl( \lambda \|
   \tilde\nabla_{\epsilon}f( \vect x ) \|_2^2
   + 2\epsilon \bigr)\text.
\end{equation}
Denote $\vect z = \vect x - \lambda \tilde\nabla_{\epsilon} f( \vect x )$. Then, convexity leads to
\begin{equation*}
   f( \vect x ) \geq f( \vect z ) + \tilde\nabla f( \vect z )^T( \vect x - \vect z ) = f( \vect z ) + \lambda\tilde\nabla f( \vect z )^T\tilde\nabla_{\epsilon} f( \vect x ),
\end{equation*}
where $\tilde\nabla f( \vect z ) \in \partial f( \vect z )$. Multiplying the above inequality by $-2\lambda$ we get
\begin{equation*}
   -2\lambda f( \vect x ) \leq -2\lambda \Bigl( f\bigl( \mathcal S_f( \lambda, \vect x ) \bigr) + \lambda \tilde\nabla f\bigl( \mathcal S_f( \lambda, \vect x ) \bigr)^T\tilde\nabla_{\epsilon} f( \vect x ) \Bigr)
\end{equation*}
This with~\eqref{eq:subgradstepineq} gives
\begin{multline}\label{eq:subgradstepineqB}
   \|\mathcal S_f( \lambda, \vect x ) - \vect y\|_2^2 \leq
   \| \vect x - \vect y \|_2^2 - 2\lambda\bigl( f( \mathcal S_f( \lambda, \vect x ) )
   - f( \vect y ) \bigr) \\
   {} + \lambda\bigl( \lambda \| \tilde\nabla_{\epsilon}f( \vect x ) \|_2^2 + 2\epsilon - 2\lambda \tilde\nabla f\bigl( \mathcal S_f( \lambda, \vect x ) \bigr)^T\tilde\nabla_{\epsilon} f( \vect x ) \bigr)\text,
\end{multline}
so that we can satisfy Properties~\ref{prop:propopt1}~and~\ref{prop:propoptbound} for $\mathcal O_f = \mathcal S_f$ if we further assume $\epsilon$-subgradient boundedness (and consequently subgradient boundedness).

A straightforward generalization of the argument leading from~\eqref{eq:subgradstepineq} to~\eqref{eq:subgradstepineqB}, omitted for brevity, results in the following statement, which will be useful later:
\begin{proposition}\label{prop:oneIsOther}
   Assume an operator $\mathcal O_f : \mathbb R \times \mathbb R^n \to \mathbb R^n$ satisfies, for $\lambda > 0$
   \begin{equation*}
      \|\mathcal O_f( \lambda, \vect x ) - \vect y\|_2^2 \leq \| \vect x - \vect y \|_2^2 - 2\lambda\bigl( f( \vect x ) - f( \vect y ) \bigr) + \lambda\varrho( \lambda )\text,
   \end{equation*}
   where $\varrho( \lambda )$ is an error term, and
   \begin{equation*}
      \|\mathcal O_f( \lambda, \vect x ) - \vect x\| \leq \lambda\gamma,
   \end{equation*}
   for some $\gamma > 0$. Then, we have:
   \begin{multline*}
      \|\mathcal O_f( \lambda, \vect x ) - \vect y\|_2^2 \leq \| \vect x - \vect y \|_2^2 - 2\lambda\bigl( f( O_f( \lambda, \vect x ) ) - f( \vect y ) \bigr)\\
      {}+ \lambda\Bigl( \varrho( \lambda ) + 2 \lambda\gamma\bigl\| \tilde\nabla f\bigl( \mathcal O_f( \lambda, \vect x ) \bigr) \bigr\| \Bigr)\text,
   \end{multline*}
   where $\tilde\nabla f\bigl( \mathcal O_f( \lambda, \vect x ) \bigr) \in \partial f\bigl( \mathcal O_f( \lambda, \vect x ) \bigr)$.
\end{proposition}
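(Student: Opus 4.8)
The plan is to follow exactly the template that takes~\eqref{eq:subgradstepineq} into~\eqref{eq:subgradstepineqB}. The only discrepancy between the hypothesized descent inequality and the desired one is that the former measures progress through $f( \vect x )$ whereas the latter uses $f\bigl( \mathcal O_f( \lambda, \vect x ) \bigr)$, so it suffices to bound $-f( \vect x )$ from above by $-f\bigl( \mathcal O_f( \lambda, \vect x ) \bigr)$ plus a controlled error. Write $\vect z := \mathcal O_f( \lambda, \vect x )$ for brevity and pick any $\tilde\nabla f( \vect z ) \in \partial f( \vect z )$, which is legitimate because $f$ is real-valued and convex on $\mathbb R^n$, hence subdifferentiable everywhere.

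First I would use the subgradient inequality at $\vect z$, evaluated at the point $\vect x$, namely $f( \vect x ) \geq f( \vect z ) + \tilde\nabla f( \vect z )^T( \vect x - \vect z )$, which rearranges to $-f( \vect x ) \leq -f( \vect z ) + \tilde\nabla f( \vect z )^T( \vect z - \vect x )$. Next I would bound the inner product by Cauchy--Schwarz and then invoke the second hypothesis $\| \vect z - \vect x \| = \| \mathcal O_f( \lambda, \vect x ) - \vect x \| \leq \lambda\gamma$, which gives $\tilde\nabla f( \vect z )^T( \vect z - \vect x ) \leq \lambda\gamma\| \tilde\nabla f( \vect z ) \|$. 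Multiplying through by $2\lambda > 0$ then yields
\[
   -2\lambda f( \vect x ) \leq -2\lambda f( \vect z ) + 2\lambda^2\gamma\| \tilde\nabla f( \vect z ) \|\text.
\]

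Finally I would substitute this bound into the first hypothesis, $\| \mathcal O_f( \lambda, \vect x ) - \vect y \|_2^2 \leq \| \vect x - \vect y \|_2^2 - 2\lambda\bigl( f( \vect x ) - f( \vect y ) \bigr) + \lambda\varrho( \lambda )$, obtaining at once
\[
   \| \mathcal O_f( \lambda, \vect x ) - \vect y \|_2^2 \leq \| \vect x - \vect y \|_2^2 - 2\lambda\bigl( f( \vect z ) - f( \vect y ) \bigr) + \lambda\bigl( \varrho( \lambda ) + 2\lambda\gamma\| \tilde\nabla f( \vect z ) \| \bigr)\text,
\]
and recalling that $\vect z = \mathcal O_f( \lambda, \vect x )$ this is precisely the asserted estimate. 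I do not expect any real obstacle: the argument is entirely elementary, and the only points deserving attention are that the subgradient must be evaluated at the output point $\vect z$ rather than at $\vect x$, and that it is the $\gamma$-boundedness assumption which converts the extra inner-product term into an $O( \lambda^2 )$ correction.
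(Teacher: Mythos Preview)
Your proof is correct and is exactly the ``straightforward generalization'' the paper alludes to (the paper omits the proof, pointing to the passage from~\eqref{eq:subgradstepineq} to~\eqref{eq:subgradstepineqB}). The only cosmetic difference is that in the specific $\mathcal S_f$ case the paper keeps the inner product $\tilde\nabla f(\vect z)^T(\vect x - \vect z)$ explicit, whereas in the general case you correctly replace that step by Cauchy--Schwarz together with the hypothesis $\|\mathcal O_f(\lambda,\vect x)-\vect x\|\leq\lambda\gamma$.
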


Our analysis will focus on algorithms more general than~\eqref{eq:tobac}, allowing simple constraint sets $X_0$ to be handled. We recall that the projection onto a nonempty convex closed set $X_0$ satisfies:
\begin{property}
   For all $\vect x \in \mathbb R^n$ and $\vect y \in X_0$, we have
   \begin{equation}\label{eq:propfacforte1}
      \| \proj_{X_0}( \vect x ) - \vect y \| \leq \| \vect x - \vect y \|\text.
   \end{equation}
\end{property}



\subsection{Convergence Results}

We introduce some simplifying notations:
\begin{itemize}
   \item $f_i^*$ is the optimal value of $f_i$ over $X_i$ for $i \in \{ 0, 1 \}$ where;
   \item $X_0$ is given and $X_{i + 1} := \{ \vect x \in X_i : f_i( \vect x )
   = f_i^*\}$ for $i \in \{ 0, 1 \}$;
   \item $[ x ]_+ := \max\{ 0, x \}$;
   \item $d_X( \vect x ) := \| \vect x - \mathcal P_X( \vect x ) \|$.
\end{itemize}

Our first result shows convergence of the iterates to the set of minimizers of $f_0$ over $X_0$. We next prove convergence to the set of minimizers of $f_1$ over $X_1$. Both of these preliminary results contain certain technical and some apparently strong hypothesis. We subsequently weaken and clarify such \emph{ad hoc} requirements in order to obtain our main results.

\begin{proposition}\label{prop:CX}
   Assume that $X_1 \neq \emptyset$, $X_1$ is bounded (or $\{ \vect x_{k} \}$ is bounded) $\sum_{i = 0}^\infty \lambda_k = \infty$, $\mathcal O_{f_0}$ and $\mathcal O_{f_1}$ satisfy Property~\ref{prop:propopt1}, $\mathcal O_{f_1}$ satisfy also Property~\ref{prop:propoptbound}, $f_1( \vect x_{k + 2 / 3} ) \geq \underline f > -\infty$, $\| \vect x_{k} - \vect x_{k + 1 / 3} \| \to 0$, $\rho_0( \lambda_k, k ) \to 0$, $\rho_1( \mu_k, k ) \leq \overline\rho_1 < \infty$, $\mu_k \to 0$, and $\mu_k / \lambda_k \to 0$. Suppose also that there exists $M$ such that $\forall k \in \mathbb N$ there is $\vect v_k \in \partial f_0( \vect x_{k} )$ for which $\| \vect v_k \| < M$, then we have
   \begin{equation*}
      \lim_{k\to\infty}d_{X_1}( \vect x_k ) = 0\text.
   \end{equation*}
\end{proposition}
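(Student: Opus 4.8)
The plan is to track the squared distance from the iterates to a fixed minimizer $\vect x^\star \in X_1$ and show that it behaves like a quasi-Fejér sequence driven by the quantity $f_0(\vect x_{k+1/3}) - f_0^\star$, with all other terms summable or controllable. First I would fix $\vect x^\star \in X_1$ (which satisfies $f_0(\vect x^\star) = f_0^\star$ and $\vect x^\star \in X_0$) and apply Property~\ref{prop:propopt1} to the $\mathcal O_{f_0}$-step with $\vect y = \vect x^\star$, then Property~\ref{prop:propopt1} to the $\mathcal O_{f_1}$-step (again with $\vect y = \vect x^\star$, using $f_1(\vect x^\star) = f_1^\star \geq \underline f$ only at the very end; actually here we just keep $f_1(\vect x^\star)$ as a constant), and finally~\eqref{eq:propfacforte1} for the projection. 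Chaining these three inequalities gives something of the form
\begin{multline*}
   \| \vect x_{k+1} - \vect x^\star \|^2 \leq \| \vect x_k - \vect x^\star \|^2 - \beta\lambda_k\bigl( f_0(\vect x_{k+1/3}) - f_0^\star \bigr)\\
   {} - \beta\mu_k\bigl( f_1(\vect x_{k+2/3}) - f_1(\vect x^\star) \bigr) + \lambda_k\rho_0(\lambda_k,k) + \mu_k\rho_1(\mu_k,k)\text.
\end{multline*}
The middle $f_1$ term is the one requiring care: using Property~\ref{prop:propoptbound} for $\mathcal O_{f_1}$ and the subgradient bound $\|\vect v_k\| < M$ together with $\|\vect x_k - \vect x_{k+1/3}\| \to 0$, I would bound $f_1(\vect x_{k+2/3}) - f_1(\vect x^\star)$ from below by something like $-c(\|\vect x_k - \vect x^\star\| + 1)$ — but since $X_1$ (or $\{\vect x_k\}$) is bounded, this is just a bounded perturbation, so $\mu_k$ times it is $O(\mu_k)$, and $\sum \mu_k = \infty$ does not hurt because we only need summability after multiplication; more precisely $\mu_k \cdot O(1)$ need not be summable, so instead I would absorb it using $\mu_k/\lambda_k \to 0$: write $\mu_k \cdot O(1) = \lambda_k \cdot (\mu_k/\lambda_k) O(1)$ and treat it the same way as the $\rho_0$ error, i.e.\ as $\lambda_k$ times a vanishing quantity.

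The key consequence is an inequality of the shape
\begin{equation*}
   a_{k+1} \leq a_k - \beta\lambda_k\bigl( f_0(\vect x_{k+1/3}) - f_0^\star \bigr) + \lambda_k\delta_k\text,
\end{equation*}
where $a_k := \|\vect x_k - \vect x^\star\|^2 \geq 0$ and $\delta_k := \rho_0(\lambda_k,k) + (\mu_k/\lambda_k)\rho_1(\mu_k,k)/(\text{const}) + (\mu_k/\lambda_k)O(1) \to 0$. Since each term $f_0(\vect x_{k+1/3}) - f_0^\star \geq 0$ (as $\mathcal P_{X_0}$ is never applied before evaluating, one must be slightly careful: actually $\vect x_{k+1/3} = \mathcal O_{f_0}(\lambda_k,\vect x_k)$ need not lie in $X_0$, so $f_0(\vect x_{k+1/3}) - f_0^\star$ could be negative — I would handle this by instead taking $\vect y = \mathcal P_{X_0}(\vect x_{k+1/3})$ or, more cleanly, by using Proposition~\ref{prop:oneIsOther}-type reasoning so that the decrease term becomes $f_0(\mathcal P_{X_0}(\cdot)) - f_0^\star \geq 0$; alternatively note $\|\vect x_{k+1}-\vect x^\star\| \le \|\vect x_{k+2/3}-\vect x^\star\|$ and redo the $f_0$ step targeting the projected point). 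Granting nonnegativity of the decrease term, a standard lemma on quasi-Fejér / almost-supermartingale sequences (summing the telescoped inequality, using $a_k \geq 0$) yields $\sum_k \lambda_k\bigl( f_0(\vect x_{k+1/3}) - f_0^\star \bigr) < \infty$ together with convergence of $a_k$. Combined with $\sum_k \lambda_k = \infty$, this forces $\liminf_k \bigl( f_0(\vect x_{k+1/3}) - f_0^\star \bigr) = 0$, and then boundedness plus continuity of $f_0$ promotes this to: every cluster point of $\{\vect x_{k+1/3}\}$ lies in $X_1$. Finally, $\|\vect x_k - \vect x_{k+1/3}\| \to 0$ transfers this to $\{\vect x_k\}$, and the convergence of $a_k$ for \emph{every} $\vect x^\star \in X_1$ upgrades the $\liminf$ statement to $d_{X_1}(\vect x_k) \to 0$ via the usual Opial-type argument (if some subsequence stayed away from $X_1$, pick a cluster point $\vect z \in X_1$ of it, set $\vect x^\star = \vect z$, and derive a contradiction with $a_k \to \|\vect x_k - \vect z\|^2$ along that subsequence going to $0$).

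The main obstacle I anticipate is \textbf{not} the Fejér machinery but bookkeeping the error terms so that the coefficient of the bad $f_1$-contribution is genuinely $\lambda_k \cdot o(1)$ rather than merely $O(\mu_k)$; this is exactly where the hypothesis $\mu_k/\lambda_k \to 0$ (as opposed to plain $\mu_k \to 0$) is essential, and getting the lower bound on $f_1(\vect x_{k+2/3}) - f_1(\vect x^\star)$ uniform in $k$ requires the boundedness of $\{\vect x_k\}$ (hence of $\{\vect x_{k+1/3}\}$ and $\{\vect x_{k+2/3}\}$, using Property~\ref{prop:propoptbound} and $\mu_k \to 0$) and a subgradient of $f_1$ bounded on that bounded set — which is automatic for finite convex functions on $\mathbb R^n$. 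A secondary technical point is the sign of the $f_0$-decrease term noted above; the cleanest fix is to apply the Property~\ref{prop:propopt1} inequality for the $f_0$ step with target $\vect y = \vect x^\star$ but then invoke the projection nonexpansiveness in the form $\|\vect x_{k+1} - \vect x^\star\| \le \|\vect x_{k+2/3} - \vect x^\star\|$ and, for the decrease, pass through the bound of Proposition~\ref{prop:oneIsOther} so that what appears is $f_0\bigl(\mathcal O_{f_0}(\lambda_k,\vect x_k)\bigr) - f_0(\vect x^\star)$, which combined with a further comparison to $f_0^\star$ using that $\vect x_{k+1/3}$ is close to $X_0$ gives the required nonnegativity up to an $o(1)$ error absorbed into $\delta_k$.
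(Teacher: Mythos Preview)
The genuine gap is precisely where you say you do \emph{not} anticipate trouble: the Fej\'er machinery. After arriving at $a_{k+1} \leq a_k - \beta\lambda_k b_k + \lambda_k\delta_k$ with $b_k \geq 0$ and $\delta_k \to 0$, you invoke the quasi-Fej\'er / almost-supermartingale lemma to obtain both $\sum_k \lambda_k b_k < \infty$ and convergence of $\{a_k\}$. But that lemma requires the error term to be \emph{summable}, $\sum_k \lambda_k\delta_k < \infty$, and the hypotheses do not give this: you only have $\rho_0(\lambda_k,k) \to 0$, $\rho_1 \leq \overline\rho_1$, $\mu_k/\lambda_k \to 0$, hence merely $\delta_k \to 0$, while $\sum_k\lambda_k = \infty$ (e.g.\ $\lambda_k = \delta_k = k^{-1/2}$ yields $\sum\lambda_k\delta_k = \infty$). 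Without convergence of $a_k$ your final Opial step collapses, and from the recursion alone you can only extract $\liminf_k b_k = 0$, a subsequential statement. (As a side remark: for the $f_1$ term you can skip the subgradient detour and simply use the hypothesis $f_1(\vect x_{k+2/3}) \geq \underline f$, which gives $-\beta\mu_k(f_1(\vect x_{k+2/3}) - f_1(\vect y)) \leq \beta\mu_k N$ with $N := f_1(\vect y) - \underline f$; this is exactly why that hypothesis is there.)

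The paper avoids Fej\'er/Opial entirely by a two-phase $\delta$-argument that is robust to non-summable $\lambda_k\cdot o(1)$ errors. It introduces the level-set gauge $\Delta_1(\delta) := \sup\{\, d_{X_1}(\vect x_k) : f_0(\vect x_k) \leq f_0^* + \delta \,\}$ (finite under either boundedness assumption) and shows, for each fixed $\delta > 0$ and all $k$ past some $k_0$: whenever $d_{X_1}(\vect x_k) \geq \Delta_1(\delta)$ the one-step inequality forces $d_{X_1}(\vect x_{k+1})^2 \leq d_{X_1}(\vect x_k)^2 - \beta\lambda_k\delta/3$, so (by $\sum\lambda_k = \infty$) this regime terminates; and once $d_{X_1}(\vect x_k) < \Delta_1(\delta)$, the crude bound $d_{X_1}(\vect x_{k+1}) \leq d_{X_1}(\vect x_k) + \|\vect x_k - \vect x_{k+2/3}\|$ together with $\|\vect x_k - \vect x_{k+2/3}\| \leq \delta$ keeps $d_{X_1}$ below $\Delta_1(\delta) + \delta$ for all later iterates. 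Sending $\delta \downarrow 0$ then gives $d_{X_1}(\vect x_k) \to 0$.
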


\begin{proof}
   First notice that $\mu_k \to 0$ and Property~\ref{prop:propoptbound} imply $\| \vect x_{k + 1 / 3} - \vect x_{k + 2 / 3} \| \to 0$. Then we take into consideration the non-expansiveness of the projection and of  Property~\ref{prop:propopt1} of $\mathcal O_{f_0}$ and $\mathcal O_{f_1}$, there holds, for $\vect y \in X_1$:
   \begin{equation}\label{eq:lemmCXaux1}
      \begin{split}
         \| \vect x_{k + 1} - \vect y \|^2 & {} \leq \| \vect x_{k + 2 / 3} - \vect y \|^2 \\
         &{}\leq \| \vect x_{k + 1 / 3} - \vect y \|^2 - \beta\mu_k\bigl( f_1( \vect x_{k + 2 / 3} ) - f_1( \vect y ) \bigr) + \mu_k\rho_1( \mu_k, k ) \\
         &{} \leq \| \vect x_{k} - \vect y \|^2 - \beta\lambda_k\bigl( f_0( \vect x_{k + 1 / 3} ) - f_0^* \bigr) + \lambda_k\rho_0( \lambda_k, k ) \\
         & \qquad\qquad\qquad\qquad {} - \beta\mu_k\bigl( f_1( \vect x_{k + 2 / 3} ) - f_1( \vect y ) \bigr) + \mu_k\rho_1( \mu_k, k )\text.
      \end{split}
   \end{equation}
   Denote $N = f_1( \vect y ) - \underline f$, thereby simplifying the above expression to:
   \begin{multline}\label{eq:approx_f0_inter}
      \| \vect x_{k + 1} - \vect y \|^2 \leq \| \vect x_{k} - \vect y \|^2 - \beta\lambda_k\bigl( f_0( \vect x_{k + 1 / 3} ) - f_0^* \bigr) \\
      {} + \lambda_k\rho_0( \lambda_k, k ) + \mu_k\bigl( \beta N + \rho_1( \mu_k, k ) \bigr)\text.
   \end{multline}
   Then, the boundedness of $\partial f_0( \vect x_{k} )$ leads to
   \begin{equation*}
      f_0( \vect x_{k + 1 / 3} ) \geq f( \vect x_{k} ) - M\| \vect x_{k + 1 / 3} - \vect x_{k} \|\text,
   \end{equation*}
   which together with~\eqref{eq:approx_f0_inter} gives
   \begin{multline}\label{eq:approx_f0}
      \| \vect x_{k + 1} - \vect y \|^2 \leq \| \vect x_{k} - \vect y \|^2 - \beta\lambda_k\bigl( f_0( \vect x_{k} ) - f_0^* \bigr) \\
      {} + \lambda_k\bigl( \rho_0( \lambda_k, k ) + \beta M\| \vect x_{k + 1 / 3} - \vect x_{k} \| \bigr) + \mu_k\bigl( \beta N + \rho_1( \mu_k, k ) \bigr)\text.
   \end{multline}

   We shall denote, for $\delta \geq 0$:
   \begin{equation*}
      X_1^\delta := \{ \vect x_k : f_0( \vect x_k ) \leq f_0^* + \delta \}.
   \end{equation*}
   Notice that if $X_1$ is bounded (or $\{ \vect x_{k} \}$ is bounded), then $X_1^\delta$ is bounded. Therefore, the following quantity is well defined:
   \begin{equation*}
      \Delta_1( \delta ) := \sup_{\vect x \in X_1^\delta} d_{X_1}( \vect x )\text.
   \end{equation*}
   Furthermore, we have
   \begin{equation*}
      \lim_{\eta \to 0} \Delta_1( \delta + \eta ) = \Delta_1( \delta )\quad\text{and}\quad \Delta_1( 0 ) = 0\text.
   \end{equation*}

   Let $\delta$ be any positive real number and consider, with $\rho_1( \mu_k, k ) \leq \overline\rho_1$, $\mu_k/\lambda_k \to 0$, $\| \vect x_{k + 1 / 3} - \vect x_{k} \| \to 0$, and $\rho_0( \lambda_k, k ) \to 0$ in mind, that $k_0$ is large enough such that $k \geq k_0$ implies
   \begin{equation}\label{eq:smallness_f0}
      \rho_0( \lambda_k, k ) + \beta M\| \vect x_{k + 1 / 3} - \vect x_{k} \| < \beta\frac{\delta}{3}, \quad\text{and}\quad
      \frac{\mu_k}{\lambda_k}\bigl( \beta N + \rho_1( \mu_k, k ) \bigr) < \beta\frac{\delta}{3}\text.
   \end{equation}
   Then, two situations may occur:
   \begin{enumerate}
      \item $d_{X_1}( \vect x_k ) \geq \Delta_1( \delta )$;\label{item:lemmCXcase1}
      \item $d_{X_1}( \vect x_k ) <    \Delta_1( \delta )$.
   \end{enumerate}

   Let us first suppose that Case~\ref{item:lemmCXcase1} holds, that is $f_0( \vect x_{k} ) - f_0^* \geq \delta$. Then, for $k \geq k_0$, from~\eqref{eq:approx_f0}~and~\eqref{eq:smallness_f0} we get:
   \begin{equation*}
      \| \vect x_{k + 1} - \vect y \|^2 \leq \| \vect x_{k} - \vect y \|^2 - \beta\lambda_k\frac\delta3.
   \end{equation*}
   In particular,
   \begin{equation*}
      d_{X_1}( \vect x_{k + 1} )^2 \leq \| \vect x_{k + 1} - \proj_{X_1}( \vect x_k ) \|^2 \leq d_{X_1}( \vect x_k )^2 - \beta\lambda_k\frac\delta3\text.
   \end{equation*}
   Therefore, since $\sum_{k = 0}^\infty\lambda_k = \infty$, there must exist an arbitrarily large $k_1 \geq k_0$ such that $d_{X_1}( \vect x_k ) < \Delta_1( \delta )$.

   Now, let us notice that, because of~\eqref{eq:propfacforte1}
   \begin{equation}\label{eq:nextdistbound}
      \begin{split}
         d_{X_1}( \vect x_{k + 1} ) &{}\leq \| \vect x_{k + 1} - \proj_{X_1}( \vect x_{k} ) \| \\
         & {}\leq \| \vect x_{k + 2 / 3} - \proj_{X_1}( \vect x_{k} ) \| \\
         & {}\leq d_{X_1}( \vect x_k ) + \| \vect x_{k} - \vect x_{k + 2 / 3} \|\text.
      \end{split}
   \end{equation}
   Given the hypothesis, we may assume that $k_0$ is large enough such that, in addition to~\eqref{eq:smallness_f0}, we have also
   \begin{equation*}
      \| \vect x_{k} - \vect x_{k + 2 / 3} \| \leq \delta.
   \end{equation*}
   Therefore, for $k > k_1$, there holds:
   \begin{equation*}
      d_{X_1}( \vect x_{k} ) \leq \Delta_1( \delta ) + \delta.
   \end{equation*}
   Since $\delta > 0$ was arbitrary and $\lim_{\delta \to 0}\Delta_1( \delta ) = 0$, the claim is proven.\qed
\end{proof}

%
%
%


\begin{proposition}\label{prop:CX2}
   Assume $X_2 \neq \emptyset$, $X_2$ is bounded (or $\{ \vect x_{k} \}$ is bounded), that $\mu_k \to 0$, $\sum_{i = 0}^\infty \mu_k = \infty$, $\mathcal O_{f_0}$ and $\mathcal O_{f_1}$ satisfy Property~\ref{prop:propopt1}, $\mathcal O_{f_1}$ also satisfies Property~\ref{prop:propoptbound}, $\lambda_k [ f_0^* - f_0( \vect x_{k + 1 / 3} ) ]_+ / \mu_k \to 0$, $d_{X_1}( \vect x_k ) \to 0$, $\| \vect x_{k} - \vect x_{k + 1 / 3} \| \to 0$, $\lambda_k\rho_0( \lambda_k, k ) / \mu_k \to 0$ and $\rho_1( \mu_k, k ) \to 0$. Suppose also that there exists an $M$ such that $\forall k \in \mathbb N$ there are $\vect v_k \in \partial f_0( \vect x_k )$ and $\vect w_k \in \partial f_1\bigl( \proj_{X_0}( \vect x_k ) \bigr)$ for which $\| \vect v_k \| < M$ and $\| \vect w_k \| \leq M$, then we have
   \begin{equation*}
      \lim_{k\to\infty}d_{X_2}( \vect x_k ) = 0\text.
   \end{equation*}
\end{proposition}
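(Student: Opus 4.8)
The plan is to imitate the proof of Proposition~\ref{prop:CX} one level up, with $X_1,X_2,f_1$ and the stepsize $\mu_k$ now playing the roles of $X_0,X_1,f_0$ and $\lambda_k$: I will show that whenever $\vect x_k$ is far from $X_2$ a genuine contraction of $d_{X_2}$ takes place along the $\mathcal O_{f_1}$ step, while the $\mathcal O_{f_0}$ step contributes only an error that is negligible relative to $\mu_k$. Fix $\vect y \in X_2$, so that $f_0(\vect y) = f_0^*$ and $f_1(\vect y) = f_1^*$. Chaining the nonexpansiveness of $\proj_{X_0}$ (legitimate since $\vect y \in X_2 \subseteq X_0$) with Property~\ref{prop:propopt1} applied to $\mathcal O_{f_1}$ and then to $\mathcal O_{f_0}$, exactly as in~\eqref{eq:lemmCXaux1}, gives
\begin{multline*}
   \| \vect x_{k + 1} - \vect y \|^2 \leq \| \vect x_{k} - \vect y \|^2 - \beta\lambda_k\bigl( f_0( \vect x_{k + 1 / 3} ) - f_0^* \bigr) + \lambda_k\rho_0( \lambda_k, k )\\
   {} - \beta\mu_k\bigl( f_1( \vect x_{k + 2 / 3} ) - f_1^* \bigr) + \mu_k\rho_1( \mu_k, k )\text.
\end{multline*}

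For the $f_0$ term I bound $-\beta\lambda_k\bigl(f_0(\vect x_{k+1/3}) - f_0^*\bigr) \leq \beta\lambda_k\zerocrop{f_0^* - f_0(\vect x_{k+1/3})}$, which is $o(\mu_k)$ by hypothesis. For the $f_1$ term the output $\vect x_{k+2/3}$ must be compared with $f_1$ on $X_1$: using $\vect w_k \in \partial f_1\bigl(\proj_{X_0}(\vect x_k)\bigr)$, and $\proj_{X_0}(\vect x_k) = \vect x_k$ for $k \geq 1$, convexity gives $f_1(\vect x_{k+2/3}) \geq f_1(\vect x_k) - M\tau_k$ with $\tau_k := \|\vect x_{k+2/3} - \vect x_k\| \leq \mu_k\gamma + \|\vect x_{k+1/3} - \vect x_k\| \to 0$ (Property~\ref{prop:propoptbound} for $\mathcal O_{f_1}$ together with $\mu_k \to 0$ and $\|\vect x_k - \vect x_{k+1/3}\| \to 0$); and a subgradient of $f_1$ at $\proj_{X_1}(\vect x_k)$ — which, since $\{\proj_{X_1}(\vect x_k)\}$ is bounded and $\partial f_1$ is locally bounded, may be assumed bounded by $M$ after enlarging $M$ — then gives $f_1(\vect x_k) \geq f_1\bigl(\proj_{X_1}(\vect x_k)\bigr) - Md_{X_1}(\vect x_k)$. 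Writing $\phi_k := f_1\bigl(\proj_{X_1}(\vect x_k)\bigr) - f_1^* \geq 0$, the above collapses to $\|\vect x_{k+1} - \vect y\|^2 \leq \|\vect x_k - \vect y\|^2 - \beta\mu_k\phi_k + E_k$ with $E_k := \beta\lambda_k\zerocrop{f_0^* - f_0(\vect x_{k+1/3})} + \lambda_k\rho_0(\lambda_k,k) + \mu_k\bigl(\beta Md_{X_1}(\vect x_k) + \beta M\tau_k + \rho_1(\mu_k,k)\bigr)$. Every summand of $E_k/\mu_k$ tends to $0$ — precisely the content of the quotient hypotheses $\lambda_k\zerocrop{f_0^* - f_0(\vect x_{k+1/3})}/\mu_k \to 0$ and $\lambda_k\rho_0(\lambda_k,k)/\mu_k \to 0$, together with $d_{X_1}(\vect x_k) \to 0$, $\tau_k \to 0$, $\rho_1(\mu_k,k) \to 0$ — so for any fixed $\delta > 0$ there is $k_0$ with $E_k < \beta\mu_k\delta$ for $k \geq k_0$; taking $\vect y = \proj_{X_2}(\vect x_k)$ we get, for $k \geq k_0$, $d_{X_2}(\vect x_{k+1})^2 \leq d_{X_2}(\vect x_k)^2 - \beta\mu_k(\phi_k - \delta)$.

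Now I run the dichotomy of Proposition~\ref{prop:CX}. For $s \geq 0$ set $\Delta_2(s) := \sup\{ d_{X_2}(\vect z) : \vect z \in X_1,\ f_1(\vect z) \leq f_1^* + s \}$ (restricted, if needed, to a fixed ball containing $\{\proj_{X_1}(\vect x_k)\}$); under the boundedness hypothesis this is finite, it is nondecreasing, $\Delta_2(0) = 0$, and $\Delta_2(s) \to 0$ as $s \to 0^+$ (a compactness and lower-semicontinuity argument using that the relevant sublevel sets of $f_1$ over $X_1$ are bounded). Enlarge $k_0$ so that also $d_{X_1}(\vect x_k) \leq \delta$ and $\tau_k \leq \delta$ for $k \geq k_0$. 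If $d_{X_2}(\vect x_k) > \Delta_2(2\delta) + \delta$, then $d_{X_2}\bigl(\proj_{X_1}(\vect x_k)\bigr) \geq d_{X_2}(\vect x_k) - d_{X_1}(\vect x_k) > \Delta_2(2\delta)$, so by the very definition of $\Delta_2$ we must have $\phi_k > 2\delta$, and the last inequality of the previous paragraph yields $d_{X_2}(\vect x_{k+1})^2 \leq d_{X_2}(\vect x_k)^2 - \beta\mu_k\delta$; in particular $d_{X_2}$ does not increase. Otherwise $d_{X_2}(\vect x_k) \leq \Delta_2(2\delta) + \delta$, and nonexpansiveness of $\proj_{X_0}$ (equation~\eqref{eq:propfacforte1}) gives $d_{X_2}(\vect x_{k+1}) \leq \|\vect x_{k+2/3} - \proj_{X_2}(\vect x_k)\| \leq d_{X_2}(\vect x_k) + \tau_k \leq \Delta_2(2\delta) + 2\delta$. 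Consequently, once $d_{X_2}(\vect x_k) \leq \Delta_2(2\delta) + 2\delta$ with $k \geq k_0$, this bound persists for all larger indices, while $\sum_k \mu_k = \infty$ rules out $d_{X_2}(\vect x_k) > \Delta_2(2\delta) + 2\delta$ holding for every large $k$ (summing $\beta\mu_k\delta$ would force $d_{X_2}^2$ negative). Hence $\limsup_k d_{X_2}(\vect x_k) \leq \Delta_2(2\delta) + 2\delta$, and letting $\delta \to 0^+$ concludes.

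The crux — and the only place this argument genuinely departs from that of Proposition~\ref{prop:CX} — is that $\vect x_k$ is guaranteed only to lie in $X_0$, not in $X_1$, so $f_1(\vect x_k)$ and $f_1(\vect x_{k+2/3})$ need not dominate $f_1^*$ and optimality of $f_1$ at the relevant point no longer forces $d_{X_2}(\vect x_k) = 0$. That is why one inserts the auxiliary point $\proj_{X_1}(\vect x_k)$, pays the discrepancy $Md_{X_1}(\vect x_k)$ — harmless exactly because Proposition~\ref{prop:CX} has already been invoked to obtain $d_{X_1}(\vect x_k) \to 0$ — and defines $\Delta_2$ over $X_1$, not over the iterates, so that $\Delta_2(0) = 0$. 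The remaining effort is the bookkeeping that $E_k/\mu_k \to 0$, which is the purpose of the several quotient hypotheses, and the verification that the uniform subgradient bounds and the boundedness assumption make all of the auxiliary quantities ($\tau_k$, the subgradient of $f_1$ at $\proj_{X_1}(\vect x_k)$, and the function $\Delta_2$) well behaved; I expect this, rather than any single conceptual step, to be the most laborious part of the argument.
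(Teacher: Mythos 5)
Your proof is correct and takes essentially the same route as the paper's: the same chained inequality from the projection step and Property~\ref{prop:propopt1} applied to both operators, the same subgradient bounds linking $f_1( \vect x_{k + 2/3} )$, $f_1( \vect x_k )$ and $f_1\bigl( \proj_{X_1}( \vect x_k ) \bigr)$ (paying the $M d_{X_1}( \vect x_k )$ discrepancy), a modulus $\Delta_2$, and a dichotomy closed by $\sum_k \mu_k = \infty$. The differences are only organizational — you fold the $M d_{X_1}( \vect x_k )$ term into the error $E_k$, run the dichotomy on $d_{X_2}( \vect x_k )$ against a threshold instead of on $f_1( \vect x_k )$, define $\Delta_2$ over sublevel sets of $f_1$ on $X_1$ (so $\Delta_2( 0 ) = 0$ is immediate), and make the persistence of the bound explicit — which, if anything, tightens the bookkeeping of the paper's own argument.
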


\begin{proof}
   Notice for later reference that just like in Proposition~\ref{prop:CX}, the hypotheses imply that $\| \vect x_{k + 1 / 3} - \vect x_{k + 2 / 3} \| \to 0$. Now, if we use~\eqref{eq:lemmCXaux1} with $\vect y \in X_2 \subset X_1$, we get:
   \begin{multline}\label{eq:diffestX1X2}
      \| \vect x_{k + 1} - \vect y \|^2 \leq \| \vect x_{k} - \vect y \|^2 - \beta\mu_k\bigl( f_1( \vect x_{k + 2 / 3} ) - f_1( \vect y ) \bigr) \\
      {} + \lambda_k\rho_0( \lambda_k, k ) + \mu_k\rho_1( \mu_k, k ) + \beta\lambda_k[ f_0^* - f( \vect x_{k + 1 / 3} ) ]_+\text.
   \end{multline}
   Now let $\tilde \nabla f_1( \vect x_k ) \in \partial f_1( \vect x_k )$ and then notice that convexity of $f_1$, Cauchy-Schwarz inequality and the boundedness assumption on $\partial f_1( \vect x_k )$ lead to:
   \begin{equation}\label{eq:f0bound}
      \begin{split}
         f_1( \vect x_{k + 2 / 3} ) &{} \geq f_1( \vect x_{k} ) + \tilde \nabla f_1( \vect x_k )^T( \vect x_{k + 2 / 3} - \vect x_k )\\
                            &{} \geq f_1( \vect x_{k} ) - \|\tilde \nabla f_1( \vect x_k )\|\| \vect x_{k + 2 / 3} - \vect x_k \|\\
                            &{} \geq f_1( \vect x_{k} ) - M\| \vect x_{k + 2 / 3} - \vect x_k \|.
      \end{split}
   \end{equation}
   Then, using~\eqref{eq:f0bound} in~\eqref{eq:diffestX1X2} it is possible to obtain:
   \begin{multline}\label{eq:diffestf1Final}
      \| \vect x_{k + 1} - \vect y \|^2 \leq \| \vect x_{k} - \vect y \|^2 - \beta\mu_k\bigl( f_1( \vect x_{k} ) - f_1^* \bigr) + \lambda_k\rho_0( \lambda_k, k )\\
      {}+ \mu_k\rho_1( \mu_k, k ) + \beta\lambda_k[ f_0^* - f_0( \vect x_{k + 1 / 3} ) ]_+ + \beta\mu_kM\| \vect x_{k + 2 / 3} - \vect x_{k} \|\text.
   \end{multline}

   Similarly to the $\Delta_1$ notation introduced above, we will denote, for $\delta \geq 0$:
   \begin{equation*}
      X_2^\delta := \{ \vect x_k : f_1\bigl( \proj_{X_1}( \vect x_{k} ) \bigr) \leq f_1^* + \delta \}.
   \end{equation*}
   Notice that if $X_2$ is bounded (or $\{ \vect x_{k} \}$ is bounded) and $d_{X_1}( \vect x_{k} )$ is also bounded, then $X_2^\delta$ is bounded. Therefore, the following quantity is well defined:
   \begin{equation*}
      \Delta_2( \delta ) := \sup_{\vect x \in X_2^\delta} d_{X_2}( \vect x )\text.
   \end{equation*}
   Furthermore, we have
   \begin{equation*}
      \lim_{\eta \to 0} \Delta_2( \delta + \eta ) = \Delta_2( \delta )\quad\text{and}\quad \Delta_2( 0 ) = 0\text.
   \end{equation*}

   Given the hypothesis, for any fixed $\delta > 0$, there is $k_0$ such that $k \geq k_0$ implies that
   \begin{multline}\label{eq:smallness_f1}
      \frac{\lambda_k\rho_0( \lambda_k, k )}{\mu_k} < \beta\frac{\delta}{5}, \quad
      \rho_1( \mu_k, k ) < \beta\frac{\delta}{5}, \quad
      \frac{\lambda_k[ f_0^* - f( \vect x_{k + 1 / 3} ) ]_+}{\mu_k} < \frac{\delta}{5},\\
      \text{and} \quad \mu_kM\| \vect x_{k + 2 / 3} - \vect x_{k} \| < \frac{\delta}{5}\text.
   \end{multline}

   We from now on assume $k > k_0$ and split in two different possibilities:
   \begin{enumerate}
      \item $f_1( \vect x_k ) >    f_1^* + \delta$;\label{case:lemmX2_1}
      \item $f_1( \vect x_k ) \leq f_1^* + \delta$.\label{case:lemmX2_2}
   \end{enumerate}

   We start by analyzing Case~\ref{case:lemmX2_1}. Using~\eqref{eq:smallness_f1} in~\eqref{eq:diffestf1Final} we get, for $\vect y \in X_2$:
   \begin{equation*}
      \| \vect x_{k + 1} - \vect y \|^2 < \| \vect x_{k} - \vect y \|^2 - \beta\mu_k\frac\delta5.
   \end{equation*}
   In particular:
   \begin{equation*}
      d_{X_2}( \vect x_{k + 1} )^2 \leq \| \vect x_{k + 1} - \proj_{X_2}( \vect x_k ) \|^2 < d_{X_2}( \vect x_{k} )^2 - \beta\mu_k\frac\delta5.
   \end{equation*}
   Because of $\sum_{k = 0}^\infty \mu_k = \infty$, this inequality means that there is an arbitrarily large $k_1$ such that $f_1( \vect x_{k_1} ) \leq f_1^* + \delta$.

   Let us then focus on Case~\eqref{case:lemmX2_2}. We first notice that the assumed boundedness of $\partial f_1\bigl( \proj_{X_1}( \vect x_k ) \bigr)$ leads to
   \begin{equation*}
      f_1\bigl( \proj_{X_1}( \vect x_k ) \bigr) \leq f_1( \vect x_k ) + Md_{X_1}( \vect x_k ).
   \end{equation*}
   Therefore, $f_1( \vect x_k ) \leq f_1^* + \delta$ implies
   \begin{equation*}
      \vect x_{k} \in X_2^{\delta + Md_{X_1}( \vect x_k )}.
   \end{equation*}
   Thus,~\eqref{eq:nextdistbound} now reads
   \begin{equation*}
      d_{X_2}( \vect x_{k + 1} ) \leq \Delta_2\bigl( \delta + Md_{X_1}( \vect x_k ) \bigr) + \| \vect x_{k + 2 / 3} - \vect x_{k} \|.
   \end{equation*}

   Then, because we have assumed $d_{X_1}( \vect x_k ) \to 0$, $\| \vect x_{k} - \vect x_{k + 1 / 3} \| \to 0$, and $\| \vect x_{k + 1 / 3} - \vect x_{k + 2 / 3} \| \to 0$, we can recall $\lim_{\eta \to 0}\Delta_2( \delta + \eta ) = \Delta_2( \delta )$, so that the argumentation above leads to the conclusion that
   \begin{equation*}
      \limsup_{k \to \infty}d_{X_2}( \vect x_k ) \leq \Delta_2( \delta ).
   \end{equation*}
   Finally, because $\delta > 0$ was arbitrary and $\lim_{\delta \to 0} \Delta_2( \delta ) = 0$, we have just proven the claimed result.\qed
\end{proof}

We now present two different algorithms and prove their convergence based on the above general results. Next section contains numerical experimentation regarding some of these methods in four different bilevel models arising in high-resolution micro-tomographic image reconstruction from synchrotron illumination.

\subsection{An Algorithm for Lipschitz-Differentiable Primary Objective Functions}\label{subsec:algolip}

In this Subsection we suppose $f_0$ in the bilevel optimization problem~\eqref{eq:bilevel} is differentiable with uniformly bounded and Lipschitz continuous gradient. For this kind of problem, we will consider Algorithm~\ref{algo:fiba}, which we name Fast Iterative Bilevel Algorithm (\textsc{fiba}). \textsc{fiba} first performs a projected gradient descent step, with a stepsize that does not change unless the magnitude of this operation is larger than a control sequence. This computation is then followed by the application of an optimality operator of the kind described by Properties~\ref{prop:propopt1}~and~\ref{prop:propoptbound}. Such optimality operator is actually applied to a perturbation of the point obtained by the projected gradient descent, in a fashion similar to the Fast Iterative Soft-Thresholding Algorithm (\textsc{fista})~\cite{bet09}, but with the magnitude of the perturbation bounded by $\mu_k\zeta_k$, where $\{ \zeta_k \}$ is a positive vanishing sequence.

\begin{algorithm}
   \begin{algorithmic}[1]
      \Require{$\vect x_{0}$, $\{\lambda_k\}$, $\{\mu_k\}$, $\{\zeta_k\}$}%
      \Statex%
      \State{Initialization: $k \leftarrow 0$, $t_0 = 1$, $\vect x_{-2 / 3} = \vect x_{0}$, $i_0 = 0$ }%
      \Statex%
      \Repeat
         \Statex%
         \State{$\vect x_{k + 1 / 3} = \proj_{X_0}\bigl( \vect x_k - \lambda_{i_k}\nabla f_0( \vect x_k ) \bigr)$}\label{line:bifaStep1}
         \If{$\| \vect x_k - \vect x_{k + 1 / 3} \| \geq \zeta_k$}\label{line:bifaStep2}
            \State{$i_{k + 1} = i_{k} + 1$}%
         \Else%
            \State{$i_{k + 1} = i_{k}$}%
         \EndIf\label{line:bifaStep3}
         \Statex%
         \State{$t_{k + 1} = \frac{1 + \sqrt{ 1 + 4t_{k}^2 }}{2}$, $\xi_k = \min\left\{ 1, \frac{\mu_k\zeta_k}{\| \vect x_{k + 1 / 3} - \vect x_{( k - 1 ) + 1 / 3} \|} \right\}$}%
         \State{$\vect y_{k + 1 / 3} = \vect x_{k + 1 / 3} + \xi_k\left( \frac{t_{k} - 1}{t_{k + 1}} \right)( \vect x_{k + 1 / 3} - \vect x_{( k - 1 ) + 1 / 3} )$}%
         \Statex%
         \State{$\vect x_{k + 2 / 3} = \mathcal O_{f_1}( \vect y_{k + 1 / 3}, \mu_k )$}%
         \State{$\vect x_{k + 1} = \proj_{X_0}( \vect x_{k + 2/ 3} )$}%
         \Statex%
         \State{$k \leftarrow k + 1$}%
         \Statex%
      \Until{convergence is reached}
   \end{algorithmic}
   \caption{Fast Iterative Bilevel Algorithm}\label{algo:fiba}
\end{algorithm}

In order to analyze convergence of Algorithm~\ref{algo:fiba} through our previous results, we first look at the simple projected gradient descent
\begin{equation*}
   \mathcal G_f( \lambda, \vect x ) := \proj_{X_0}\bigl( \vect x - \lambda \nabla f( \vect x ) \bigr)
\end{equation*}
as an instance of the optimality operators considered above. Let $L_f$ denote the Lipischtz constant of $\nabla f$. Then, if $\lambda \leq 1 / L_f$, it is possible to show (see, e.g., \cite{bet09} and references therein) that
\begin{equation}\label{eq:upperBoundLips}
   f( \vect y ) \leq f( \vect x ) + \nabla f( \vect x )^T( \vect y - \vect x ) + \frac 1{2\lambda}\|\vect y - \vect x \|^2.
\end{equation}
Let now $\iota_{X_0}$ be the indicator function:
\begin{equation}\label{eq:iota}
   \iota_{X_0}( \vect x ) :=
      \begin{cases}
         \infty & \text{if} \quad \vect x \notin X_0\\
         0      & \text{if} \quad \vect x \in X_0\text.
      \end{cases}
\end{equation}
Then, inequality~\eqref{eq:upperBoundLips} leads to
\begin{equation*}
   f( \vect y ) + \iota_{X_0}( \vect y ) \leq f( \vect x ) + \nabla f( \vect x )^T( \vect y - \vect x ) + \frac 1{2\lambda}\|\vect y - \vect x \|^2 + \iota_{X_0}( \vect y ).
\end{equation*}
Therefore,~\cite[Lemma~2.3]{bet09} can be used with $L = 1 / \lambda$, $g = \iota_{X_0}$, $\mathbf y = \vect x$ and $\mathbf x = \vect y$ in order to get, for $\vect y \in X_0$:
\begin{equation}\label{eq:fibaopt}
   \begin{split}
      2\lambda\Bigl( f( \vect y ) - f\bigl( \mathcal G_f( \lambda, \vect x ) \bigl) \Bigr) & {}\geq \| \vect x - \mathcal G_f( \lambda, \vect x ) \|^2 + 2( \vect x - \vect y )^T\bigl( \mathcal G_f( \lambda, \vect x ) - \vect x \bigr) \\
      & {} = \| \vect y - \mathcal G_f( \lambda, \vect x ) \|^2 - \| \vect y - \vect x \|^2.
   \end{split}
\end{equation}
Thus, for $\lambda \leq 1/L_f$, we can see that $\mathcal G_f( \lambda, \vect x )$ satisfies Property~\ref{prop:propopt1} with $\beta = 2$ and $\rho( \lambda, k ) \equiv 0$. Notice that the operation described at line~\ref{line:bifaStep1} of Algorithm~\ref{algo:fiba} is actually:
\begin{equation*}
   \vect x_{k + 1 / 3} = \mathcal G_{f_0}( \lambda_{i_k}, \vect x_k ).
\end{equation*}
Consequently, according to~\eqref{eq:fibaopt}, Algorithm~\ref{algo:fiba} is an instance of~\eqref{eq:algo} with an optimality operator $\mathcal O_{f_0}$ which satisfies Property~\ref{prop:propopt1} with $\rho_0( \lambda, k ) \equiv 0$, whenever $\lambda \leq 1 / L_f$ or if, which is weaker, \eqref{eq:upperBoundLips} holds with $\vect x = \vect x_{k}$ and $\vect y = \vect x_{k + 1 / 3}$. This characteristic of the error term implies that it is possible to have a precise enough operator without requiring $\lambda_{i_k} \to 0$. However, $\| \vect x_k - \vect x_{ k + 1 / 3 } \| \to 0$ would still require $\lambda_{i_k} \to 0$ if the projection on line~\ref{line:bifaStep1} of Algorithm~\ref{algo:fiba} were not performed and this is the reason why there are two projections in this method.

Now, let us recall that $\vect x_{k} \in X_0$, so that by the definition of $\mathcal G_f$ and~\eqref{eq:propfacforte1}
\begin{equation}\label{eq:gradBoundDiff}
   \| \vect x_{k} - \mathcal G_f( \lambda_{i_k}, \vect x_k ) \| \leq \lambda_{i_k}\| \nabla f( \vect x_k ) \|.
\end{equation}
Therefore, if the sequence $\{ \nabla f_0( \vect x_{k} ) \}$ is bounded, and if $\lambda_k \to 0$ and $\zeta_k \to 0$, then the procedure in lines~\ref{line:bifaStep2}--\ref{line:bifaStep3} of Algorithm~\ref{algo:fiba} implies that $\| \vect x_{k + 1 / 3} - \vect x_{k} \| \to 0$. Observe also that if $\sum_k\lambda_k = \infty$, then $\sum_k\lambda_{i_k} = \infty$ too.

Now, we consider the fact that the optimization operator for the secondary function $f_1$ is used in a perturbed point $\vect y_{k + 1 / 3}$, instead of at $\vect x_{k + 1 / 3}$. Our goal is to verify that the relevant properties of $\mathcal O_{f_1}$ are maintained. The first observation is that, given the way that $\vect y_{k + 1 / 3}$ is defined, we have
\begin{equation}\label{eq:boundmuzeta}
   \| \vect x_{k + 1 / 3} - \vect y_{k + 1 / 3} \| \leq \mu_k\zeta_k.
\end{equation}
We then define a new operator $\tilde{\mathcal O}_{f_1}$, based on $\mathcal O_{f_1}$, as follows:
\begin{equation*}
   \tilde{\mathcal O}_{f_1}( \mu, \vect x_{k + 1 / 3} ) := \mathcal O_{f_1}( \mu, \vect y_{k + 1 / 3} ).
\end{equation*}
Notice that $\tilde{\mathcal O}_{f_1}$ role is to hide the perturbation from the analysis. Also, this kind of operator is the reason why we use an iteration-dependent error term in Property~\ref{prop:propopt1}, as we will see just below.
Let us then assume that Property~\ref{prop:propopt1} holds for $O_{f_1}$, therefore:
\begin{equation}\label{eq:boundtildeo}
   \begin{split}
      \| \vect y - \tilde{\mathcal O}_{f_1}( \mu_k, \vect x_{k + 1 / 3} ) \|^2 & {}= \| \vect y - \mathcal O_{f_1}( \mu_k, \vect y_{k + 1 / 3} ) \|^2\\
      &{} \leq \| \vect y - \vect y_{k + 1 / 3} \|^2 - 2\mu_k\bigl( f( \mathcal O_{f_1}( \mu_k, \vect y_{k + 1 / 3} ) ) - f( \vect y ) \bigr)\\
      & \qquad\qquad\qquad\qquad\qquad\qquad\qquad\qquad\quad {}+ \mu_k\rho_1( \mu_k, k )\\
      &{} = \| \vect y - \vect y_{k + 1 / 3} \|^2 - 2\mu_k\bigl( f( \tilde{\mathcal O}_{f_1}( \mu_k, \vect x_{k + 1 / 3} ) ) - f( \vect y ) \bigr)\\
      & \qquad\qquad\qquad\qquad\qquad\qquad\qquad\qquad\quad {}+ \mu_k\rho_1( \mu_k, k ).\\
   \end{split}
\end{equation}
Now, by taking~\eqref{eq:boundmuzeta} into consideration, a straightforward computation leads to
\begin{equation*}
   \| \vect y - \vect y_{k + 1 / 3} \|^2 \leq \| \vect y - \vect x_{k + 1 / 3} \|^2 + \mu_k\zeta_k\left( 2\| \vect y -\vect x_{k + 1 / 3} \| + \mu_k\zeta_k \right).
\end{equation*}
Then, using the above bound in~\eqref{eq:boundtildeo} we have:
\begin{multline}\label{eq:prop1fromprop1}
   \| \vect y - \tilde{\mathcal O}_{f_1}( \mu_k, \vect x_{k + 1 / 3} ) \|^2 \leq \| \vect y - \vect x_{k + 1 / 3} \|^2 - 2\mu_k\bigl( f( \tilde{\mathcal O}_{f_1}( \mu_k, \vect x_{k + 1 / 3} ) ) - f( \vect y ) \bigr)\\
   {}+ \mu_k\bigl( \rho_1( \mu_k, k ) + \zeta_k( 2\| \vect y -\vect x_{k + 1 / 3} \| + \mu_k\zeta_k ) \bigr).
\end{multline}
That is, $\tilde{\mathcal O}_{f_1}$ satisfies Property~\ref{prop:propopt1} with $\rho_1$ replaced by
\begin{equation*}
   \tilde\rho_1( \mu_k ,k ) := \rho_1( \mu_k, k ) + \zeta_k( 2\| \vect y -\vect x_{k + 1 / 3} \| + \mu_k\zeta_k),
\end{equation*}
where we notice that the set of points $\vect y$ where Property~\ref{prop:propopt1} is applied in the convergence proofs is bounded if $\{ \vect x_k \}$ is bounded.

Given the above considerations, we are ready to provide the convergence results for Algorithm~\ref{algo:fiba}.
\begin{theorem}\label{theo:convFIBA}
   Assume $f_0$ is differentiable with Lipschitz-continuous gradient and has Lipschitz constant $L_0$. Suppose too that  $f_0$ has a bounded gradient and that $f_1$ has a bounded subgradient and $\{ f_1( \vect x_{k + 1 / 3} ) \}$ bounded from below. Assume $\{ \lambda_k \}$, $\{ \mu_k \}$ and $\{ \zeta_k \}$ are non-negative vanishing scalar sequences such that $\sum_{k = 0}^\infty \lambda_k = \infty$, $\lambda_k \leq 1/L_0$ (or each $\lambda_{i_k}$ satisfies~\eqref{eq:upperBoundLips}), $\sum_{k = 0}^\infty \mu_k = \infty$, and $\mu_k / \lambda_k \to 0$. Then, if $X_2 \neq \emptyset$, $\{\vect x_k\}$ is bounded, and $\mathcal O_{f_1}$ satisfies Properties~\ref{prop:propopt1}~and~\ref{prop:propoptbound} with $\rho_1( \mu_k, k ) \to 0$, we have
   \begin{equation*}
      \lim_{k \to \infty} d_{X_2}( \vect x_k ) = 0.
   \end{equation*}
\end{theorem}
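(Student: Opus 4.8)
The plan is to derive Theorem~\ref{theo:convFIBA} by checking that Algorithm~\ref{algo:fiba}, viewed as an instance of the abstract scheme~\eqref{eq:algo}, meets all the hypotheses of Propositions~\ref{prop:CX}~and~\ref{prop:CX2}, and then applying those two results in sequence. The correspondence is: $\mathcal O_{f_0}(\lambda_{i_k},\cdot) = \mathcal G_{f_0}(\lambda_{i_k},\cdot)$ is the projected gradient step from line~\ref{line:bifaStep1}, which by~\eqref{eq:fibaopt} satisfies Property~\ref{prop:propopt1} with $\beta = 2$ and $\rho_0(\lambda,k)\equiv 0$ under the stepsize condition $\lambda_k \le 1/L_0$ (or the weaker~\eqref{eq:upperBoundLips}); and $\mathcal O_{f_1}$ is replaced by the perturbed operator $\tilde{\mathcal O}_{f_1}$ defined above, which by~\eqref{eq:prop1fromprop1} satisfies Property~\ref{prop:propopt1} with the modified error term $\tilde\rho_1(\mu_k,k) = \rho_1(\mu_k,k) + \zeta_k(2\|\vect y - \vect x_{k+1/3}\| + \mu_k\zeta_k)$, and satisfies Property~\ref{prop:propoptbound} because $\mathcal O_{f_1}$ does and $\|\vect x_{k+1/3} - \vect y_{k+1/3}\| \le \mu_k\zeta_k$ by~\eqref{eq:boundmuzeta}.

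First I would establish the auxiliary limits that feed both propositions. Since $\{\nabla f_0(\vect x_k)\}$ is bounded and $\lambda_k,\zeta_k \to 0$, the bookkeeping of lines~\ref{line:bifaStep2}--\ref{line:bifaStep3} together with~\eqref{eq:gradBoundDiff} gives $\|\vect x_k - \vect x_{k+1/3}\| \to 0$: indeed, either $\|\vect x_k - \vect x_{k+1/3}\| < \zeta_k \to 0$ directly, or the index $i_k$ is incremented, and the latter can happen only finitely often unless $i_k\to\infty$, in which case $\lambda_{i_k}\to 0$ and $\|\vect x_k - \vect x_{k+1/3}\| \le \lambda_{i_k}\|\nabla f_0(\vect x_k)\| \to 0$. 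The same remark noted in the text gives $\sum_k \lambda_{i_k} = \infty$ from $\sum_k \lambda_k = \infty$. Property~\ref{prop:propoptbound} for $\tilde{\mathcal O}_{f_1}$ plus $\mu_k\to 0$ yields $\|\vect x_{k+1/3}-\vect x_{k+2/3}\|\to 0$, hence also $\|\vect x_k - \vect x_{k+2/3}\|\to 0$. Boundedness of $\{\vect x_k\}$ makes the set of reference points $\vect y$ bounded, so $\tilde\rho_1(\mu_k,k)\to 0$ (using $\rho_1(\mu_k,k)\to 0$, $\zeta_k\to 0$, $\mu_k\zeta_k\to 0$), and in particular $\tilde\rho_1(\mu_k,k)\le\overline\rho_1<\infty$; moreover $\lambda_k\rho_0(\lambda_k,k) \equiv 0$, so trivially $\lambda_k\rho_0(\lambda_k,k)/\mu_k \to 0$. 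Finally $\lambda_k[f_0^* - f_0(\vect x_{k+1/3})]_+/\mu_k \to 0$ follows from the fact that $\vect x_{k+1/3} = \mathcal G_{f_0}(\lambda_{i_k},\vect x_k) \in X_0$ combined with~\eqref{eq:fibaopt} applied with $\vect y\in X_1\subset X_0$ a minimizer of $f_0$ over $X_0$: that inequality and nonexpansiveness bound $f_0^* - f_0(\vect x_{k+1/3})$ above by a quantity of order $\|\vect x_k - \vect x_{k+1/3}\|\cdot\|\vect x_k - \vect y\|/\lambda_{i_k}$, which is $O(\|\nabla f_0(\vect x_k)\|\,\|\vect x_k - \vect y\|)$, bounded; multiplying by $\lambda_k/\mu_k \to 0$ gives the claim (and $\lambda_k[f_0^*-f_0(\vect x_{k+1/3})]_+/\mu_k < \delta/5$ eventually).

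Having checked everything, I would first invoke Proposition~\ref{prop:CX} — its hypotheses are exactly: $X_1\neq\emptyset$ (which follows from $X_2\neq\emptyset$), $\{\vect x_k\}$ bounded, $\sum_k\lambda_{i_k}=\infty$, Properties~\ref{prop:propopt1}/\ref{prop:propoptbound} as above, $f_1(\vect x_{k+2/3})$ bounded below (here one notes $f_1(\vect x_{k+2/3}) \ge f_1(\vect x_{k+1/3}) - M\|\vect x_{k+1/3}-\vect x_{k+2/3}\|$ by convexity and the subgradient bound, so boundedness below of $\{f_1(\vect x_{k+1/3})\}$ transfers), $\|\vect x_k - \vect x_{k+1/3}\|\to 0$, $\rho_0(\lambda_k,k)\to 0$, $\tilde\rho_1(\mu_k,k)\le\overline\rho_1$, $\mu_k\to 0$, $\mu_k/\lambda_k\to 0$, and the subgradient bound on $\partial f_0(\vect x_k)$. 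This gives $d_{X_1}(\vect x_k)\to 0$. Then I would invoke Proposition~\ref{prop:CX2} with this new information: $X_2\neq\emptyset$ and bounded (as $\{\vect x_k\}$ is bounded), $\mu_k\to 0$, $\sum_k\mu_k=\infty$, the two Properties, $\lambda_k[f_0^*-f_0(\vect x_{k+1/3})]_+/\mu_k\to 0$, $d_{X_1}(\vect x_k)\to 0$, $\|\vect x_k-\vect x_{k+1/3}\|\to 0$, $\lambda_k\rho_0(\lambda_k,k)/\mu_k\to 0$, $\tilde\rho_1(\mu_k,k)\to 0$, and the subgradient bounds on $\partial f_0(\vect x_k)$ and on $\partial f_1(\proj_{X_0}(\vect x_k))$ (the latter is the stated bounded-subgradient assumption on $f_1$). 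The conclusion $d_{X_2}(\vect x_k)\to 0$ is precisely the theorem.

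The main obstacle I anticipate is verifying $\lambda_k[f_0^* - f_0(\vect x_{k+1/3})]_+/\mu_k \to 0$: this is the one hypothesis of Proposition~\ref{prop:CX2} that is not immediate from the list of vanishing sequences, because it couples the progress of the gradient step on $f_0$ with the ratio $\lambda_k/\mu_k$. One has to extract from~\eqref{eq:fibaopt} — which is an inequality comparing $f(\mathcal G_f(\lambda,\vect x))$ to $f(\vect y)$ with a quadratic remainder — a one-sided bound showing $f_0(\vect x_{k+1/3})$ cannot undershoot $f_0^*$ by more than a constant multiple of $\lambda_{i_k}$ (hence of $\|\vect x_k - \vect x_{k+1/3}\|$, which vanishes), so that after multiplying by $\lambda_k/\mu_k \to 0$ the whole expression goes to zero. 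A secondary nuisance is keeping the perturbation bookkeeping straight: one must be careful that the $\vect y$ appearing inside $\tilde\rho_1$ ranges over the (bounded) set of reference points actually used in the proofs of Propositions~\ref{prop:CX} and~\ref{prop:CX2}, namely points in $X_1$ and $X_2$ and their projections, so that $\sup_k \zeta_k(2\|\vect y - \vect x_{k+1/3}\| + \mu_k\zeta_k) \to 0$ uniformly over those $\vect y$. Both points are handled by the boundedness of $\{\vect x_k\}$ together with the uniform bounds on the (sub)gradients, so no genuinely new estimate beyond what is already assembled above is needed.
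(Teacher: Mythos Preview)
Your overall strategy is correct and mirrors the paper's proof: rewrite Algorithm~\ref{algo:fiba} in the abstract form~\eqref{eq:algo} with the stepsize sequence $\tilde\lambda_k=\lambda_{i_k}$, verify that $\mathcal G_{f_0}$ and $\tilde{\mathcal O}_{f_1}$ satisfy the required properties with the appropriate error terms, and then apply Propositions~\ref{prop:CX} and~\ref{prop:CX2} in turn. The bookkeeping you do for $\|\vect x_k-\vect x_{k+1/3}\|\to 0$, $\sum_k\lambda_{i_k}=\infty$, $\tilde\rho_1(\mu_k,k)\to 0$, and the lower bound on $f_1(\vect x_{k+2/3})$ is all fine and matches the paper.

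There is, however, a genuine error in the part you yourself flag as ``the main obstacle'', namely showing $\lambda_k[f_0^* - f_0(\vect x_{k+1/3})]_+/\mu_k \to 0$. Your argument concludes by ``multiplying by $\lambda_k/\mu_k \to 0$'', but this ratio does \emph{not} tend to zero: the hypothesis is $\mu_k/\lambda_k\to 0$, hence $\lambda_k/\mu_k\to\infty$. So even if $[f_0^*-f_0(\vect x_{k+1/3})]_+$ were merely bounded, as your estimate claims, the product $\lambda_k[f_0^*-f_0(\vect x_{k+1/3})]_+/\mu_k$ could blow up. The attempt to extract a quantitative undershoot bound from~\eqref{eq:fibaopt} is therefore both unnecessary and, as written, incorrect.

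The fix is much simpler, and it is exactly what the paper does: you already observed that $\vect x_{k+1/3}=\mathcal G_{f_0}(\lambda_{i_k},\vect x_k)=\proj_{X_0}\bigl(\vect x_k-\lambda_{i_k}\nabla f_0(\vect x_k)\bigr)\in X_0$. Since $f_0^*=\min_{X_0}f_0$, this immediately gives $f_0(\vect x_{k+1/3})\ge f_0^*$, so $[f_0^*-f_0(\vect x_{k+1/3})]_+=0$ for every $k$, and the hypothesis of Proposition~\ref{prop:CX2} in question is satisfied trivially. Once you replace your paragraph with this one-line observation, the proof is complete and identical to the paper's.
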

\begin{proof}
   First let us notice that Algorithm~\ref{algo:fiba} can be written as
   \begin{equation*}
      \begin{split}
         \vect x_{k + 1 / 3} &{}:= \mathcal{G}_{f_0}( {\tilde\lambda}_k, \vect x_k )\text;\\
         \vect x_{k + 2 / 3} &{}:= \tilde{\mathcal{O}}_{f_1}( \mu_k, \vect x_{k + 1 / 3} )\text;\\
         \vect x_{k + 1} &{}:= \mathcal{P}_{X_0}( \vect x_{k + 2 / 3} )\text,
      \end{split}
   \end{equation*}
   where ${\tilde\lambda}_k := \lambda_{i_k}$. Since the construction of the algorithm guarantees that $i_k \leq k$, we have $\sum_{k = 0}^\infty {\tilde\lambda}_k = \infty$ and $\mu_k / {\tilde\lambda}_k \to 0$. Because ${\tilde\lambda_k} \leq 1/L_0$, \eqref{eq:fibaopt} holds and therefore, $\mathcal G_{f_0}$ satisfies Property~\ref{prop:propopt1} with $\rho_0( {\tilde\lambda}_k, k ) \equiv 0$. Furthermore, because~\eqref{eq:gradBoundDiff} and the algorithm definition, as already argued, we have $\| \vect x_{k} - \vect x_{k + 1 / 3} \| \to 0$. Also, if $\mathcal O_{f_1}$ satisfies Property~\ref{prop:propoptbound} so does ${\tilde{\mathcal O}}_{f_1}$. 
   Furthermore, as shown above, if $\mathcal O_{f_1}$ satisfies Property~\ref{prop:propopt1} so does ${\tilde{\mathcal O}}_{f_1}$, with the error term given by the factor multiplying $\mu_k$ in the second line of~\eqref{eq:prop1fromprop1}. Thus, the assumed boundedness of $\{ \vect x_k \}$ and of $f_1$ ensure that Proposition~\ref{prop:CX} can be applied so that
   \begin{equation*}
      \lim_{k \to \infty} d_{X_1}( \vect x_k ) = 0.
   \end{equation*}

   Now, because $\vect x_{k + 1 / 3} \in X_0$, we then have $f_0( \vect x_{k + 1 / 3} ) \geq f_0^*$. Furthermore, because of the boundedness assumptions and of $\mu_k \to 0$, it is possible to see that $\tilde\rho_1( \mu_k, k ) \to 0$. Therefore, Proposition~\ref{prop:CX2} can be applied, which leads to the desired conclusion.\qed
\end{proof}

\subsection{Incremental Algorithms for Non-Differentiable Problems}\label{subsec:algoinc}

Here we specialize Algorithm~\eqref{eq:algo} to the case where $f_0$ is the sum of several non-differentiable convex functions:
\begin{equation*}
   f_0 := \sum_{i = 1}^m f_0^i.
\end{equation*}
In this situation we propose the use, for the primary optimization problem, of the incremental subgradient operator, denoted as $\mathcal I_f : \mathbb R^n \times \mathbb R \to \mathbb R^n$, given by:
\begin{equation*}
   \begin{split}
      \vect x^{(1)} & {}= \vect x\\
      \vect x^{(i + 1)} & {}= \vect x^{( i )} - \lambda\tilde\nabla f^i( \vect x^{(i)} )\quad i = 1, 2, \dots, m\\
      \mathcal I_f( \lambda, \vect x ) &{}= \vect x^{( m + 1 )}.
   \end{split}
\end{equation*}

Incremental operators are well known for its fast initial convergence rate and, accordingly, several variations of it have been thoroughly analyzed in the literature~\cite{soz98,sol98,neb01,bhg07,dey01,bet00,ber97}. We will use here the result~\cite[Lemma~2.1]{neb01}:
\begin{lemma}\label{lemm:incsub}
   Assume the subgradients of the convex functions $f_0^i$ are bounded in the following sense:
   \begin{equation}\label{eq:subgradBoundInc}
      \forall \vect x \in \mathbb R^n\quad\text{and}\quad \forall\vect v \in \partial f_0^i( \vect x ), \quad \| \vect v \| \leq C_i, \quad i\in \{1, 2, \dots, m\}.
   \end{equation}
   Then, the incremental subgradient operator satisfies, for every $\lambda \in \mathbb R_+$, and $\vect y, \vect x \in \mathbb R^n$:
   \begin{equation}\label{eq:incsub}
      \| \mathcal I_{f_0}( \lambda, \vect x ) - \vect y \|^2 \leq \| \vect x - \vect y \|^2 - 2\lambda \bigl( f_0( \vect x ) - f_0( \vect y ) \bigr) + \lambda^2\bigl( \sum_{i = 1}^mC_i \bigr)^2\text,
   \end{equation}
   where $f_0 := \sum_{i = 1}^m f_0^i$.
\end{lemma}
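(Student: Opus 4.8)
The plan is to treat $\mathcal I_{f_0}(\lambda,\vect x)$ as a composition of $m$ ordinary subgradient steps, derive a one‑step estimate for each, telescope the estimates, and then absorb the error coming from the fact that the $i$‑th subgradient is evaluated at the intermediate iterate $\vect x^{(i)}$ rather than at the initial point $\vect x$.

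First I would fix $\vect y$ and, for each $i \in \{1,\dots,m\}$, expand
\begin{equation*}
   \| \vect x^{(i+1)} - \vect y \|^2 = \| \vect x^{(i)} - \vect y \|^2 - 2\lambda \tilde\nabla f_0^i( \vect x^{(i)} )^T( \vect x^{(i)} - \vect y ) + \lambda^2 \| \tilde\nabla f_0^i( \vect x^{(i)} ) \|^2,
\end{equation*}
then invoke the subgradient inequality $\tilde\nabla f_0^i( \vect x^{(i)} )^T( \vect x^{(i)} - \vect y ) \geq f_0^i( \vect x^{(i)} ) - f_0^i( \vect y )$ together with the bound $\| \tilde\nabla f_0^i( \vect x^{(i)} ) \| \leq C_i$ supplied by~\eqref{eq:subgradBoundInc}. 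Summing the resulting inequalities over $i = 1,\dots,m$, the intermediate distances telescope because $\vect x^{(1)} = \vect x$ and $\vect x^{(m+1)} = \mathcal I_{f_0}( \lambda, \vect x )$, so one obtains
\begin{equation*}
   \| \mathcal I_{f_0}( \lambda, \vect x ) - \vect y \|^2 \leq \| \vect x - \vect y \|^2 - 2\lambda\sum_{i=1}^m\bigl( f_0^i( \vect x^{(i)} ) - f_0^i( \vect y ) \bigr) + \lambda^2\sum_{i=1}^m C_i^2.
\end{equation*}

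The only delicate point — the step I expect to be the main obstacle — is that $\sum_i f_0^i( \vect x^{(i)} )$ is not the same as $f_0( \vect x ) = \sum_i f_0^i( \vect x )$, since the functions are sampled along the inner trajectory. To repair this I would bound the drift of the inner iterates: each inner step satisfies $\| \vect x^{(j+1)} - \vect x^{(j)} \| = \lambda\| \tilde\nabla f_0^j( \vect x^{(j)} ) \| \leq \lambda C_j$, so the triangle inequality gives $\| \vect x^{(i)} - \vect x \| \leq \lambda\sum_{j=1}^{i-1} C_j$. Because $\partial f_0^i$ is bounded by $C_i$ on all of $\mathbb R^n$, $f_0^i$ is $C_i$‑Lipschitz, whence $f_0^i( \vect x^{(i)} ) \geq f_0^i( \vect x ) - \lambda C_i\sum_{j=1}^{i-1} C_j$. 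Substituting this and regrouping, the extra error collected is $2\lambda^2\sum_{i=1}^m\sum_{j=1}^{i-1} C_i C_j$, yielding
\begin{equation*}
   \| \mathcal I_{f_0}( \lambda, \vect x ) - \vect y \|^2 \leq \| \vect x - \vect y \|^2 - 2\lambda\bigl( f_0( \vect x ) - f_0( \vect y ) \bigr) + \lambda^2\Bigl( \sum_{i=1}^m C_i^2 + 2\sum_{i=1}^m\sum_{j=1}^{i-1} C_i C_j \Bigr).
\end{equation*}

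To finish I would simply recognize the algebraic identity $\sum_{i=1}^m C_i^2 + 2\sum_{i=1}^m\sum_{j=1}^{i-1} C_i C_j = \bigl( \sum_{i=1}^m C_i \bigr)^2$, which turns the last display into exactly~\eqref{eq:incsub} and completes the argument. Everything outside the drift estimate is routine algebra with the subgradient inequality and Cauchy–Schwarz, so the Lipschitz control of the inner path is really the crux.
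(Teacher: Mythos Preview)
Your proposal is correct and is exactly the standard argument; the paper does not give its own proof of this lemma but simply cites it as \cite[Lemma~2.1]{neb01}, where the proof proceeds just as you outline (one-step subgradient expansion, telescoping, Lipschitz control of the inner drift, and the identity $\sum_i C_i^2 + 2\sum_{j<i}C_iC_j = (\sum_i C_i)^2$).
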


Now, notice that the boundedness condition on the subdifferentials leads, for every $\vect x \in \mathbb R^n$, to
\begin{equation}\label{eq:incsubbound}
   \| \mathcal I_{f_0}( \lambda, \vect x ) - \vect x \| \leq \lambda\sum_{i = 1}^m C_i,\quad\text{and}\quad \tilde\nabla f_0( \vect x ) \in \partial f_0( \vect x ) \Rightarrow \| \nabla f_0( \vect x ) \| \leq \sum_{i = 1}^m C_i.
\end{equation}
Thus, applying Proposition~\ref{prop:oneIsOther} we are lead to the following result:
\begin{corollary}
   Assume the subgradients of the convex functions $f_0^i$ satisfy~\eqref{eq:subgradBoundInc}. Then, the incremental subgradient operator satisfies, for every $\lambda \in \mathbb R_+$, and $\vect y, \vect x \in \mathbb R^n$:
   \begin{equation*}
      \| \mathcal I_{f_0}( \lambda, \vect x ) - \vect y \|^2 \leq \| \vect x - \vect y \|^2 - 2\lambda \bigl( f_0\bigl( \mathcal I_{f_0}( \lambda, \vect x ) \bigr) - f_0( \vect y ) \bigr) + 3\lambda^2\bigl( \sum_{i = 1}^mC_i \bigr)^2\text,
   \end{equation*}
   where, again, $f_0 := \sum_{i = 1}^m f_0^i$.
\end{corollary}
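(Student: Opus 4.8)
The plan is to obtain the corollary as an immediate application of Proposition~\ref{prop:oneIsOther} to the operator $\mathcal O_f = \mathcal I_{f_0}$. First I would check that $\mathcal I_{f_0}$ satisfies the two hypotheses of that proposition. The first hypothesis is exactly inequality~\eqref{eq:incsub} of Lemma~\ref{lemm:incsub}: it has the required form with error term $\varrho( \lambda ) := \lambda\bigl( \sum_{i = 1}^m C_i \bigr)^2$, because $\lambda^2\bigl( \sum_{i = 1}^m C_i \bigr)^2 = \lambda\,\varrho( \lambda )$. The second hypothesis, the movement bound $\| \mathcal O_f( \lambda, \vect x ) - \vect x \| \leq \lambda\gamma$, is precisely the left-hand estimate in~\eqref{eq:incsubbound}, so we may take $\gamma := \sum_{i = 1}^m C_i$.

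Next I would substitute these into the conclusion of Proposition~\ref{prop:oneIsOther}, which then reads
\begin{equation*}
   \| \mathcal I_{f_0}( \lambda, \vect x ) - \vect y \|^2 \leq \| \vect x - \vect y \|^2 - 2\lambda\bigl( f_0( \mathcal I_{f_0}( \lambda, \vect x ) ) - f_0( \vect y ) \bigr) + \lambda\Bigl( \varrho( \lambda ) + 2\lambda\gamma\bigl\| \tilde\nabla f_0\bigl( \mathcal I_{f_0}( \lambda, \vect x ) \bigr) \bigr\| \Bigr)
\end{equation*}
for any $\tilde\nabla f_0\bigl( \mathcal I_{f_0}( \lambda, \vect x ) \bigr) \in \partial f_0\bigl( \mathcal I_{f_0}( \lambda, \vect x ) \bigr)$. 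The last step is to bound the bracketed error: the right-hand implication in~\eqref{eq:incsubbound}, applied at the point $\mathcal I_{f_0}( \lambda, \vect x )$, gives $\bigl\| \tilde\nabla f_0\bigl( \mathcal I_{f_0}( \lambda, \vect x ) \bigr) \bigr\| \leq \sum_{i = 1}^m C_i = \gamma$, whence $\varrho( \lambda ) + 2\lambda\gamma\bigl\| \tilde\nabla f_0\bigl( \mathcal I_{f_0}( \lambda, \vect x ) \bigr) \bigr\| \leq \lambda\gamma^2 + 2\lambda\gamma^2 = 3\lambda\gamma^2$, and multiplying by $\lambda$ yields the asserted $3\lambda^2\bigl( \sum_{i = 1}^m C_i \bigr)^2$.

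Since every ingredient is already in place, there is essentially no obstacle. The only minor points requiring care are: (i) Proposition~\ref{prop:oneIsOther} is stated for $\lambda > 0$, so the boundary case $\lambda = 0$ should be noted separately, but there $\mathcal I_{f_0}( 0, \vect x ) = \vect x$ and the inequality is the trivial $\| \vect x - \vect y \|^2 \leq \| \vect x - \vect y \|^2$; and (ii) one must make sure the uniform subgradient bound from~\eqref{eq:incsubbound} is indeed invoked at the \emph{output} point $\mathcal I_{f_0}( \lambda, \vect x )$, which is legitimate precisely because~\eqref{eq:subgradBoundInc} holds at every point of $\mathbb R^n$.
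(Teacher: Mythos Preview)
Your proposal is correct and follows exactly the route the paper takes: the text immediately preceding the corollary derives~\eqref{eq:incsubbound} and then says ``applying Proposition~\ref{prop:oneIsOther} we are lead to the following result,'' which is precisely the combination of Lemma~\ref{lemm:incsub}, the bound~\eqref{eq:incsubbound}, and Proposition~\ref{prop:oneIsOther} that you carry out in detail. Your explicit treatment of the $\lambda = 0$ case and the remark that the subgradient bound must be invoked at the output point are careful additions beyond what the paper spells out.
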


The second algorithm we propose in this work will be called \textsc{iiba}, from Incremental Iterative Bilevel Algorithm, and is described in Algorithm~\ref{algo:iiba} below. For this algorithm we have the following convergence result.

\begin{algorithm}
   \begin{algorithmic}[1]
      \Require{$\vect x_{0}$, $\{\lambda_k\}$, $\{\mu_k\}$}%
      \Statex%
      \State{Initialization: $k \leftarrow 0$}%
      \Statex%
      \Repeat
         \Statex%
         \State{$\vect x_{k + 1 / 3} = \mathcal I_{f_0}( \vect x_k )$}
         \State{$\vect x_{k + 2 / 3} = \mathcal O_{f_1}( \vect x_{k + 1 / 3}, \mu_k )$}%
         \State{$\vect x_{k + 1} = \proj_{X_0}( \vect x_{k + 2/ 3} )$}%
         \State{$k \leftarrow k + 1$}%
         \Statex%
      \Until{convergence is reached}
   \end{algorithmic}
   \caption{Incremental Iterative Bilevel Algorithm}\label{algo:iiba}
\end{algorithm}

\begin{theorem}\label{theo:convIIBA}
   Assume that $f_0$ is of the form $f_0 := \sum_{i = 1}^m f_0^i$ and satisfies~\eqref{eq:subgradBoundInc}, that $f_1$ has a bounded subgradient, and that $\{ f_1( \vect x_{k + 1 / 3} ) \}$ is bounded from below. Assume $\{ \lambda_k \}$ and $\{ \mu_k \}$ are non-negative vanishing scalar sequences such that $\sum_{k = 0}^\infty \lambda_k = \infty$, $\sum_{k = 0}^\infty \mu_k = \infty$, $\mu_k / \lambda_k \to 0$ and $\lambda_k^2 / \mu_k \to 0$. Then, suppose $X_2 \neq \emptyset$ and $X_2$ is bounded (or $\{\vect x_k\}$ is bounded), and $\mathcal O_{f_1}$ satisfies Properties~\ref{prop:propopt1}~and~\ref{prop:propoptbound} with $\rho_1( \mu_k ,k ) \to 0$. Then, the sequence $\{ \vect x_k \}$ generated by Algorithm~\ref{algo:iiba} satisfies
   \begin{equation*}
      \lim_{k \to \infty} d_{X_2}( \vect x_k ) = 0.
   \end{equation*}
\end{theorem}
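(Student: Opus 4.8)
The plan is to recognize Algorithm~\ref{algo:iiba} as an instance of the abstract scheme~\eqref{eq:algo} with $\mathcal O_{f_0} = \mathcal I_{f_0}$ and then verify the hypotheses of Propositions~\ref{prop:CX} and~\ref{prop:CX2}. First I would identify the error term: by the Corollary following Lemma~\ref{lemm:incsub}, $\mathcal I_{f_0}$ satisfies Property~\ref{prop:propopt1} with $\beta = 2$ and $\rho_0(\lambda, k) = 3\lambda\bigl(\sum_{i=1}^m C_i\bigr)^2$, and by~\eqref{eq:incsubbound} it satisfies Property~\ref{prop:propoptbound} with $\gamma = \sum_{i=1}^m C_i$. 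Since $\lambda_k \to 0$, this immediately gives $\rho_0(\lambda_k, k) \to 0$ and $\|\vect x_k - \vect x_{k+1/3}\| \le \lambda_k \sum_i C_i \to 0$. Also~\eqref{eq:incsubbound} yields a uniform bound $\|\vect v_k\| \le \sum_i C_i =: M'$ for subgradients of $f_0$ at every point, in particular at $\vect x_k$; combined with the assumed bounded subgradient of $f_1$ we obtain the constant $M$ required in both propositions (for the $\vect w_k \in \partial f_1(\proj_{X_0}(\vect x_k))$ bound we use the global subgradient bound on $f_1$).

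Next I would apply Proposition~\ref{prop:CX} to conclude $d_{X_1}(\vect x_k) \to 0$. Its hypotheses are met: $X_1 \supseteq X_2 \neq \emptyset$ and boundedness of $\{\vect x_k\}$ (or of $X_2$, hence of $X_1$ once we know iterates stay near it — here we simply take $\{\vect x_k\}$ bounded as permitted); $\sum \lambda_k = \infty$; Property~\ref{prop:propopt1} for both operators and Property~\ref{prop:propoptbound} for $\mathcal O_{f_1}$; $f_1(\vect x_{k+2/3})$ bounded below, which follows from $\{f_1(\vect x_{k+1/3})\}$ bounded below together with $\|\vect x_{k+1/3} - \vect x_{k+2/3}\| \to 0$ (from Property~\ref{prop:propoptbound} and $\mu_k \to 0$) and the bounded subgradient of $f_1$; $\rho_0(\lambda_k,k) \to 0$ as just noted; $\rho_1(\mu_k,k) \le \overline\rho_1$, which holds since $\rho_1(\mu_k,k) \to 0$ by assumption; $\mu_k \to 0$; and $\mu_k/\lambda_k \to 0$.

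Finally I would apply Proposition~\ref{prop:CX2} to reach the conclusion $d_{X_2}(\vect x_k) \to 0$. The new conditions to check beyond those already verified are: $\sum \mu_k = \infty$ (assumed); $\lambda_k[f_0^* - f_0(\vect x_{k+1/3})]_+ / \mu_k \to 0$ — but since $\vect x_{k+1/3} = \proj_{X_0}(\vect x_k - \lambda_k \nabla f_0(\vect x_k)) \in X_0$, we have $f_0(\vect x_{k+1/3}) \ge f_0^*$ so this term is identically zero; $d_{X_1}(\vect x_k) \to 0$ (just proven); $\lambda_k \rho_0(\lambda_k, k)/\mu_k \to 0$, which equals $3\lambda_k^2\bigl(\sum_i C_i\bigr)^2/\mu_k \to 0$ precisely by the hypothesis $\lambda_k^2/\mu_k \to 0$; and $\rho_1(\mu_k, k) \to 0$ (assumed). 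The main obstacle is bookkeeping: the term $\lambda_k \rho_0(\lambda_k,k)/\mu_k$ is exactly where the incremental operator's $O(\lambda_k^2)$ error interacts with the secondary stepsize, and this is why the extra hypothesis $\lambda_k^2/\mu_k \to 0$ appears here but not in Theorem~\ref{theo:convFIBA} (where $\rho_0 \equiv 0$). Everything else is routine verification against the two propositions.
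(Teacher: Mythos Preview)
Your overall strategy is right and matches the paper: verify that $\mathcal I_{f_0}$ enjoys Properties~\ref{prop:propopt1} and~\ref{prop:propoptbound} with $\rho_0(\lambda,k) = O(\lambda)$, then invoke Propositions~\ref{prop:CX} and~\ref{prop:CX2}. However, there is a genuine error in your verification of the hypothesis $\lambda_k[f_0^* - f_0(\vect x_{k+1/3})]_+/\mu_k \to 0$ of Proposition~\ref{prop:CX2}.

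You write that $\vect x_{k+1/3} = \proj_{X_0}\bigl(\vect x_k - \lambda_k\nabla f_0(\vect x_k)\bigr) \in X_0$, and conclude the term is identically zero. But that is the first step of Algorithm~\ref{algo:fiba}, not of Algorithm~\ref{algo:iiba}. In Algorithm~\ref{algo:iiba} the first substep is $\vect x_{k+1/3} = \mathcal I_{f_0}(\lambda_k,\vect x_k)$, an \emph{unprojected} incremental subgradient sweep, so in general $\vect x_{k+1/3}\notin X_0$ and there is no reason for $f_0(\vect x_{k+1/3})\ge f_0^*$. The correct argument, as in the paper, uses instead that $\vect x_k = \proj_{X_0}(\vect x_{(k-1)+2/3})\in X_0$ for $k\ge 1$, so $f_0(\vect x_k)\ge f_0^*$; then
\[
   [f_0^* - f_0(\vect x_{k+1/3})]_+ \le [f_0(\vect x_k) - f_0(\vect x_{k+1/3})]_+ \le M\|\vect x_k - \vect x_{k+1/3}\| = O(\lambda_k),
\]
using the uniform subgradient bound and~\eqref{eq:incsubbound}. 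Hence $\lambda_k[f_0^* - f_0(\vect x_{k+1/3})]_+/\mu_k = O(\lambda_k^2/\mu_k)\to 0$. This is the \emph{second} place where the hypothesis $\lambda_k^2/\mu_k\to 0$ is needed, not only for $\lambda_k\rho_0(\lambda_k,k)/\mu_k$. Once you fix this step, the rest of your argument goes through.
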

\begin{proof}
   Notice that Lemma~\ref{lemm:incsub} together with Proposition~\ref{prop:oneIsOther} and the subgradient boundedness assumption imply that $\mathcal I_{f_0}$ satisfies the desired Property~\ref{prop:propopt1}. Also, $\lambda_k \to 0$ implies, together with the subgradient boundedness assumption, that $\| \vect x_k - \vect x_{k + 1/3} \| \to 0$ and that $\rho_0( \lambda_k, k ) \to 0$, the latter because of Lemma~\ref{lemm:incsub}. 
   Therefore, Proposition~\ref{prop:CX} implies that
   \begin{equation*}
      d_{X_1}( \vect x_k ) \to 0.
   \end{equation*}

   Now, notice the subgradient boundedness assumption and~\eqref{eq:incsubbound} imply that $[ f_0( \vect x_k ) - f_0( \vect x_{k + 1 / 3} ) ]_+  = O( \lambda_k )$, and, therefore, since $\vect x_k \in X_0$ for $k > 0$, we have $[ f_0^* - f( \vect x_{k + 1 / 3} ) ]_+  = O( \lambda_k )$. Thus, $\lambda_k^2 / \mu_k \to 0$ implies $\lambda_k[ f_0^* - f( \vect x_{k + 1 / 3} ) ]_+ / \mu_k \to 0$. Furthermore, since, by~\eqref{eq:incsub}, $\rho_0( \lambda_k, k ) = O( \lambda_k )$, $\lambda_k^2 / \mu_k$ also implies $\lambda_k\rho_0( \lambda_k, k ) / \mu_k \to 0$. So, finally, Proposition~\ref{prop:CX2} can be applied, which proves the result.\qed

\end{proof}

\subsection{Stopping Criterion}

Here we devise a stopping criterion for the proposed bilevel methods, based on inequalities~\eqref{eq:approx_f0}~and~\eqref{eq:diffestf1Final} coupled to the following Lemma:
\begin{lemma}\label{lemm:ratioOfSums}
   Let $\{ a_k \}$ and $\{ b_k \}$ be non-negative sequences such that $a_k / b_k \to 0$ and $\sum_{k = 0}^\infty b_k = \infty$. Then
   \begin{equation*}
      \lim_{n \to \infty} \frac{\sum_{k = 0}^n a_k}{\sum_{k = 0}^n b_k} = 0.
   \end{equation*}
\end{lemma}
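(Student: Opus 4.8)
The plan is to prove this by a standard $\varepsilon$-splitting argument on the numerator, exploiting that the denominator diverges to absorb the ``initial segment'' contribution. First I would fix $\varepsilon > 0$. Since $a_k / b_k \to 0$, there is an index $K$ such that $a_k \leq \varepsilon b_k$ for all $k \geq K$. Then, for $n > K$, I would split the numerator as
\begin{equation*}
   \sum_{k = 0}^n a_k = \sum_{k = 0}^{K - 1} a_k + \sum_{k = K}^n a_k \leq A_K + \varepsilon \sum_{k = K}^n b_k \leq A_K + \varepsilon \sum_{k = 0}^n b_k,
\end{equation*}
where $A_K := \sum_{k = 0}^{K - 1} a_k$ is a fixed finite constant (all terms are non-negative, so this is legitimate), and in the last step I used $b_k \geq 0$ to extend the range of summation.

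Dividing through by $\sum_{k = 0}^n b_k$ (which is eventually positive and, by hypothesis, tends to $\infty$), I would obtain
\begin{equation*}
   \frac{\sum_{k = 0}^n a_k}{\sum_{k = 0}^n b_k} \leq \frac{A_K}{\sum_{k = 0}^n b_k} + \varepsilon.
\end{equation*}
Since $\sum_{k = 0}^n b_k \to \infty$ as $n \to \infty$, the first term on the right vanishes in the limit, so $\limsup_{n \to \infty} \sum_{k = 0}^n a_k / \sum_{k = 0}^n b_k \leq \varepsilon$. As $\varepsilon > 0$ was arbitrary and the ratio is non-negative, the limit exists and equals $0$.

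There is essentially no hard part here: the only point requiring a modicum of care is ensuring the denominator is nonzero before dividing (handled by divergence of $\sum b_k$, possibly after discarding finitely many leading zeros) and that the tail estimate $a_k \leq \varepsilon b_k$ is applied only on the range $k \geq K$ while the head $A_K$ is treated as a constant independent of $n$. This is the discrete analogue of a Cesàro-type / Stolz–Cesàro argument, and the non-negativity hypotheses are exactly what make the range-extension step $\sum_{k=K}^n b_k \leq \sum_{k=0}^n b_k$ valid.
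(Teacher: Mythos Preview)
Your proof is correct and follows essentially the same approach as the paper's: both fix $\varepsilon>0$, choose a threshold index beyond which $a_k\leq\varepsilon b_k$, split the numerator into a finite head and a tail, let the divergence of $\sum b_k$ absorb the head, and bound the tail by $\varepsilon$. The only cosmetic difference is that the paper bounds the tail ratio via a weighted-average manipulation $\sum_{k\geq k_0}(a_k/b_k)b_k\big/\sum_{k\geq k_0}b_k\leq\varepsilon$, whereas you directly extend the tail sum of $b_k$ to the full sum; both arrive at the same conclusion.
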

\begin{proof}
   Choose any $\alpha > 0$ and let $k_0$ be such that $a_k / b_k \leq \alpha$ for every $k \geq k_0$. Then
   \begin{equation*}
      \begin{split}
         \lim_{n \to \infty} \frac{\sum_{k = 0}^n a_k}{\sum_{k = 0}^n b_k} &{}= \lim_{n \to \infty} \frac{\sum_{k = 0}^{k_0 - 1} a_k + \sum_{k = k_0}^n a_k}{\sum_{k = 0}^n b_k}\\
            &{}= \lim_{n \to \infty} \frac{\sum_{k = k_0}^n a_k}{\sum_{k = 0}^n b_k}.
      \end{split}
   \end{equation*}
   But
   \begin{equation*}
      \begin{split}
         \frac{\sum_{k = k_0}^n a_k}{\sum_{k = 0}^n b_k} \leq \frac{\sum_{k = k_0}^n a_k}{\sum_{k = k_0}^n b_k} = \frac{\sum_{k = k_0}^n ( a_k / b_k ) b_k}{\sum_{k = k_0}^n b_k} \leq \alpha.
      \end{split}
   \end{equation*}
   Therefore, $\lim_{\to\infty} \left| \sum_{k = 0}^n a_k \middle/ \sum_{k = 0}^n b_k \right| \leq \alpha$ for any $\alpha > 0$.\qed
\end{proof}

In order to explain the stopping criterion, we resort to the concept of best-so-far iteration. Let us denote as $\phi_i^{k_0, k}$, for $i \in \{ 0, 1 \}$ and $k_0 \leq k$ integers in $\{ 0, 1, \dots \}$, the smallest value in the set $\{ f_i( \vect x_{k_0} ), f_i( \vect x_1 ), \dots, f_i( \vect x_k ) \}$. For the special case $k_0 = 0$, we simplify the notation by $\phi_i^k := \phi_i^{0, k}$. Then, successive application of~\eqref{eq:approx_f0} together with $\phi_i^k \leq f_i( \vect x_j )$ for all $j \leq k$ leads to
\begin{multline*}
   \phi_0^k - f_0^* \leq \frac{\| \vect x_0 - \vect x^* \|^2}{\beta\sum_{i = 0}^k\lambda_i} + \frac{\sum_{i = 0}^k\lambda_i\bigl( \rho_0( \lambda_i, i ) + \beta M\| \vect x_{i + 1 / 3} - \vect x_{i} \| \bigr)}{\beta\sum_{i = 0}^k\lambda_i}\\
   {}+ \frac{\sum_{i = 0}^k\mu_i\bigl( \beta N + \rho_1( \mu_i, i ) \bigr)}{\beta\sum_{i = 0}^k\lambda_i} =: \sigma_0^k.
\end{multline*}
Therefore, under the hypothesis of Proposition~\ref{prop:CX}, Lemma~\ref{lemm:ratioOfSums} ensures that the right-hand side of the above inequality vanishes as the iterations proceed. Thus, the quantity $\sigma_0^k$
can be used as a measure of convergence to $X_1$ bounding the difference between the best $f_0$ function value to the optimal $f_0^*$, as long as it is possible to estimate the distance $\| \vect x_0 - \vect x^* \|$, the behavior of the error terms and the subgradient bounding constants. Notice that, in principle, the constant $N$ require knowledge of the optimal value in this case, but it can be replaced by an upper bound for it, which should not be difficult to obtain in many cases.

We can use a very similar reasoning in order to estimate optimality of the secondary objective function too. Applying~\eqref{eq:diffestf1Final} repeatedly and recalling that $\phi_1^{k_0, k} \leq f_1( \vect x_i )$ for every $i \in \{ k_0, k_0 + 1, \dots, k \}$ we have
\begin{multline*}
   \phi_1^{k_0, k} - f_1^* \leq \frac{\| \vect x_{k_0} - \vect x^* \|^2}{\beta\sum_{i = k_0}^k\mu_i} + \frac{\sum_{i = k_0}^k\lambda_i\bigl( \rho_0( \lambda_i, i ) + \beta [f_0^* - f_0( \vect x_{k + 1/3} )]_+\bigr)}{\beta\sum_{i = k_0}^k\mu_i}\\
   {}+ \frac{\sum_{i = k_0}^k\mu_i\bigl( \rho_1( \mu_i, i ) + \beta M\| \vect x_{i + 2 / 3} - \vect x_{i} \|\bigr)}{\beta\sum_{i = k_0}^k\mu_i} =: \sigma_1^{k_0, k}.
\end{multline*}
Now, under the hypothesis of Proposition~\ref{prop:CX2}, Lemma~\ref{lemm:ratioOfSums} ensures that for any $k_0$, we have $\lim_{k \to \infty}\sigma_0^{k_0, k} = 0$.

We can now describe how to stop the algorithm at a non-negative integer $\kappa_{\epsilon_0, \epsilon_1}$ such that we have
\begin{equation*}
   f_0( \vect x_{\kappa_{\epsilon_0, \epsilon_1}} ) - f_0^* \leq \epsilon_0\quad\text{and}\quad f_1( \vect x_{\kappa_{\epsilon_0, \epsilon_1}} ) - f_1^* \leq \epsilon_1\text,
\end{equation*}
for any pair of positive numbers $\epsilon_0$ and $\epsilon_1$. Let us consider the following procedure:
\begin{enumerate}
   \item Iterate the algorithm until $\sigma_0^k \leq \epsilon_0$;
   \item $k_0 \leftarrow k$, $\kappa \leftarrow k$;
   \item Iterate the algorithm until $\sigma_1^{\kappa, k} \leq \epsilon_1$;\label{item:stopf1}
   \item Let $k_1 \geq \kappa$ be such that $f_1( \vect x_{k_1} ) = \phi_1^{\kappa, k}$;
   \item If $f_0( \vect x_{k_1} ) \leq \phi_0^{k_0}$: STOP;\label{item:stopf0}
   \item $\kappa \leftarrow k$; Go to step~\ref{item:stopf1}.
\end{enumerate}
Notice that once the procedure has stopped, then
\begin{equation*}
   f_0( \vect x_{k_1} ) - f_0^* \leq \phi_0^{k_0} - f_0^* \leq \sigma_0^k \leq \epsilon_0\text,
\end{equation*}
and
\begin{equation*}
   f_1( \vect x_{k_1} ) - f_0^* = \phi_1^{\kappa, k} - f_0^* \leq \sigma_1^{\kappa, k} \leq \epsilon_1\text.
\end{equation*}
The procedure indeed stops because since Proposition~\ref{prop:CX} ensures convergence in norm to $X_1$, mild subgradient boundedness assumptions will guarantee also that $f_0( \vect x_k ) \to f_0^*$. Therefore we know that for large enough $k$, there holds $f_0( \vect x_{k} ) \leq \phi_0^{k_0}$. That is, the condition in step~\ref{item:stopf0} will eventually be satisfied as the iterations proceed.

\section{Application Problem Presentation}

We have performed experiments involving tomographic image reconstruction. In tomography, the idealized problem is to reconstruct a function $\mu : \mathbb R^2 \to \mathbb R$ given the values of its integrals along straight lines, that is, given its \emph{Radon transform} denoted as $\mathcal R[ \mu ]$ and defined by the following equality:
\begin{equation*}
   \mathcal R[ \mu ]( \theta, t ) := \int_{\mathbb R} \mu\left( t\left(\begin{smallmatrix}\cos\theta\\ \sin\theta\end{smallmatrix}\right) + s \left(\begin{smallmatrix}-\sin\theta\\ \cos\theta\end{smallmatrix}\right)\right) \mathrm ds.
\end{equation*}
Figure~\ref{fig:rad} (adapted from~\cite{hcc14}) brings a graphical representation of this definition.

\begin{figure}
   \centering%
   \newcommand{\shepplogan}
{%
   \begin{scope}[line width=0pt]
      \path[color=black!100,draw,fill] (0,0)       ellipse (0.69 and 0.92);
      \path[color=black!20,draw,fill]  (0,-0.0184) ellipse (0.6624 and 0.874);

      \path[color=black!30,draw,fill] (0,0.35)       ellipse (0.21 and 0.25);
      \path[color=black!30,draw,fill] (0,-0.1)       ellipse (0.046 and 0.046);
      \path[color=black!30,draw,fill] (-0.08,-0.605) ellipse (0.046 and 0.023);
      \path[color=black!30,draw,fill] (0,-0.606)     ellipse (0.023 and 0.023);
      \path[color=black!30,draw,fill] (0.06,-0.605)  ellipse (0.023 and 0.046);
      \path[color=black!30,draw,fill] (0.06,-0.605)  ellipse (0.023 and 0.046);

      \path[color=black!0,draw,fill,xshift=0.22\grfxunit,rotate=-18] (0,0) ellipse (0.11 and 0.31);
      \path[color=black!0,draw,fill,xshift=-0.22\grfxunit,rotate=18] (0,0) ellipse (0.16 and 0.41);
      \begin{scope}
         \path[clip,xshift=-0.22\grfxunit,rotate=18] (0,0)    ellipse (0.16 and 0.41);
         \path[color=black!10,draw,fill]             (0,0.35) ellipse (0.21 and 0.25);
         \path[color=black!10,draw,fill]             (0,-0.1) ellipse (0.046 and 0.046);
      \end{scope}
   \end{scope}

   \path[color=black!30,draw,fill]    (0,0.1)  ellipse (0.046 and 0.046);
   \begin{scope}
      \path[clip]                     (0,0.1)  ellipse (0.046 and 0.046);
      \path[color=black!40,draw,fill] (0,0.35) ellipse (0.21 and 0.25);
   \end{scope}
}%
   \input{figradonteta.tex}%
   \def\tlinha{-0.6}%
   \setlength{\grftotalwidth}{0.45\columnwidth}%
   \setlength{\grfticksize}{0.5\grfticksize}%
   \small{\ }\hfill%
   \begin{grfgraphic}{%
      \def\grfxmin{-2.05}\def\grfxmax{1.5}%
      \def\grfymin{-1.5}\def\grfymax{2.05}%
   }%
      \begin{scope}[>=stealth,style=grfaxisstyle,<->]%
         \shepplogan%
         \draw (-1.5,0) -- (1.5,0);%
         \draw (0,-1.5) -- (0,1.5);%
         \foreach \i in {-1,1}%
         {
            \draw[style=grftickstyle,-] (\i\grfxunit,-\grfticksize) -- (\i\grfxunit,\grfticksize);%
            \draw[style=grftickstyle,-] (-\grfticksize,\i\grfyunit) -- (\grfticksize,\i\grfyunit);%
         }
         \begin{scope}[rotate=\teta]
            \draw (-1.5,0) -- (1.5,0);
            \foreach \i in {-1,1}
               \draw[style=grftickstyle,-] (\i\grfxunit,-\grfticksize) -- (\i\grfxunit,\grfticksize);
            \draw[dashed,dash phase=-0.005\grfyunit] (\tlinha,-1.5) -- (\tlinha,1.5);
            \fill (\tlinha,0) node[anchor=north,inner sep=\grflabelsep] {\scriptsize$t$} circle (0.025cm);
            \draw[-] (\tlinha\grfxunit,0.3em) -| (\tlinha\grfxunit - 0.3em,0pt);
            \fill (\tlinha\grfxunit - 0.15em,0.15em) circle (0.025cm);
         \end{scope}
         \begin{scope}[rotate=\teta,yshift=1.5\grfyunit]
            \draw[line width=0.025cm] plot file {figradon.data};
            \draw[-]  (1.5,0) -- (-1.5,0) node[anchor=north,rotate=\teta,inner sep=\grflabelsep] {\scriptsize$t$};
            \draw[->] (0,0)    -- (0,0.7) node[anchor=west,rotate=\teta,inner sep=\grflabelsep]  {\scriptsize$\mathcal R[ \mu ](\theta,t)$};
         \end{scope}
         \def\rad{0.075}
         \FPupn{\cpt}{0.552285 \rad{} * 90 \teta{} / *}
         \path (\teta:\rad) ++(\teta - 90:\cpt) node (a) {};
         \draw[-] (0,0) -- (\rad,0) .. controls +(0,\cpt) and (a) .. (\teta:\rad) -- cycle;
         \draw[style=grftickstyle,-,rotate=\tetameio,xshift=\rad\grfxunit] (-\grfticksize,0pt) -- (\grfticksize,0pt);
         \path[rotate=\tetameio,xshift=0.3em] (\rad,0) node[anchor=west,inner sep=0pt] {\scriptsize$\theta$};
      \end{scope}
   \end{grfgraphic}\hfill%
   \begin{grfgraphic}[1.125]{%
      \grfyaxis[R]{[]-1;0;1[]}{[]\tiny$-1$;\tiny$0$;\tiny$1$[]}%
      \grfylabel{\footnotesize$t$}%
      \grfxaxis[R]{[]0;1.57;3.14[]}{[]\tiny$0$;\tiny$\frac\pi2$;\tiny$\pi$[]}%
      \grfxlabel{\footnotesize$\theta$}%
      \def\grfxmin{0}%
      \def\grfxmax{3.14}%
      \grfwindow%
   }%
      \node[anchor=north west,inner sep=0pt] at (0,1)
      {\includegraphics[width=3.14\grfxunit,height=2\grfyunit]
      {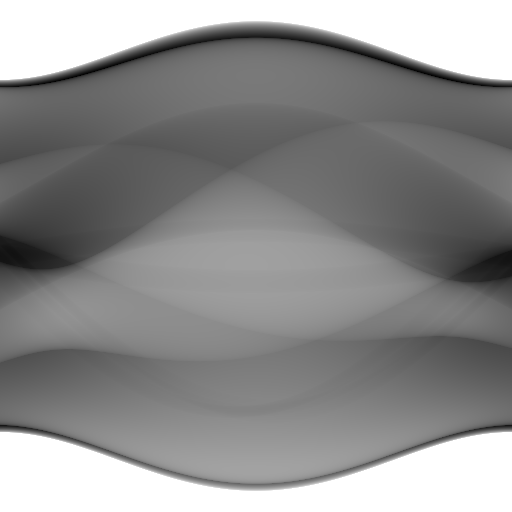}};%
   \end{grfgraphic}\hfill{\ }%
   \caption{Left: schematic representation of the Radon transform. In
   the definition, $\theta$ is the angle between the normal to
   the integration path and the horizontal axis, while $t$ is the
   line of integration's displacement from the origin.
   Right: Image of the Radon transform of the image shown on the
   left in the $\theta \times t$ coordinate system.}\label{fig:rad}%
\end{figure}

Data was obtained by transmitting synchrotron radiation through samples of eggs taken from a fish of the species \emph{Prochilodus lineatus} collected at the Madeira River`s bed, immersed in distilled water inside a capillary test tube. Data acquisition was performed at the Brazilian National Synchrotron Light Source (\textsc{lnls})\footnote{\url{http://lnls.cnpem.br/}}.

In a transmission tomography setup~\cite{kas88,nat86,her80} like the one we have used, the value of the line integral is estimated through the emitted to detected intensity ratio according to Beer--Lambert law:
\begin{equation*}
   \frac{I_e}{I_d} = e^{\int_{L}\mu( \vect x )\mathrm ds},
\end{equation*}
where $I_e$ is the emitted intensity, $I_d$ is the detected intensity, $\mu$ gives the linear attenuation factor of the imaged object at each point in space, and $L$ is the straight line connecting detector to emitter. While in this case the reconstruction problem is essentially bi-dimensional, a simultaneous acquisition of a radiography of $2048$ parallel slices of the object to be imaged is made at each angle, which enables volumetric reconstruction, if desired, by the stacking of several bi-dimensional reconstructions. After each plain \textsc{x}-ray imaging, the sample is rotated and new samples of the Radon transform are estimated in the same manner at a new angle. Figure~\ref{fig:experimental_Radon} depicts the process of assembling the $2048 \times 200$ data array, which will be used for a slice reconstruction, from the $200$ images of size $2048 \times 2048$.

In our application the imaged subject is sensitive in a way such that a overly long exposure time under a low energy \textsc{x}-ray beam may overheat or otherwise physically damage the sample. Therefore, because the exposure time for good radiographies under a monochromatic beam at LNLS' facilities was experimentally found to be at least $20$ seconds, the Radon Transform was sampled at only $200$ evenly spaced angles covering the interval $[-\pi, 0]$, a relatively small number if we are willing to reconstruct full resolution $2048 \times 2048$ images from this data. In this case, it is likely that problem~\eqref{eq:leastsq} will have many solutions and we need to select one of these, therefore the need of a bilevel model arises.

\begin{figure}
   \input{egg_illustration_data.tex}%
   \centering%
   \setlength{\grftotalwidth}{\textwidth}%
   \begin{grfgraphic}{%
      \def\grfxmin{0.0}\def\grfxmax{3.0}%
      \def\grfymin{-0.5}\def\grfymax{0.5}%
   }%
      \node[inner sep=0pt] at (0.5,0.0) {\includegraphics[width=\grfxunit]{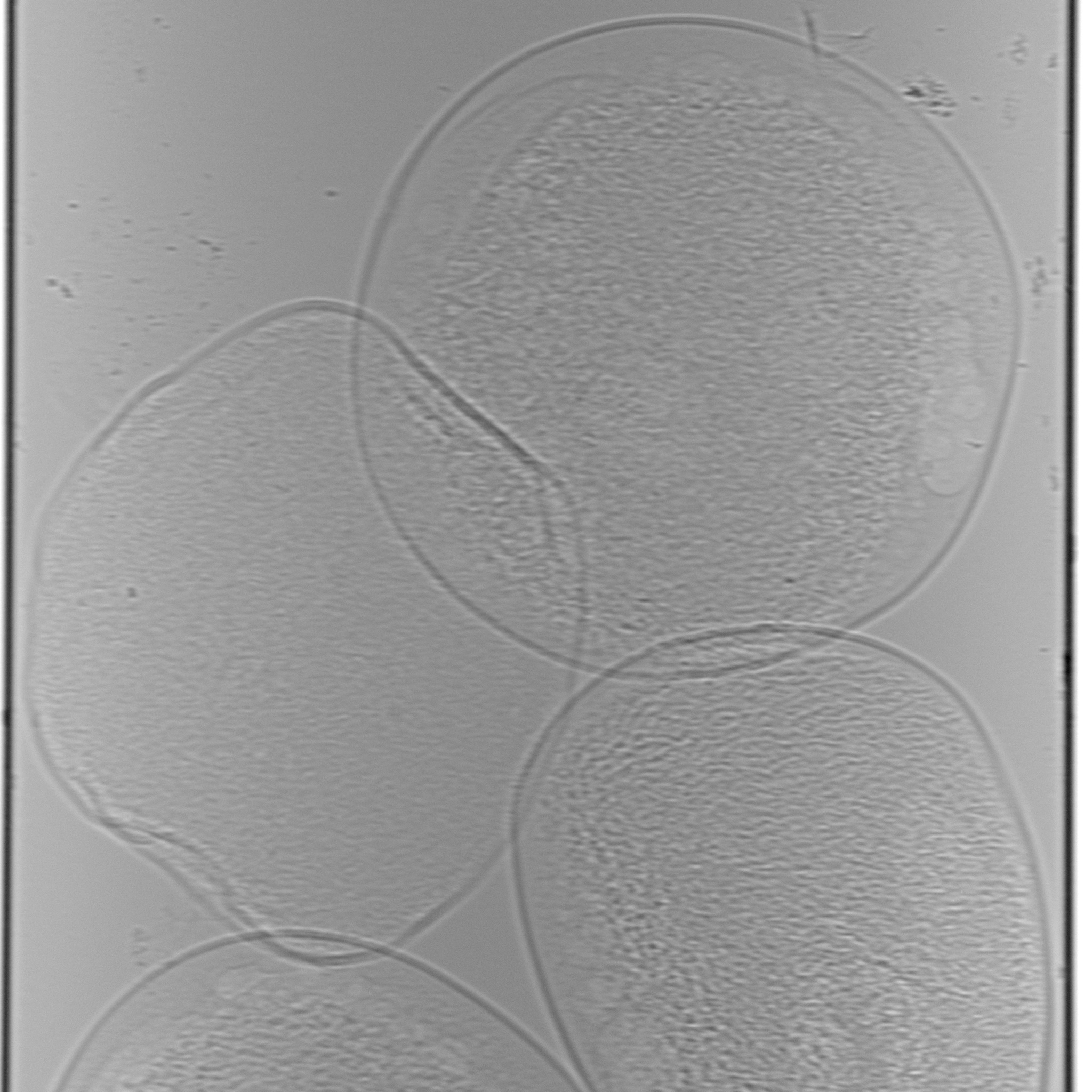}};%
      \node[inner sep=0pt] at (1.5,0.0) {\includegraphics[width=\grfxunit]{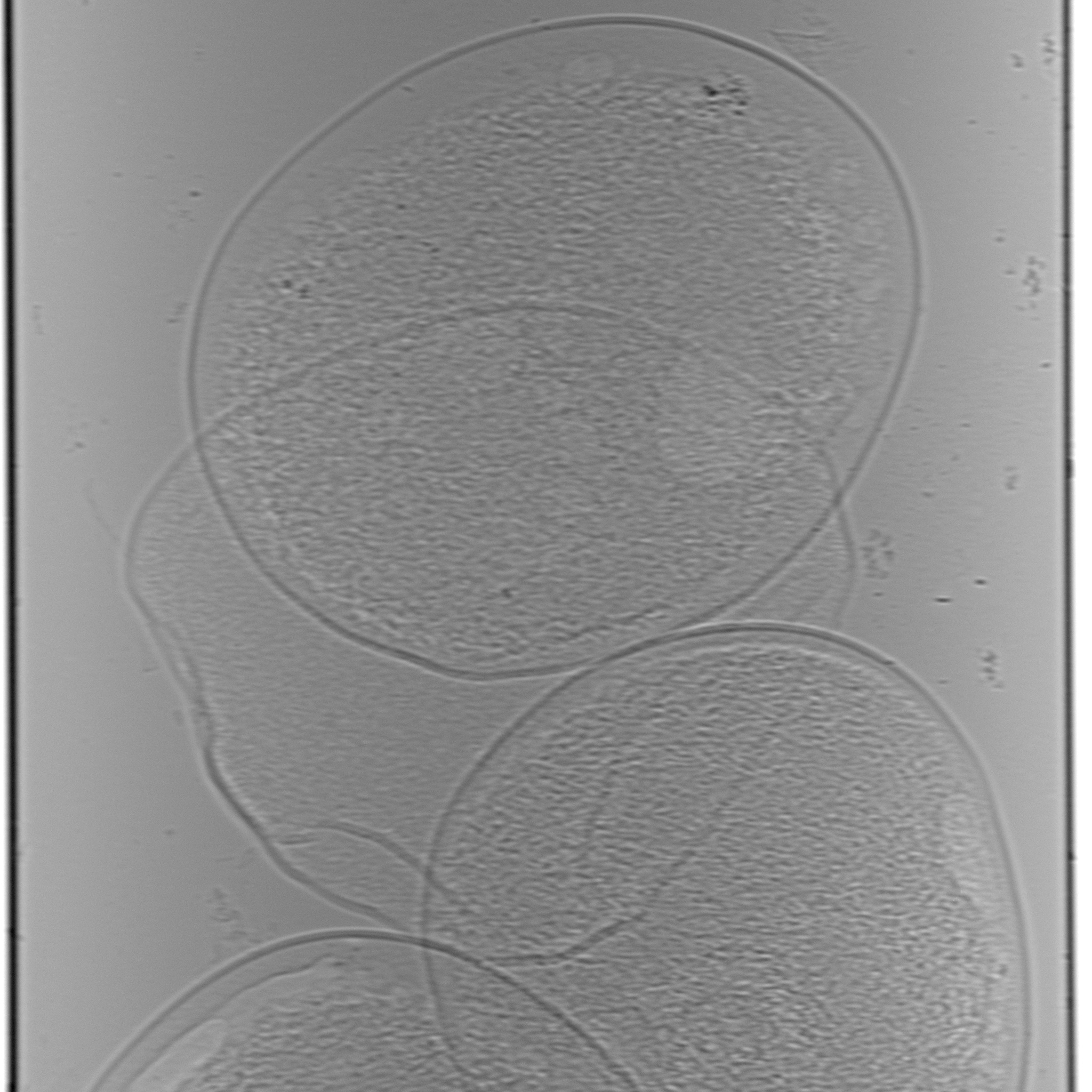}};%
      \node[inner sep=0pt] at (2.5,0.0) {\includegraphics[width=\grfxunit]{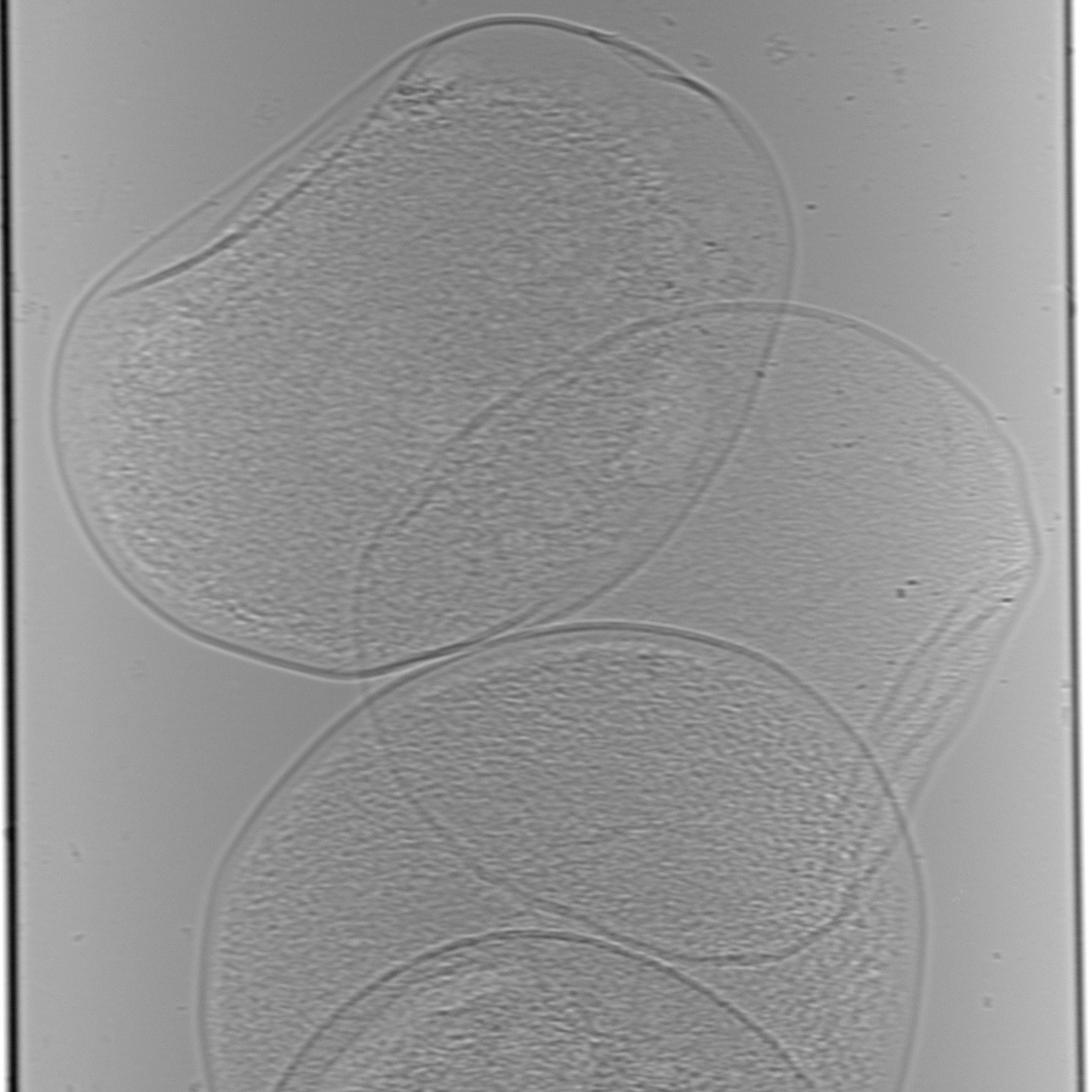}};%
      \draw[grfaxisstyle] (0.0,-0.5) rectangle (1.0,0.5);%
      \draw[grfaxisstyle] (1.0,-0.5) rectangle (2.0,0.5);%
      \draw[grfaxisstyle] (2.0,-0.5) rectangle (3.0,0.5);%
      \draw[grfaxisstyle,red,opacity=0.5] (0.0,\sliceHeight) -- (1.0,\sliceHeight);%
      \draw[grfaxisstyle,green,opacity=0.5] (1.0,\sliceHeight) -- (2.0,\sliceHeight);%
      \draw[grfaxisstyle,blue,opacity=0.5] (2.0,\sliceHeight) -- (3.0,\sliceHeight);%
   \end{grfgraphic}\\[5pt]%
   \begin{grfgraphic}[0.6]{%
      \grfxaxis{[]0.0;1.0;2.0;3.0[]}{[]\small$0.0$;\small$-\frac\pi3$;\small$-\frac{2\pi}3$;\small$-\pi$[]}%
      \grfyaxis[r]{[]-1.0;0.0;1.0[]}{[]\small$-0.38$;\small$0.0$;\small$0.38$[]}%
   }%
      \node[inner sep=0pt] at (1.5,0.0) {\includegraphics[width=3\grfxunit,height=2\grfyunit]{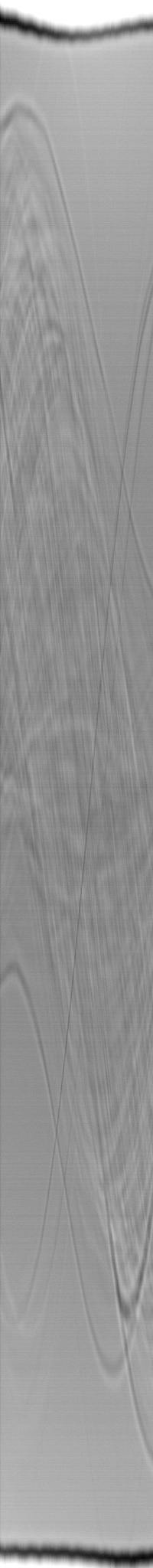}};%
      \draw[grfaxisstyle] (0.0,-1.0) rectangle (3.0,1.0);%
      \draw[grfaxisstyle,red,opacity=0.5] (\projectionAPos,-1.0) -- (\projectionAPos,1.0);%
      \draw[grfaxisstyle,green,opacity=0.5] (\projectionBPos,-1.0) -- (\projectionBPos,1.0);%
      \draw[grfaxisstyle,blue,opacity=0.5] (\projectionCPos,-1.0) -- (\projectionCPos,1.0);%
   \end{grfgraphic}%
   \caption{Assembly of the Radon Transform. Top: three of the $200$ radiographic images used, each of which has $2048 \times 2048$ pixels and depicts a square region of area $0.76 \times 0.76$mm${^2}$. Bottom: the $n^{\text{\scriptsize th}}$ row of the $i^{\text{\scriptsize th}}$ image has samples of $\mathcal R[ g_n ]( \theta_i, \cdot )$, that is, a column in the representation of $\mathcal R[ g_n ]$ in the $\theta \times t$ plane, where $g_n : \mathbb R^2 \to \mathbb R$ gives the linear attenuation factor at each point in the $n^{\text{\scriptsize th}}$ slice to be reconstructed. The colored solid lines depict the position of the radiographies' rows in the resulting sinogram.}\label{fig:experimental_Radon}%
\end{figure}

\subsection{Primary Objective Functions}

Assuming the original image $g : \mathbb R^2 \to \mathbb R_+$ lies in a finite dimensional vector space generated by some basis $\{ g^1, g^2, \dots, g^n \}$ and recognizing that the number of measurements is always finite in practice, one can reduce the problem of tomographic reconstruction to a linear system of equations:
\begin{equation}\label{eq:linsys}
   R\vect x = \vect b,
\end{equation}
where the elements $r_{ij}$ of the matrix $R$ are given by
\begin{equation}\label{eq:linsys}
   \mathcal R[g^j]( \theta_i, t_i )
\end{equation}
and the elements $b_i$ of the vector $\vect b$ are the corresponding experimental data, that is, $b_i$ is an approximate sample of $\mathcal R[g]( \theta_i, t_i )$, where $g = \sum_{i = 1}^nx_ig^i$ is the desired image. Because actual microtomographic data from synchrotron illumination will always contain errors (either from micrometric misalignment of the experimental setup, small dust particles in the emitter-detector path, or from statistical fluctuation of photon emission and attenuation), the above linear system of equations may not have a solution.

\subsection{Differentiable Primary Objective Function}

System~\eqref{eq:linsys} can be replaced by a constrained least squares problem, in order to alleviate the likely lack of consistency:
\begin{equation}\label{eq:leastsq}
   X_1 = \argmin_{\vect x \in \mathbb R_+^n} q( \vect x ) := \frac12\| R\vect x - \vect b \|^2.
\end{equation}
Therefore, in this case we will be lead to a bilevel problem of the form~\eqref{eq:bilevel} with $X_0 = \mathbb R_+^n$.


Another option for a feasibility set would be the bounded box $\{ \vect x \in \mathbb R^n : 0 \leq x_i \leq u \}$ for some $u > 0$. This may be a sensible idea if the maximum attenuation factor is known beforehand and we could thus include this information into the model. Because we do not make this kind of boundedness imposition, by using the least squares function we are at the risk of violating certain (sub)gradient boundedness assumption. We therefore use another continuously differentiable convex function $f_0$, which has an uniformly bounded gradient and is such that
\begin{equation}\label{eq:probEquiv}
   \argmin_{\vect x \in X_0} f_0( \vect x ) = \argmin_{\vect x \in X_0} q( \vect x ).
\end{equation}
In order to build such function, take any $\tilde{\vect x} \in X_0$ (such as, in our case, $\tilde{\vect x} = \vect 0$) and set
\begin{equation*}
   \Delta := \| R\tilde{\vect x} - \vect b \| > 0,
\end{equation*}
then define
\begin{equation*}
   f_0( \vect x ) := \frac12\sum_{i = 1}^m h( \langle r_i, \vect x \rangle - b_i ),
\end{equation*}
where $r_i^T$ is the $i$-th row of $R$ and the function $h : \mathbb R \to \mathbb R_+$ is given as
\begin{equation*}
   h( x ) := \begin{cases}
                      x^2 & \text{if}\quad |x| < \Delta\\
                      2\Delta|x| - \Delta^2 & \text{otherwise.}
                   \end{cases}
\end{equation*}
Notice that if $q(\vect x) \leq q( \tilde{\vect x} ) = 1/2\Delta^2$, then $f_0( \vect x ) = q( \vect x )$ and, furthermore, if $f_0( \vect x ) \neq q( \vect x )$, then $f_0( \vect x ) \geq 1/2\Delta^2 = q( \tilde{\vect x} )$. Therefore, \eqref{eq:probEquiv} holds.

We need to show that the uniformly bounded continuous differentiability and convexity claims hold, but these facts follow from convexity and continuous differentiability of $h$. Its derivative can be computed to be
\begin{equation*}
   h'( x ) := \begin{cases}
                 2x & \text{if}\quad |x| < \Delta\\
                 2\Delta\sign(x) & \text{otherwise,}
              \end{cases}
\end{equation*}
with $\sign : \mathbb R \to 2^{\mathbb R}$ defined as
\begin{equation*}
   \sign( x ) := \begin{cases}
                     -1      & \text{if}\quad x < 0\\
                     [-1, 1] & \text{if}\quad x = 0\\
                      1      & \text{if}\quad x > 0.
                 \end{cases}
\end{equation*}
Thus, the derivative of $h$ is uniformly bounded by $| h'( x ) | \leq 2\Delta$, is continuous as long as we use $\Delta > 0$, and, since $h'( x )$ is nondecreasing, $h$ is convex. Notice that $h$ is the well known Huber function~\cite{hub64}.

%

In order to simplify the notation, we introduce the function $\vect h' : \mathbb R^m \to \mathbb R^m$ given as:
\begin{equation*}
   \vect h'( \vect x ) := \left(\begin{matrix}
                                   h'( x_1 )\\
                                   h'( x_2 )\\
                                   \vdots\\
                                   h'( x_m )
                               \end{matrix}\right).
\end{equation*}
Under this notation, a straightforward calculation leads to
\begin{equation*}
   \nabla f_0( \vect x ) =\frac12 R^T\vect h'( R\vect x - \vect b ).
\end{equation*}
The computationally expensive parts of this expression are products of the form $R\vect x$ and $R^T\vect b$. In our case, because of image resolution and dataset size, as explained in the previous Subsection, matrix $R$ has dimensions of over $4\cdot 10^5$ lines by $4\cdot10^6$ columns.

In order to obtain a computationally viable algorithm, these matrix-vector products must be amenable to fast computation, and, in fact, there is a very effective approach to it based in a relation between the one-dimensional Fourier Transform of the data and one ``slice'' of the two-dimensional Fourier Transform of the original image. Recall that we denote our desired, unknown, image as $g : \mathbb R^2 \to \mathbb R_+$ and define the projection of $f$ at an angle $t$ as
\begin{equation*}
   p_\theta( t ) := \mathcal R[ g ]( t ).
\end{equation*}
Now, using the hat notation for the Fourier transform of an integrable function $f : \mathbb R^n \to \mathbb C$, as defined as follows, where $\jmath := \sqrt{-1}$:
\begin{equation*}
   \hat f( \vect \omega ) := \mathcal F[ f ]( \vect \omega ) := \int_{\mathbb R^n} f( \vect x ) e^{-\jmath 2\pi\langle \vect\omega, \vect x \rangle} \mathrm d\vect x\text,
\end{equation*}
there holds the so called Fourier-slice theorem~\cite{kas88,nat86}:
\begin{equation*}\label{eq:fourierSlice}
   \hat p_\theta( \omega ) = \hat g\left( \omega\left( \begin{smallmatrix}
                                                            \cos\theta\\
                                                            \sin\theta
                                                         \end{smallmatrix}
                                            \right)\right).
\end{equation*}

This result implies that the action of $R$ can be evaluated first in the Fourier space, an operation which can be computed efficiently by Non-uniform Fast Fourier Transforms (\textsc{nfft})~\cite{fou03} and later translated back to the original feature space by regular Fast Fourier Transforms (\textsc{fft}). We have used the \texttt{pynfft} binding for the \texttt{nfft3} library~\cite{kkp09} in order to compute samples of $\hat p_\theta$ following~\eqref{eq:fourierSlice}, and then used regular inverse \textsc{fft} routines from \texttt{numpy} for the final computation from the Fourier back to the feature space. Fast evaluation of the transpose operation is immediately available from the same set of libraries using the \textsc{fft} from \texttt{numpy} and the transpose of the \textsc{nfft} available from \texttt{nfft3}.

\subsection{Non-Differentiable Primary Objective Function}

Another option to circumvent the non-consistency of~\eqref{eq:linsys} under non-negativity constraints is to use the least 1-norm approach, which is useful because synchrotron illuminated tomographic data contains mostly very mild noise, but also has sparsely distributed highly perturbed points for example caused by small dust particles and other detector failures such as those caused by defective scintillator crystals and other mechanical, thermal or electronic causes~\cite{mro14}.

Therefore, our second option for primary optimization problem was given by
\begin{equation}\label{eq:leastn1}
   X_1 = \argmin_{\vect x \in \mathbb R_+^n} \ell( \vect x ) := \| R\vect x - \vect b \|_1.
\end{equation}
Where, now, the primary objective function naturally has an everywhere bounded subdifferential and, as is well known, the $\| \cdot \|_1$ is more forgiving to sparse noise than is the euclidean norm~\cite{dol92,sas86,tbm79,clm73}.

\subsection{Secondary Objective Functions}\label{subsec:secobj}

For the purpose of selecting one among the solutions of~\eqref{eq:leastsq} or~\eqref{eq:leastn1}, we will consider two particular cases of the bilevel program~\eqref{eq:bilevel}.

\subsubsection{Haar 1-Norm}\label{subsubsec:haar}

Here the secondary objective function $f_1$ is given by
\begin{equation*}
   f_{\text{Haar}}( \vect x ) := \| H\vect x \|_1,
\end{equation*}
with $H \in \mathbb R^{n \times n}$ orthonormal, that is, satisfies $H^TH = I$, where $I$ is the identity matrix and superscript $T$ indicates transposition. Transformation $H$ is usually a sparsefying transform, in our case the Haar transform, and one is looking for a sparse (in some cases the sparsest~\cite{crt06,crt06b,don06,don06b,don06c}), in the $H$-transformed space, optimizer of $f_0$ over $X_0$.

Notice that for this particular function we have
\begin{equation}\label{eq:subdiffHaar}
   \partial f_{\text{Haar}}( \vect x ) = H^T\vect\sign( H\vect x ),
\end{equation}
where the set-valued function $\vect\sign : \mathbb R^n \to 2^{\mathbb R^n}$ is given by
\begin{equation*}
   \vect\sign( \vect v ) := \{ \vect x \in \mathbb R^n : x_i \in \sign( v_i ) \}.
\end{equation*}

Let us now consider the soft-thresholding operator, given
componentwise as
\begin{equation*}
   \bigl( \text{\textsc{st}}_\mu( \vect x ) \bigr)_i := 
      \begin{cases}
         x_i + \mu & \text{if}\quad x_i < -\mu\\
         0         & \text{if}\quad x \in [ -\mu, \mu ]\\
         x_i - \mu & \text{if}\quad x_i > \mu.
      \end{cases}
\end{equation*}
We then define an intermediary optimality operator as
\begin{equation}\label{eq:HST}
   \mathcal N_{f_{\text{Haar}}}( \mu, \vect x ) := H^T\text{\textsc{st}}_\mu( H\vect x ).
\end{equation}
A straightforward computation convinces us that
\begin{equation}\label{eq:HSTasSubgrad}
   \mathcal N_{f_{\text{Haar}}}( \mu, \vect x ) = \vect x - \mu \tilde\nabla f_1( \overline{\vect x} ),
\end{equation}
where
\begin{equation*}
   ( H\overline{\vect x} )_i = \begin{cases}
                        ( H\vect x )_i - \sign\bigl( ( H\vect x )_i \bigr)\mu & \text{if}\quad | ( H\vect x )_i | > \mu\\
                        0   & \text{if}\quad | ( H\vect x )_i | \leq \mu
                     \end{cases}.
\end{equation*}
This means that $\overline{\vect x} = \mathcal N_{f_{\text{Haar}}}( \mu, \vect x )$, and we have denoted $\tilde\nabla f_1( \overline{\vect x} ) \in \partial f_{\text{Haar}}( \overline{\vect x} )$.
Now we proceed:
\begin{equation}\label{eq:hstDiffBound}
   \begin{split}
      \| \mathcal N_{f_{\text{Haar}}}( \mu, \vect x ) - \vect y \|^2 &{}= \| \vect x - \mu \tilde\nabla f_{\text{Haar}}( \overline{\vect x} ) - \vect y \|^2\\
      &{} = \| \vect x - \vect y \|^2 - 2\mu\langle \tilde\nabla f_{\text{Haar}}( \overline{\vect x} ), \vect x - \vect y \rangle + \| \mu \tilde\nabla f_{\text{Haar}}( \overline{\vect x} ) \|^2\\
      &{} = \| \vect x - \vect y \|^2 - 2\mu\langle \tilde\nabla f_{\text{Haar}}( \overline{\vect x} ), \overline{\vect x} - \vect y \rangle \\
      &\qquad\qquad\qquad {}- 2\mu\langle \tilde\nabla f_{\text{Haar}}( \overline{\vect x} ), \vect x - \overline{\vect x} \rangle + \| \mu \tilde\nabla f_{\text{Haar}}( \overline{\vect x} ) \|^2
   \end{split}
\end{equation}
and we then can observe from~\eqref{eq:subdiffHaar} that $\partial f_1$ is bounded in a way such that $\| \tilde \nabla f_1( \vect x ) \| \leq \sqrt n$ for every $\vect x$, and from~\eqref{eq:hstDiffBound} that $\| \vect x - \overline{\vect x} \| \leq \mu \sqrt n$ for every $\vect x$. Using this facts followed by the subgradient inequality we have
\begin{equation}\label{eq:propOptHaarSoftThres}
   \begin{split}
      \| \mathcal N_{f_{\text{Haar}}}( \mu, \vect x ) - \vect y \|^2 &{}\leq \| \vect x - \vect y \|^2 - 2\mu\langle \tilde\nabla f_{\text{Haar}}( \overline{\vect x} ), \overline{\vect x} - \vect y \rangle + 3\mu^2n\\
      &{}\leq \| \vect x - \vect y \|^2 - 2\mu\bigl( f_{\text{Haar}}( \overline{\vect x} ) - f_1( \vect y )\bigr) + 3\mu^2n.
   \end{split}
\end{equation}
From where we see that Property~\ref{prop:propopt1} is satisfied with $\rho_1( \mu, k ) = 3\mu n$. Furthermore, since for every $\vect x$ we have $\|\tilde\nabla f_{\text{Haar}}( \vect x )\| \leq \sqrt n$, formulation~\eqref{eq:HSTasSubgrad} of $\mathcal O_{f_1}$ implies that this operator satisfies Property~\ref{prop:propoptbound} with $\gamma = \sqrt n$.

It is worth remarking that the soft-thresholding is an example of a proximal operator,
\begin{equation*}
   \mathcal N_{f_{\text{Haar}}}( \mu, \vect x ) = \argmin_{\vect y \in \mathbb R^n}\left\{ \mu\| H \vect y \|_ 1 + \frac12\| \vect x - \vect y \|^2 \right\}.
\end{equation*}
In fact, it is possible to carry out an argumentation similar to the one that lead to inequality~\eqref{eq:propOptHaarSoftThres} above for more general proximal operators, because they too can be interpreted as examples of implicit subgradient algorithms or, alternatively, as appropriately controlled $\epsilon$-subgradient methods~\cite{col93}.

\subsubsection{Total Variation}\label{subsubsec:tv}

Another function we have used to select among the primary problem optimizers is the Total Variation~\cite{rof92}:
\begin{equation*}
   f_{\text{\textsc{tv}}}( \vect x ) := \sum_{i = 1}^{\sqrt n}\sum_{j = 1}^{\sqrt n}\sqrt{( x_{i, j} - x_{i - 1, j} )^2 + ( x_{i, j} - x_{i, j - 1} )^2},
\end{equation*}
where we have assumed that $n$ is a perfect square, which is true for our reconstructed images, the lexicographical notation:
\begin{equation*}
   x_{i,j} := x_{i + ( j - 1 )\sqrt n}, \quad (i, j) \in \{ 1, 2, \dots, \sqrt{n} \}^2
\end{equation*}
and also the following boundary conditions:
\begin{equation*}
   x_{0, j} = x_{\sqrt{n}, j}, \quad x_{i, 0} = x_{i, \sqrt{n}}, \quad (i, j) \in \{ 1, 2, \dots, \sqrt{n} \}^2.
\end{equation*}

The $f_{\text{\textsc{tv}}}$ subdifferential is bounded and, therefore, application of a subgradient step would satisfy the required operator properties. However given the low computational time of the $f_{\text{\textsc{tv}}}$ subgradient step, it is reasonable to apply it several times instead of a single pass. In this case we can see that the resulting operation would still satisfy the required properties, as follows. Let us denote by $\hat{\mathcal O}_f^J$ the following operator:
\begin{equation}\label{eq:OJ}
   \begin{split}
      \vect x^{( 0 )} & {}:= \vect x\text;\\
      \vect x^{( i )} & {}:= \tilde{\mathcal O}_f( \lambda / i, \vect x^{( i - 1 )} )\text,\quad i \in \{ 1, 2, \dots, J \}\text;\\
      \hat{\mathcal O}_f^J( \lambda, \vect x ) & {}:= \vect x^{( J )}\text.
   \end{split}
\end{equation}
That is, the $J$-fold repetition of $\tilde{\mathcal O}_f$, with diminishing stepsizes. For this kind of operator there holds the following:

\begin{lemma}
   Suppose that $f$ has a bounded subgradient and that $\tilde{\mathcal O}_f$ satisfies Properties~\ref{prop:propopt1}~and~\ref{prop:propoptbound}. Then $\hat{\mathcal O}_f^J$ also satisfies Properties~\ref{prop:propopt1}~and~\ref{prop:propoptbound}.
\end{lemma}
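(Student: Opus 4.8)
The plan is to regard $\hat{\mathcal O}_f^J$ as the composition of the $J$ single steps $\vect x^{(i)} = \tilde{\mathcal O}_f(\lambda / i, \vect x^{(i - 1)})$ from~\eqref{eq:OJ} and to propagate Properties~\ref{prop:propopt1}~and~\ref{prop:propoptbound} through this composition. Property~\ref{prop:propoptbound} is the easy part: applying it to each individual step gives $\| \vect x^{(i)} - \vect x^{(i - 1)} \| \leq ( \lambda / i )\gamma$, and summing with the triangle inequality yields $\| \vect x - \hat{\mathcal O}_f^J( \lambda, \vect x ) \| \leq \lambda\gamma\sum_{i = 1}^J 1 / i =: \lambda\gamma H_J$, so that Property~\ref{prop:propoptbound} holds for $\hat{\mathcal O}_f^J$ with constant $\gamma H_J$. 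As a byproduct, the same estimate gives $\| \vect x^{(i)} - \vect x^{(J)} \| \leq \lambda\gamma H_J$ for every $i \leq J$, a bound that will be used below.

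For Property~\ref{prop:propopt1} I would fix $\vect y$, write the inequality of Property~\ref{prop:propopt1} for $\tilde{\mathcal O}_f$ at each step, namely $\| \vect x^{(i)} - \vect y \|^2 \leq \| \vect x^{(i - 1)} - \vect y \|^2 - \beta( \lambda / i )\bigl( f( \vect x^{(i)} ) - f( \vect y ) \bigr) + ( \lambda / i )\rho( \lambda / i, k )$, and sum over $i = 1, \dots, J$. The left-hand sides telescope, leaving $\| \hat{\mathcal O}_f^J( \lambda, \vect x ) - \vect y \|^2 \leq \| \vect x - \vect y \|^2 - \beta\sum_{i = 1}^J ( \lambda / i )\bigl( f( \vect x^{(i)} ) - f( \vect y ) \bigr) + \sum_{i = 1}^J ( \lambda / i )\rho( \lambda / i, k )$.

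The one point requiring care — and hence the main obstacle — is that this telescoped inequality measures descent against the \emph{intermediate} values $f( \vect x^{(i)} )$ rather than against $f$ evaluated at the output $\hat{\mathcal O}_f^J( \lambda, \vect x ) = \vect x^{(J)}$; this is exactly the gap handled in the proof of Proposition~\ref{prop:oneIsOther}, and I would close it the same way. Using the assumed subgradient bound $\| \tilde\nabla f \| \leq M$ together with $\| \vect x^{(i)} - \vect x^{(J)} \| \leq \lambda\gamma H_J$, convexity gives $f( \vect x^{(i)} ) \geq f( \vect x^{(J)} ) - M\lambda\gamma H_J$ for each $i$. Substituting this into the sum (each coefficient $\lambda / i$ being nonnegative) produces $\beta\sum_{i = 1}^J ( \lambda / i )\bigl( f( \vect x^{(i)} ) - f( \vect y ) \bigr) \geq \beta H_J\lambda\bigl( f( \vect x^{(J)} ) - f( \vect y ) \bigr) - \beta M\gamma H_J^2\lambda^2$, and plugging back in shows that $\hat{\mathcal O}_f^J$ satisfies Property~\ref{prop:propopt1} with $\beta$ replaced by $\beta H_J$ and with error term $\hat\rho( \lambda, k ) := \beta M\gamma H_J^2\lambda + \sum_{i = 1}^J ( 1 / i )\rho( \lambda / i, k )$. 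Finally I would remark that $\hat\rho$ inherits the asymptotic behavior of $\rho$ relevant to the convergence theorems — if $\rho( \lambda, k ) \to 0$ then $\hat\rho( \lambda, k ) \to 0$, and if $\rho( \lambda, k )$ is bounded by a constant times $\lambda$ then so is $\hat\rho$ — so the composite operator remains admissible in all our results. The case $\lambda = 0$ is trivial, since then Property~\ref{prop:propoptbound} forces each $\vect x^{(i)}$ to coincide with $\vect x$.
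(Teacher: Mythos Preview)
Your proof is correct and follows essentially the same approach as the paper: telescope Property~\ref{prop:propopt1} over the $J$ substeps, then use subgradient boundedness together with Property~\ref{prop:propoptbound} to replace each $f(\vect x^{(i)})$ by $f(\vect x^{(J)})$ at the cost of an $O(\lambda)$ error. The only difference is that the paper uses the sharper intermediate bound $\| \vect x^{(i)} - \vect x^{(J)} \| \leq \lambda\gamma\sum_{j = i + 1}^J 1/j$ in place of your uniform $\lambda\gamma H_J$, giving a slightly smaller constant in the resulting error term, but this is immaterial for the conclusion.
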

\begin{proof}
   Repeated use of Property~\ref{prop:propoptbound} and the triangle inequality leads to
   \begin{equation*}
      \| \hat{\mathcal O}_f^J( \lambda, \vect x ) - \vect x \| \leq \lambda\gamma\sum_{i = 1}^J\frac 1i\text.
   \end{equation*}
   More generally, telescoping from $j$ to $l$ we have
   \begin{equation}\label{eq:telescopingPropOpBound}
      \| \vect x^{( j )} - \vect x^{( l )} \| \leq \lambda\gamma\sum_{i = j + 1}^l\frac 1i\text.
   \end{equation}
   Now, repeated use of Property~\ref{prop:propopt1} gives
   \begin{equation*}
      \begin{split}
         \| \hat{\mathcal O}_f^J( \lambda, \vect x ) - \vect y \|^2 & {}\leq \| \vect x - \vect y \|^2 - 2\lambda\sum_{i = 1}^J\frac 1i\bigl( f( \vect x^{( i )} ) - f( \vect y ) \bigr) + \lambda\sum_{i = 1}^J\frac 1i\rho( \lambda / i, k )\\
            &{} = \| \vect x - \vect y \|^2 - 2\lambda\sum_{i = 1}^J\frac 1i\bigl( f( \vect x^{( J )} ) - f( \vect y ) \bigr) + \lambda\sum_{i = 1}^J\frac 1i\rho( \lambda / i, k )\\
            &\quad\qquad\qquad\qquad{} + 2\lambda\sum_{i = 1}^J\frac 1i\bigl( f( \vect x^{( J )} ) - f( \vect x^{( i )} ) \bigr).
      \end{split}
   \end{equation*}
   Then, by subgradient boundedness and from~\eqref{eq:telescopingPropOpBound}, we have:
   \begin{equation*}
      \begin{split}
         \| \hat{\mathcal O}_f^J( \lambda, \vect x ) - \vect y \|^2 & {} \leq \| \vect x - \vect y \|^2 - 2\lambda\sum_{i = 1}^J\frac 1i\bigl( f( \vect x^{( J )} ) - f( \vect y ) \bigr) + \lambda\sum_{i = 1}^J\frac 1i\rho( \lambda / i, k )\\
            &\quad\qquad\qquad\qquad{} + 2\lambda M\sum_{i = 1}^J\frac 1i\| \vect x^{( J )} - \vect x^{( i )} \|\\
            & {} \leq \| \vect x - \vect y \|^2 - 2\lambda\sum_{i = 1}^J\frac 1i\bigl( f( \vect x^{( J )} ) - f( \vect y ) \bigr) + \lambda\sum_{i = 1}^J\frac 1i\rho( \lambda / i, k )\\
            &\quad\qquad\qquad\qquad{} + 2\lambda^2\gamma M\sum_{i = 1}^J\frac 1i\sum_{j = i + 1}^J\frac 1j.
      \end{split}
   \end{equation*}
   Therefore, $\hat{\mathcal O}_f^J$ also satisfies Property~\ref{prop:propoptbound}, with error term given by
   \begin{equation*}
      \sum_{i = 1}^J\frac 1i\rho( \lambda / i, k ) + 2\lambda\gamma M\sum_{i = 1}^J\frac 1i\sum_{j = i + 1}^J\frac 1j.\qed
   \end{equation*}
\end{proof}

Notice that if the original error term $\rho( \lambda, k )$ for $\tilde{\mathcal O}_f$ is $O( \lambda )$, then so is the one of the iterated operator $\hat{\mathcal O}_f^J$. This remark will be useful when considering convergence of the incremental algorithm to be presented in Subsection~\ref{subsec:algoinc}.

\section{Numerical Experimentation}

\subsection{Smooth Primary Objective Function}

In the present Subsection, we report the results obtained using Algorithm~\ref{algo:fiba} in order to approximately solve the optimization problem
\begin{equation*}
   \begin{split}
      \min & \quad f_1( \vect x )\\
      \st  & \quad \vect x\in \argmin_{\vect y \in \mathbb R_+^n} \| R\vect y - \vect b \|^2\text,
   \end{split}
\end{equation*}
with both $f_1( \vect x ) = \| H \vect x \|_1$ and $f_1( \vect x ) = f_{\text{\textsc{tv}}}( \vect x )$. Below we describe the parameter selection and explain the rationale supporting each choice.

The bilevel algorithms were compared against the Fast Iterative Soft-Thresholding Algorithm (\textsc{fista})~\cite{bet09} for the function
\begin{equation*}
   \| R\vect x - \vect b \|^2 + \iota_{\mathbb R_+^n}( \vect x ),
\end{equation*}
where $\iota$ is the indicator function~\eqref{eq:iota}. We have used the starting image described in the next paragraph. We iterated \textsc{fista} with a constant stepsize $\lambda = 3$, for the reasons explained in the paragraph following the next.

\paragraph{Starting image} The initial guess $\vect x_0$ used in the experiments presented in this subsection was the zero image.

\paragraph{Stepsize sequences} The sequence $\{\lambda_k\}$ was set to be
\begin{equation}\label{eq:lambdaDiff}
   \lambda_k = \frac{\lambda}{(k + 1)^{0.1}},
\end{equation}
where $\lambda$ was chosen to be just small enough so that the squared residual of the first iterations of the one-level algorithm is decreasing. In the examples, $\lambda = 3$ worked well. This choice for $\lambda$ was noticed to provide the fastest primary objective function decrease during the iterations when using stepsize sequences of the form \eqref{eq:lambdaDiff} above. Sequence $\{ \eta_k \}$ was given by
\begin{equation*}
   \eta_k = \frac{10^6}{( k + 1 )^{0.1}},
\end{equation*}
which was chosen to attain very high values in order not to negatively influence convergence speed by limiting the values of $\lambda_{i_k}$ or $\xi_k$ (both of which remained constant throughout the iterations of Algorithm~\ref{algo:fiba}, without detected need for decreasing). Finally, the stepsize sequence $\{ \mu_k \}$ was given by
\begin{equation*}
   \mu_k = \frac{\mu}{k + 1}.
\end{equation*}
Now, the initial stepsize $\mu$ has been selected in order to make the first pair of subiterations to have a specific relative strength. The precise procedure was the following: we first computed $\vect x_{1 / 3}$ as usual, following Algorithm~\ref{algo:fiba} and using the already chosen $\lambda_0$. Then, a tentative subiteration $\tilde{\vect x}_{2 / 3}$ is computed using a tentative stepsize $\tilde{\mu}_0 = 1$. Finally, the value to be used as starting stepsize is computed by
\begin{equation}\label{eq:mu0Smooth}
   \mu = 10^{-2} \frac{\|\vect x_{0} - \vect x_{1 / 3}\|}{\|\vect x_{1 / 3} - \tilde{\vect x}_{2 / 3}\|}.
\end{equation}
So, the step given by the first subiteration for the primary problem is about $10^2$ times the step given by the subiteration for the secondary optimization problem. This value provided a good compromise between primary problem convergence and secondary function value during the iterations.

\paragraph{Secondary Operators} When the problem being solved had $f_1 = f_{\text{Haar}}$, we used \textsc{fiba} with $\mathcal O_{f_1} = \mathcal N_{f_{\text{Haar}}}$ according to~\eqref{eq:HST}. If the problem had $f_1 = f_{\text{\textsc{tv}}}$, then we have used $\mathcal O_{f_1} = \hat{\mathcal O}_{f_{\text{\textsc{tv}}}}^{10}$ as defined in~\eqref{eq:OJ} with $\tilde{\mathcal O}_{f_{\text{\textsc{tv}}}}( \lambda, \vect x ) = \vect x - \lambda \tilde\nabla f_{\text{\textsc{tv}}}( \vect x )$ where $\tilde\nabla f_{\text{\textsc{tv}}}( \vect x ) \in \partial f_{\text{\textsc{tv}}}( \vect x )$.

\paragraph{Algorithm Convergence} Because all entries of the matrix $R$ are nonnegative and because every pixel of the reconstructed image is crossed by several rays during acquisition, the model has the property that $X_1$ is bounded. Therefore, given the subgradient boundedness of all involved objective functions and the fact that both secondary objective functions $f_{\text{Haar}}$ and $f_{\text{\textsc{tv}}}$ are bounded from below, Theorem~\ref{theo:convFIBA} could be applied to show that Algorithm~\ref{algo:fiba} converges in the cases covered in the present subsection, except for two issues. The first problem is that $\lambda_0$ may not be smaller than $1/L_0$, but we have observed during algorithm evaluation that the alternative sufficient decrease criterion~\eqref{eq:upperBoundLips} was satisfied in every iteration, which suffices, instead, for convergence. Furthermore, boundedness of the iterates was also observed, ensuring convergent behavior without forceful truncation.

\paragraph{Numerical Results} In the plots that follow, we refer to \textsc{fiba} when applied to the model with $f_1 = f_{\text{Haar}}$ as \textsc{fiba-h} and when applied to the model with $f_1 = f_{\text{\textsc{tv}}}$ as \textsc{fiba-tv}. Figure~\ref{fig:phasePlaneDiffProb} shows that, as expected, the bilevel approach influences the iterates of the method, thereby resulting in lower secondary objective function value throughout the iterations. Figure~\ref{fig:iterEvolutionDiffProb}, on the other hand and also unsurprisingly, shows that the convergence speed for the primary optimization problem is reduced as the secondary optimization step influences the iterations. Figure~\ref{fig:eggImages} displays some resulting images, all with the same prescribed primary objective function value, from the algorithms and Figure~\ref{fig:eggProfiles} brings a plot of the profiles across the same line of each image.

We can see the benefits of the enforced smoothing, in particular the good quality of the Total Variation image. The advantage of the bilevel technique here is that the regularization level is chosen by a stopping criterion instead of a parameter in an optimization problem. This latter situation implies that more computational effort would be required since reconstruction for each tentative parameter value would require the equivalent to a large fraction of the computation effort of performing some iterations of the bilevel techniques. We further would like to remark that although heuristic, the accelerating scheme does substantially speed up the method. While we do not show these results in the present paper, when using a non-accelerated technique (i.e, $\eta_k = 0$), the convergence speed is reduced to a rate similar to the \textsc{ista}~\cite{bet09} method.

\begin{figure}
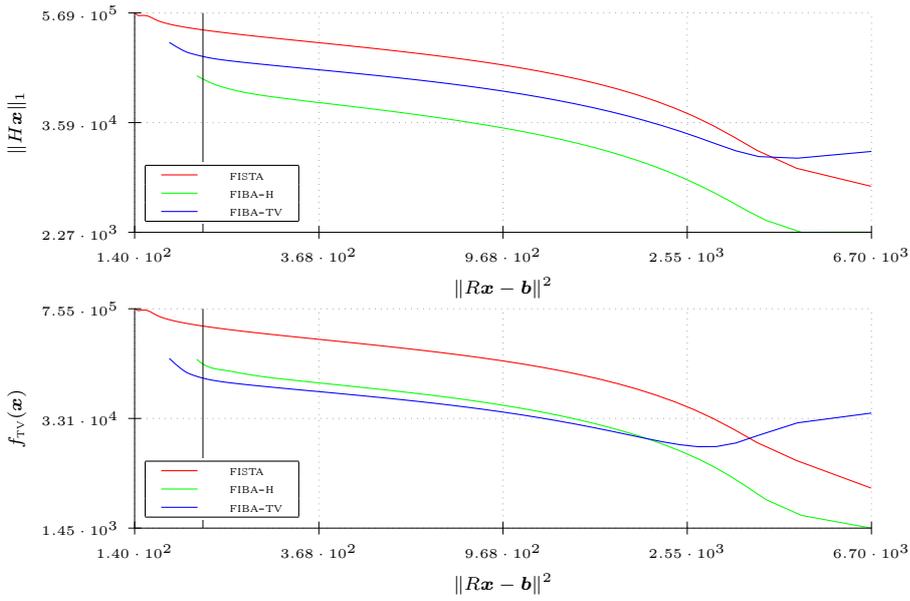

   \input{egg_rec_axisdata.tex}%
   \centering%
   \setlength{\grftotalwidth}{\textwidth}%
   \begin{grfgraphic}[0.3]{%
      \grfxaxis{\fAxfBxaxisValues}{\fAxfBxaxisLabels}%
      \grfyaxis{\fAxfByaxisValues}{\fAxfByaxisLabels}%
      \grfxlabel{\scriptsize$\| R\vect x -  \vect b \|^2$}%
      \grfylabel[L]{\scriptsize$\| H\vect x \|_1$}%
   }%
      \grfxgrid[grfaxisstyle,gray,dotted]{\fAxfBxaxisValues}{\fAxfBxaxisValues}%
      \grfygrid[grfaxisstyle,gray,dotted]{\fAxfByaxisValues}{\fAxfByaxisValues}%
      \draw[red] plot file {egg_fistap_f0xf1.tpl};%
      \draw[green] plot file {egg_bilevel_f0xf1.tpl};%
      \draw[blue] plot file {egg_bilevel_tv_f0xf1.tpl};%
      \draw[grfaxisstyle] (\targetA,0.0) -- (\targetA,1.0);
      {%
         \tiny%
         \grftablegend[bl,fill=white]{|cl|}{%
               \hline%
               \grflegsymbol[2em]{\grflegline[red]} & \textsc{fista}\\
               \grflegsymbol[2em]{\grflegline[green]} & \textsc{fiba-h}\\
               \grflegsymbol[2em]{\grflegline[blue]} & \textsc{fiba-tv}\\
               \hline%
         }%
      }%
   \end{grfgraphic}\\%
   \input{egg_rec_tv_axisdata.tex}%
   \begin{grfgraphic}[0.3]{%
      \grfxaxis{\fAxTVxaxisValues}{\fAxTVxaxisLabels}%
      \grfyaxis{\fAxTVyaxisValues}{\fAxTVyaxisLabels}%
      \grfxlabel{\scriptsize$\| R\vect x -  \vect b \|^2$}%
      \grfylabel[L]{\scriptsize$f_{\text{\textsc{tv}}}( \vect x )$}%
   }%
      \grfxgrid[grfaxisstyle,gray,dotted]{\fAxTVxaxisValues}{\fAxTVxaxisValues}%
      \grfygrid[grfaxisstyle,gray,dotted]{\fAxTVyaxisValues}{\fAxTVyaxisValues}%
      \draw[red] plot file {egg_fistap_f0xTV.tpl};%
      \draw[green] plot file {egg_bilevel_haar_f0xTV.tpl};%
      \draw[blue] plot file {egg_bilevel_f0xTV.tpl};%
      \draw[grfaxisstyle] (\targetA,0.0) -- (\targetA,1.0);
      {%
         \tiny%
         \grftablegend[bl,fill=white]{|cl|}{%
               \hline%
               \grflegsymbol[2em]{\grflegline[red]} & \textsc{fista}\\
               \grflegsymbol[2em]{\grflegline[green]} & \textsc{fiba-h}\\
               \grflegsymbol[2em]{\grflegline[blue]} & \textsc{fiba-tv}\\
               \hline%
         }%
      }%
   \end{grfgraphic}%
   \caption{Logarithmic-scale plots of the trajectories followed by the three studied algorithms over the ``phase-plane'' described by $f_0 \times f_1$. The solid vertical line depicts the residual value of the images shown at Figure~\ref{fig:eggImages}. In both graphics, horizontal scale is $\| R\vect x - \vect b \|^2$. On top, vertical axis depicts $\| H\vect x \|_1$. On bottom, curve height is proportional to $ f_{\text{\textsc{tv}}}( \vect x )$.}%
   \label{fig:phasePlaneDiffProb}%
\end{figure}

\begin{figure}
   \input{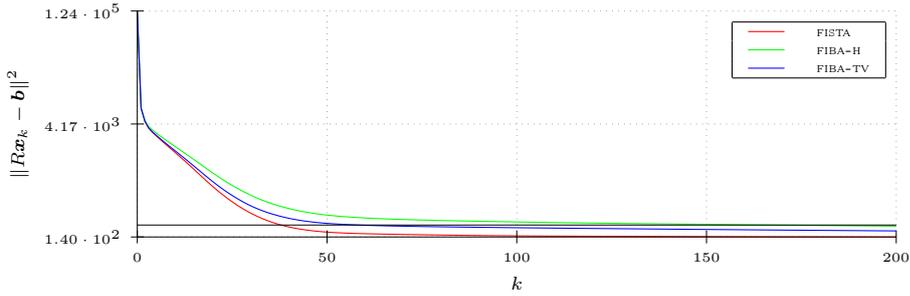}%
   \centering%
   \setlength{\grftotalwidth}{\textwidth}%
   \begin{grfgraphic}[0.3]{%
      \grfxaxis{\iterxfAxaxisValues}{\iterxfAxaxisLabels}%
      \grfyaxis{\iterxfAyaxisValues}{\iterxfAyaxisLabels}%
      \grfylabel[L]{\scriptsize$\| R\vect x_k -  \vect b \|^2$}%
      \grfxlabel{\scriptsize$k$}%
   }%
      \grfxgrid[grfaxisstyle,gray,dotted]{\iterxfAxaxisValues}{\iterxfAxaxisValues}%
      \grfygrid[grfaxisstyle,gray,dotted]{\iterxfAyaxisValues}{\iterxfAyaxisValues}%
      \draw[red] plot file {egg_fistap_iterxf0.tpl};%
      \draw[green] plot file {egg_bilevel_haar_iterxf0.tpl};%
      \draw[blue] plot file {egg_bilevel_tv_iterxf0.tpl};%
      \draw[grfaxisstyle] (0.0,\targetA) -- (1.0,\targetA);
      {%
         \tiny%
         \grftablegend[tr,fill=white]{|cl|}{%
               \hline%
               \grflegsymbol[2em]{\grflegline[red]} & \textsc{fista}\\
               \grflegsymbol[2em]{\grflegline[green]} & \textsc{fiba-h}\\
               \grflegsymbol[2em]{\grflegline[blue]} & \textsc{fiba-tv}\\
               \hline%
         }%
      }%
   \end{grfgraphic}%
   \caption{Evolution of primary objective function over iterations. Solid horizontal line: residual value of the images shown in Figure~\ref{fig:eggImages}.}%
   \label{fig:iterEvolutionDiffProb}%
\end{figure}

\begin{figure}
   \input{egg_rec_axisdata.tex}%
   \centering%
   \setlength{\grftotalwidth}{\textwidth}%
   \begin{grfgraphic}{%
      \def\grfxmin{-1.115}\def\grfxmax{2.0}%
      \def\grfymin{-1.5}\def\grfymax{0.605}%
   }%
      \node[anchor=south,rotate=90,inner sep=\grflabelsep] at (-1.0,0.0) {\scriptsize$2048\times2048$};%
      \node[anchor=south,rotate=90,inner sep=\grflabelsep] at (-1.0,-1.0) {\scriptsize$512\times512$};%
      \node[anchor=south,inner sep=\grflabelsep] at (-0.5,0.5) {\scriptsize\textsc{fista}};%
      \node[anchor=south,inner sep=\grflabelsep] at (0.5,0.5) {\scriptsize\textsc{fiba-h}};%
      \node[anchor=south,inner sep=\grflabelsep] at (1.5,0.5) {\scriptsize\textsc{fiba-tv}};%
      \node[inner sep=0pt] at (-0.5,0.0) {\includegraphics[width=\grfxunit]{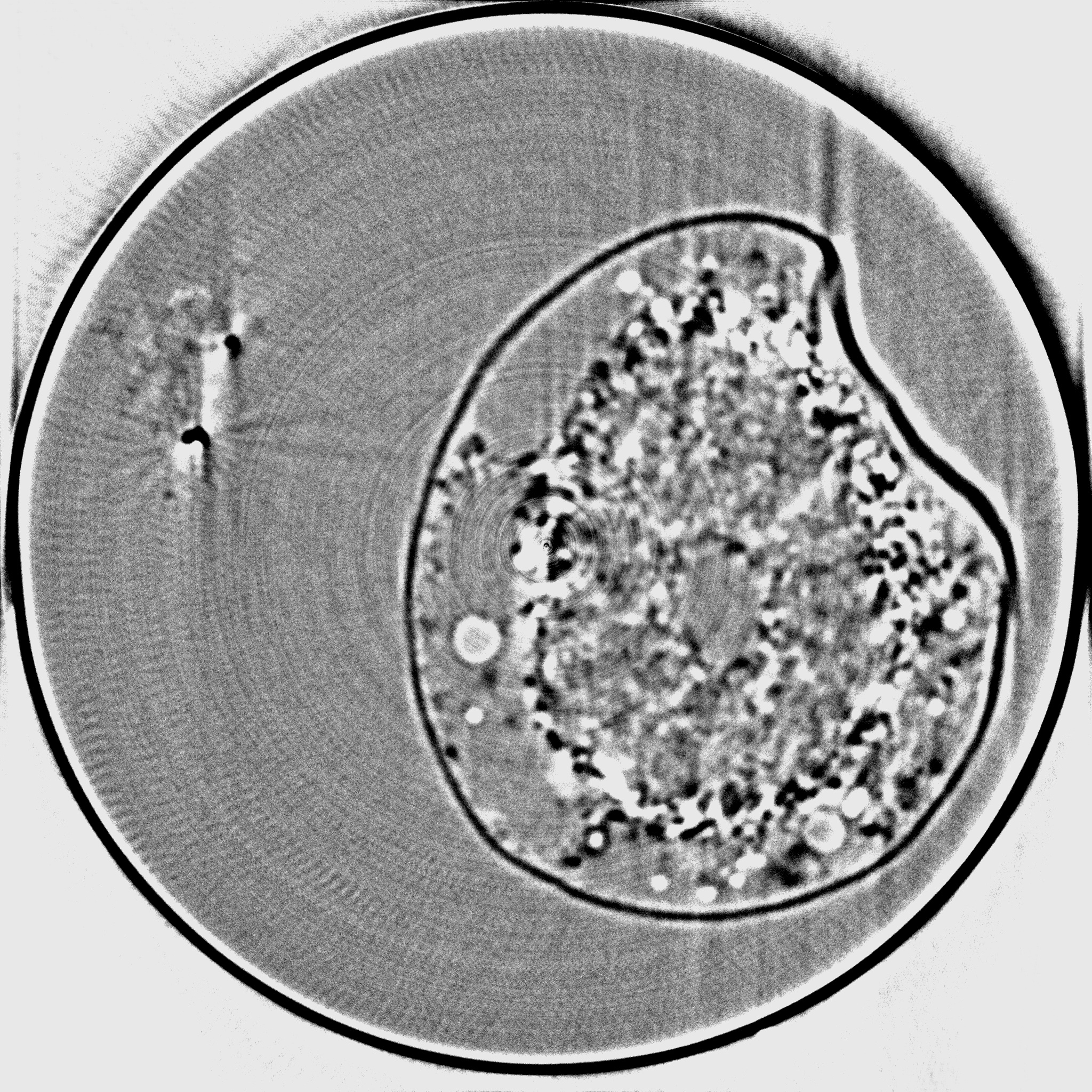}};%
      \node[inner sep=0pt] at ( 0.5,0.0) {\includegraphics[width=\grfxunit]{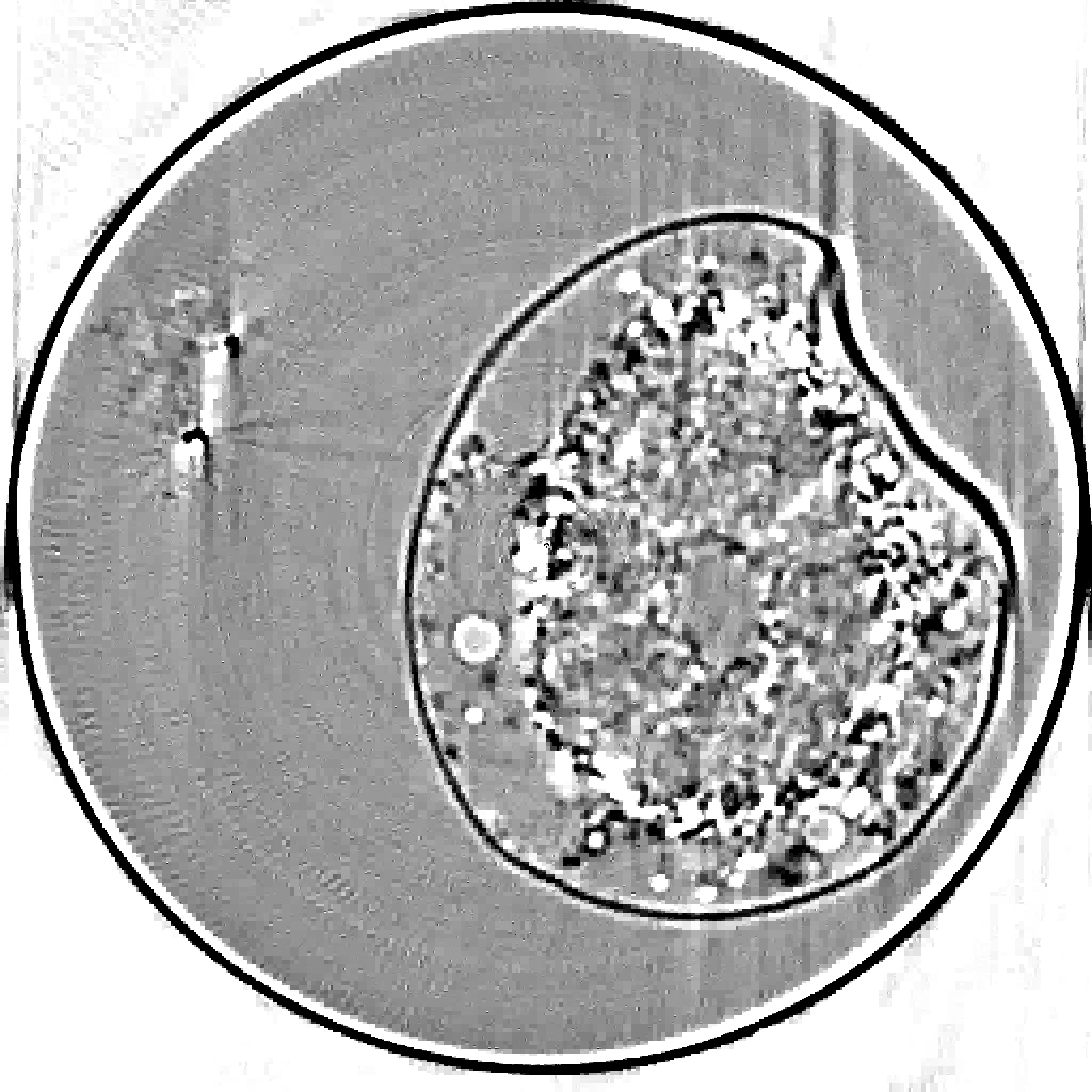}};%
      \node[inner sep=0pt] at ( 1.5,0.0) {\includegraphics[width=\grfxunit]{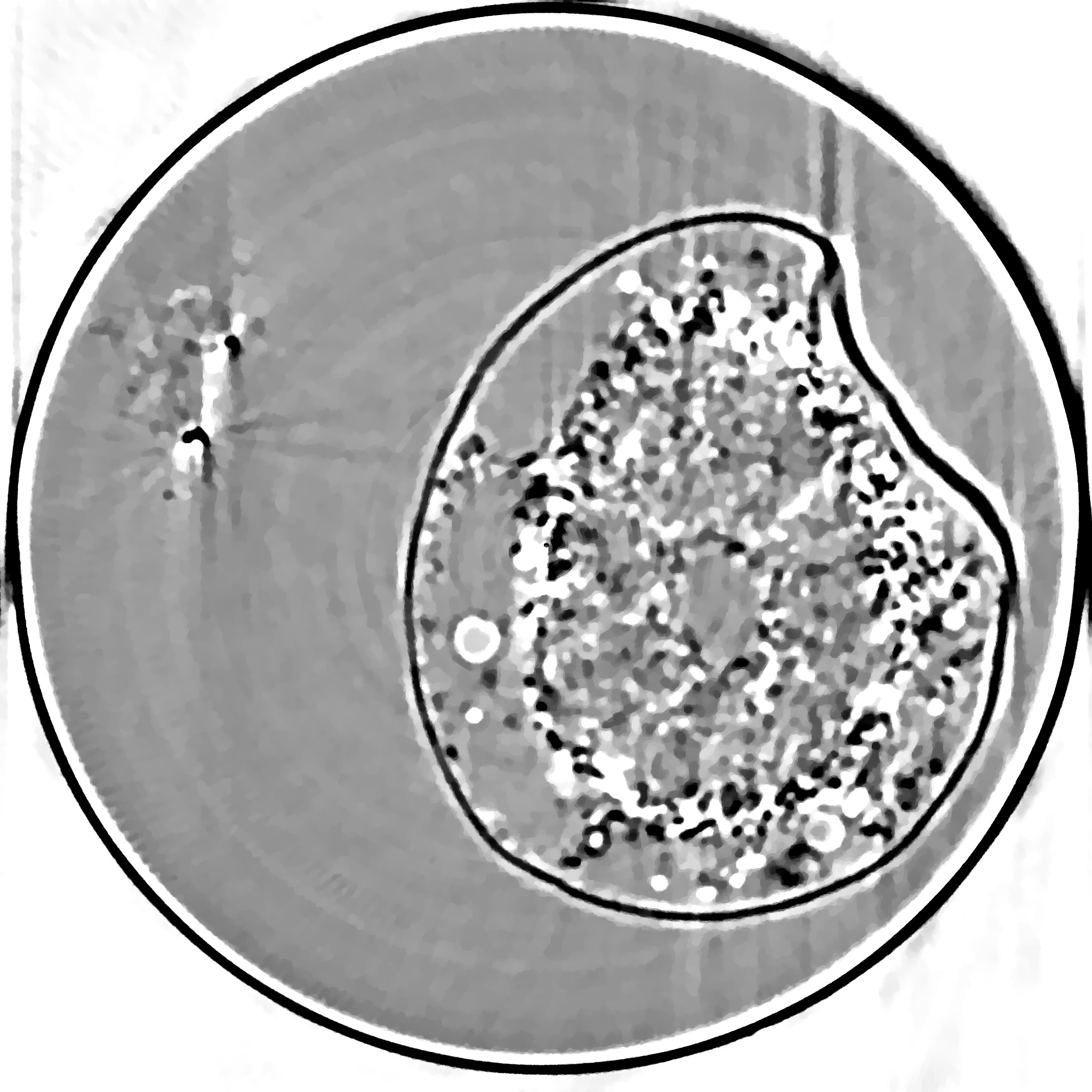}};%
      \draw[grfaxisstyle,red] (-1.0,\eggHprofilePos) -- (0.0,\eggHprofilePos);%
      \draw[grfaxisstyle,green] (0.0,\eggHprofilePos) -- (1.0,\eggHprofilePos);%
      \draw[grfaxisstyle,blue] (1.0,\eggHprofilePos) -- (2.0,\eggHprofilePos);%
      \draw[grfaxisstyle] (-1.0,-0.5) rectangle (0.0,0.5);%
      \draw[grfaxisstyle] (1.0,-0.5) rectangle (0.0,0.5);%
      \draw[grfaxisstyle] (1.0,-0.5) rectangle (2.0,0.5);%
      \node[inner sep=0pt] at (-0.5,-1.0) {\includegraphics[width=\grfxunit]{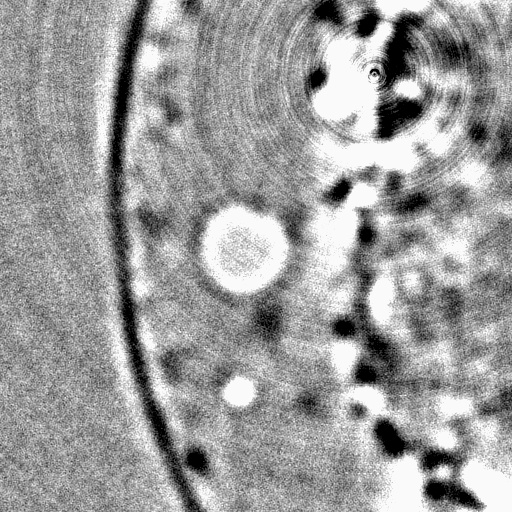}};%
      \node[inner sep=0pt] at ( 0.5,-1.0) {\includegraphics[width=\grfxunit]{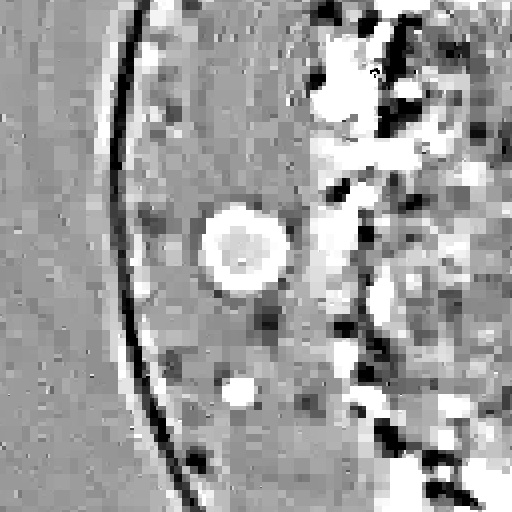}};%
      \node[inner sep=0pt] at ( 1.5,-1.0) {\includegraphics[width=\grfxunit]{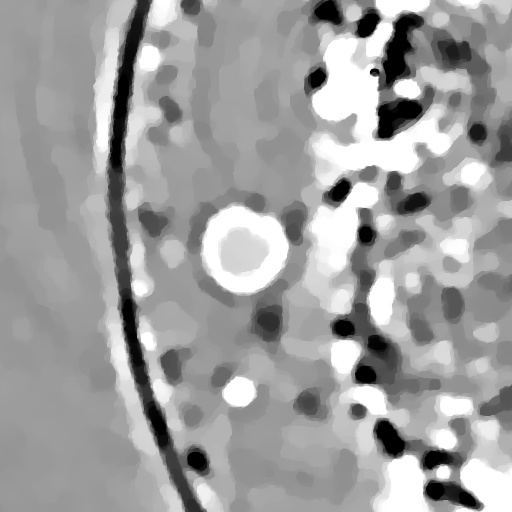}};%
      \draw[grfaxisstyle] (-1.0,-1.5) rectangle (0.0,-0.5);%
      \draw[grfaxisstyle] (1.0,-1.5) rectangle (0.0,-0.5);%
      \draw[grfaxisstyle] (1.0,-1.5) rectangle (2.0,-0.5);%
      \draw[grfaxisstyle] (\eggInsetTLX,\eggInsetTLY) rectangle (\eggInsetBRX,\eggInsetBRY);
      \draw[grfaxisstyle,shift={(1,0)}] (\eggInsetTLX,\eggInsetTLY) rectangle (\eggInsetBRX,\eggInsetBRY);
      \draw[grfaxisstyle,shift={(-1,0)}] (\eggInsetTLX,\eggInsetTLY) rectangle (\eggInsetBRX,\eggInsetBRY);
   \end{grfgraphic}%
   \caption{Reconstructions of fish egg slice from synchrotron radiation transmission data. Top-row: full slice image. Bottom row: details, with location in respective images above shown as a solid black square. The colored solid lines in the top images show the position of the profiles detailed at figure~\ref{fig:eggProfiles}.}\label{fig:eggImages}%
\end{figure}

\begin{figure}
   \input{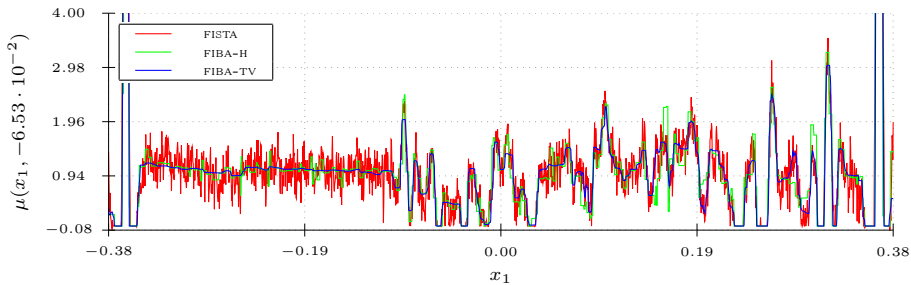}%
   \centering%
   \setlength{\grftotalwidth}{\textwidth}%
   \begin{grfgraphic*}[1.0]{%
      \grfxaxis{\profAxaxisValues}{\profAxaxisLabels}%
      \grfyaxis{\profAyaxisValues}{\profAyaxisLabels}%
      \grfxlabel{\scriptsize$x_1$}%
      \grfylabel[L]{\scriptsize$\mu( x_1, \profApos )$}%
      \def\grfxmin{0.0}\def\grfxmax{1.0}%
   }%
      \grfxgrid[grfaxisstyle,gray,dotted]{\profAxaxisValues}{\profAxaxisValues}%
      \grfygrid[grfaxisstyle,gray,dotted]{\profAyaxisValues}{\profAyaxisValues}%
      \draw[red] plot file {egg_fipa_profileA.tpl};%
      \draw[green] plot file {egg_bilevel_profileA.tpl};%
      \draw[blue] plot file {egg_bilevel_tv_profileA.tpl};%
      {%
         \tiny%
         \grftablegend[tl,fill=white]{|cl|}{%
               \hline%
               \grflegsymbol[2em]{\grflegline[red]} & \textsc{fista}\\
               \grflegsymbol[2em]{\grflegline[green]} & \textsc{fiba-h}\\
               \grflegsymbol[2em]{\grflegline[blue]} & \textsc{fiba-tv}\\
               \hline%
         }%
      }%
   \end{grfgraphic*}%
   \caption{Profiles through the lines indicated in Figure~\ref{fig:eggImages}. It is noticeable the noise suppression characteristics of the Total Variation functional, while retaining image detail.}\label{fig:eggProfiles}%
\end{figure}

\subsection{Non-Differentiable Primary Objective Function}

Now we consider using Algorithm~\ref{algo:iiba} applied to optimization problem
\begin{equation*}
   \begin{split}
      \min & \quad f_1( \vect x )\\
      \st  & \quad \vect x\in \argmin_{\vect y \in \mathbb R_+^n} \| R\vect y - \vect b \|_1\text,
   \end{split}
\end{equation*}
with $f_1( \vect x ) = f_{\text{\textsc{tv}}}( \vect x )$. We do not experiment using $f_1( \vect x ) = \| H \vect x \|_1$ here, we leave it to the simulated experiments presented in the next subsection.

Reconstructions were performed using algorithms where the data was divided in $s$ subsets with $s \in \{1, 2, 4, 8, 16, 32\}$. Each subset was itself composed by data comprising several projection measurements in sequence, i.e., each subset was a vertical stripe of the image\footnote{Coincidently, the vertical colored lines delimit the subsets for the case $s = 4$.} shown in the bottom of Figure~\ref{fig:experimental_Radon}. At each iteration, the sequence of subset processing was selected by a pseudo-random shuffling. This makes the algorithm non-deterministic, but still covered by the theory because every subset was used once in every iteration and each of the corresponding subdifferentials is uniformly bounded. Furthermore, we have observed a consistent behavior among runs and we had not observed a run where sequential data processing led to better convergence than the random ordering.

The various incremental bilevel algorithms were pairwise compared against their pure projected incremental subgradient counterparts, i.e., a variation of Algorithm~\ref{algo:iiba} with $\mu_k \equiv 0$. All of the methods where started with an uniform image as described for the differentiable model. We will denote the bilevel algorithms by \textsc{iiba-}$s$, where $s$ is the number of subsets, and the projected incremental method by \textsc{inc-}$s$.

\paragraph{Starting image} The initial guess $\vect x_0$ used in the experiments of the present section, for all algorithms tested, was a constant image such that $\sum_{i = 1}^m ( R\vect x_0 )_i = \sum_{i = 1}^m b_i$. It is easy to compute the correct constant value $\alpha$ from $\alpha = \sum_{i = 1}^m b_i / \sum_{i = 1}^m ( R\vect 1 )_i$ where $\vect 1$ is the vector, of appropriate dimension, with every coordinate equal to $1$. This choice makes sure that the Radon consistency condition $\sum_{i = 1}^m ( R\vect x )_i = \sum_{i = 1}^m b_i$ is satisfied for the first iteration, potentially avoiding large oscillations in the first steps of the algorithm.

\paragraph{Stepsize sequences} The sequence $\{\lambda_k\}$ for the incremental algorithms with $s$ subsets was set to be
\begin{equation*}
   \lambda_k = \frac{\lambda}{(k + 1)^{\epsilon_s}},
\end{equation*}
where $\lambda$ was of the form
\begin{equation*}
   \lambda = \alpha_s \frac{sf_0( \vect x_0  )}{\|\tilde \nabla f_0( \vect x_0 )\|^2},
\end{equation*}
and $\tilde \nabla f_0( \vect x_0 ) \in \partial f( \vect x_0 )$. The pair $( \alpha_s, \epsilon_s )$ was selected through a simple search procedure as follows. Let us denote by $\vect x_{( s, \alpha, \epsilon )}$ the first iteration to be completed past $4$ seconds of computation time by the pure projected incremental algorithm with $s$ subsets (that is, by \textsc{inc}-$s$) and using parameters $( \alpha, \epsilon )$. Then $( \alpha_s, \epsilon_s )$ was given by
\begin{equation*}
   ( \alpha_s, \epsilon_s ) := \argmin_{ (\alpha, \epsilon ) \in \{ 0.1, 0.2, \dots, 1.0 \} \times \{ 0.5, 0.6, \dots, 0.9 \} } f_0( \vect x_{( s, \alpha, \epsilon )} ).
\end{equation*}
The parameters were only optimized for the non-bilevel case, and the same values were used for the corresponding (with relation to the number of subsets) bilevel algorithm. The second step sequence $\{\mu_k\}$ was prescribed as
\begin{equation*}
   \mu_k = \frac{\mu}{( k + 1 )^{\epsilon_s + 0.1}},
\end{equation*}
with
\begin{equation*}
   \mu = 10^{-1} \frac{\|\vect x_{0} - \vect x_{1 / 3}\|}{\|\vect x_{1 / 3} - \tilde{\vect x}_{2 / 3}\|},
\end{equation*}
where the rationale is the same than in~\eqref{eq:mu0Smooth}, but with target relative importance between the first subiterations of $10$.

\paragraph{Secondary Operators} In the present experiments we have used $\mathcal O_{f_1} = \hat{\mathcal O}_{f_{\text{\textsc{tv}}}}^{5}$ as defined in~\eqref{eq:OJ} with $\tilde{\mathcal O}_{f_{\text{\textsc{tv}}}}( \lambda, \vect x ) = \vect x - \lambda \tilde\nabla f_{\text{\textsc{tv}}}( \vect x )$ where $\tilde\nabla f_{\text{\textsc{tv}}}( \vect x ) \in \partial f_{\text{\textsc{tv}}}( \vect x )$.

\paragraph{Algorithm Convergence} Notice that the stepsizes are of the form
\begin{equation*}
   \lambda_k = \frac{\lambda}{( k + 1 )^\epsilon}\quad\text{and}\quad \mu_k = \frac{\mu}{( k + 1 )^{\epsilon + 0.1}},
\end{equation*}
where $\epsilon \in [0.5, 0.9]$ and $\lambda$ and $\mu$ are nonnegative. It is routine to check that for this range of $\epsilon$ there holds:
\begin{equation*}
   \sum_{k = 0}^\infty \lambda_k = \sum_{k = 0}^\infty \mu_k = \infty, \quad \frac{\mu_k}{\lambda_k} \to 0\quad\text{and}\quad \frac{\lambda_k^2}{\mu_k} \to 0.
\end{equation*}
Also, for the same reasons as in the differentiable primary problem case, the model has the property that $X_1$ is bounded. Given the subgradient boundedness of all involved objective functions and the fact that secondary objective function $f_{\text{\textsc{tv}}}$ is bounded from below, Theorem~\ref{theo:convIIBA} can be applied to prove algorithm convergence.

\begin{figure}
   \input{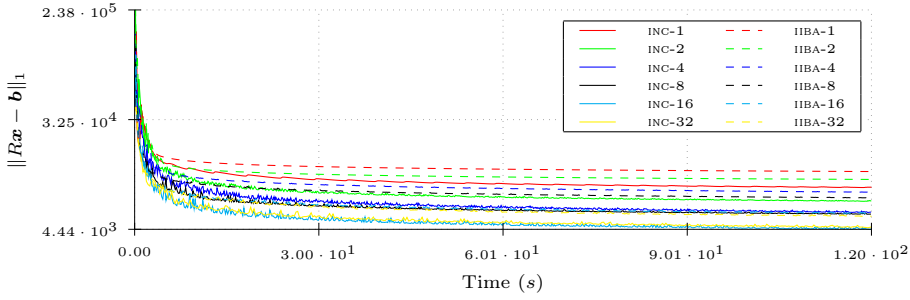}%
   \centering%
   \setlength{\grftotalwidth}{\textwidth}%
   \begin{grfgraphic*}[0.3]{%
      \grfxaxis{\xaxisValues}{\xaxisLabels}%
      \grfyaxis{\yaxisValues}{\yaxisLabels}%
      \grfxlabel{\scriptsize Time ($s$)}%
      \grfylabel[L]{\scriptsize$\| R \vect x - \vect b \|_1$}%
   }%
      \grfxgrid[grfaxisstyle,gray,dotted]{\xaxisValues}{\xaxisValues}%
      \grfygrid[grfaxisstyle,gray,dotted]{\yaxisValues}{\yaxisValues}%
      \draw[red] plot file {inc_times_0.data};%
      \draw[green] plot file {inc_times_2.data};%
      \draw[blue] plot file {inc_times_4.data};%
      \draw[black] plot file {inc_times_6.data};%
      \draw[cyan] plot file {inc_times_8.data};%
      \draw[yellow] plot file {inc_times_10.data};%
      \draw[red,dashed] plot file {inc_times_1.data};%
      \draw[green,dashed] plot file {inc_times_3.data};%
      \draw[blue,dashed] plot file {inc_times_5.data};%
      \draw[black,dashed] plot file {inc_times_7.data};%
      \draw[cyan,dashed] plot file {inc_times_9.data};%
      \draw[yellow,dashed] plot file {inc_times_11.data};%
      {%
         \tiny%
         \grftablegend[tr,fill=white]{|clcl|}{%
               \hline%
               \grflegsymbol[2em]{\grflegline[red]} & \textsc{inc}-1 & \grflegsymbol[2em]{\grflegline[red,dashed]} & \textsc{iiba}-1\\
               \grflegsymbol[2em]{\grflegline[green]} & \textsc{inc}-2 & \grflegsymbol[2em]{\grflegline[green,dashed]} & \textsc{iiba}-2\\
               \grflegsymbol[2em]{\grflegline[blue]} & \textsc{inc}-4 & \grflegsymbol[2em]{\grflegline[blue,dashed]} & \textsc{iiba}-4\\
               \grflegsymbol[2em]{\grflegline[black]} & \textsc{inc}-8 & \grflegsymbol[2em]{\grflegline[black,dashed]} & \textsc{iiba}-8\\
               \grflegsymbol[2em]{\grflegline[cyan]} & \textsc{inc}-16 & \grflegsymbol[2em]{\grflegline[cyan,dashed]} & \textsc{iiba}-16\\
               \grflegsymbol[2em]{\grflegline[yellow]} & \textsc{inc}-32 & \grflegsymbol[2em]{\grflegline[yellow,dashed]} & \textsc{iiba}-32\\
               \hline%
         }%
      }%
   \end{grfgraphic*}
   \caption{Convergence of incremental and incremental bilevel algorithms: primary objective function value as a function of computation time.}\label{fig:nonDiffIncrConvTime}%
\end{figure}

\begin{figure}
   \input{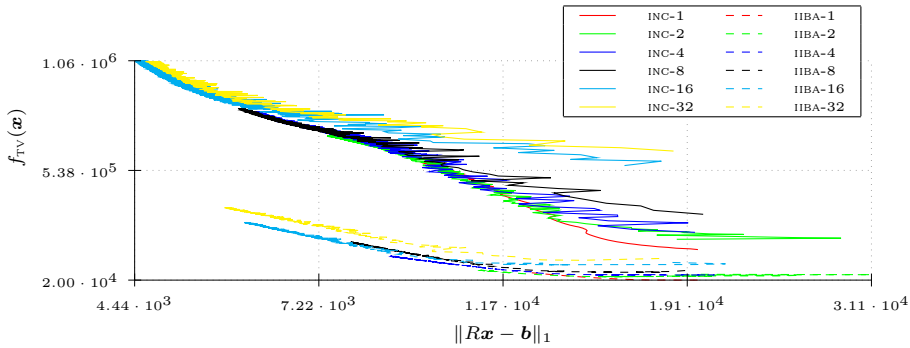}%
   \centering%
   \setlength{\grftotalwidth}{\textwidth}%
   \begin{grfgraphic*}[0.3]{%
      \grfxaxis{\xaxisValues}{\xaxisLabels}%
      \grfyaxis{\yaxisValues}{\yaxisLabels}%
      \grfxlabel{\scriptsize $\| R \vect x - \vect b \|_1$}%
      \grfylabel[L]{\scriptsize $f_{\text{\textsc{tv}}}( \vect x )$}%
      \def\grfymax{1.3}%
   }%
      {%
         \def\grfymax{1}%
         \grfxgrid[grfaxisstyle,gray,dotted]{\xaxisValues}{\xaxisValues}%
         \grfygrid[grfaxisstyle,gray,dotted]{\yaxisValues}{\yaxisValues}%
      }%
      \draw[red] plot file {inc_tv_res_0.data};%
      \draw[green] plot file {inc_tv_res_2.data};%
      \draw[blue] plot file {inc_tv_res_4.data};%
      \draw[black] plot file {inc_tv_res_6.data};%
      \draw[cyan] plot file {inc_tv_res_8.data};%
      \draw[yellow] plot file {inc_tv_res_10.data};%
      \draw[red,dashed] plot file {inc_tv_res_1.data};%
      \draw[green,dashed] plot file {inc_tv_res_3.data};%
      \draw[blue,dashed] plot file {inc_tv_res_5.data};%
      \draw[black,dashed] plot file {inc_tv_res_7.data};%
      \draw[cyan,dashed] plot file {inc_tv_res_9.data};%
      \draw[yellow,dashed] plot file {inc_tv_res_11.data};%
      {%
         \tiny%
         \grftablegend[tr,fill=white]{|clcl|}{%
               \hline%
               \grflegsymbol[2em]{\grflegline[red]} & \textsc{inc}-1 & \grflegsymbol[2em]{\grflegline[red,dashed]} & \textsc{iiba}-1\\
               \grflegsymbol[2em]{\grflegline[green]} & \textsc{inc}-2 & \grflegsymbol[2em]{\grflegline[green,dashed]} & \textsc{iiba}-2\\
               \grflegsymbol[2em]{\grflegline[blue]} & \textsc{inc}-4 & \grflegsymbol[2em]{\grflegline[blue,dashed]} & \textsc{iiba}-4\\
               \grflegsymbol[2em]{\grflegline[black]} & \textsc{inc}-8 & \grflegsymbol[2em]{\grflegline[black,dashed]} & \textsc{iiba}-8\\
               \grflegsymbol[2em]{\grflegline[cyan]} & \textsc{inc}-16 & \grflegsymbol[2em]{\grflegline[cyan,dashed]} & \textsc{iiba}-16\\
               \grflegsymbol[2em]{\grflegline[yellow]} & \textsc{inc}-32 & \grflegsymbol[2em]{\grflegline[yellow,dashed]} & \textsc{iiba}-32\\
               \hline%
         }%
      }%
   \end{grfgraphic*}%
   \caption{Total Variation versus residual norm-1. Notice that the bilevel algorithms present significantly better $f_{\text{\textsc{tv}}}$ values than those for the original model. On the other hand, it is seen here and in Figure~\ref{fig:nonDiffIncrConvTime} that, for example, \textsc{iiba}-32 is competitive, in terms of $f_0$ reduction, with \textsc{inc}-4 while still maintaing a considerable better $f_1$ value than the latter, for the same $f_0$ value.}\label{fig:nonDiffPhasePlane}%
\end{figure}

\paragraph{Numerical Results} By denoting $R_i$, $i \in \{1, 2, \dots, s\}$, the matrix with the rows corresponding to the $i$-th subset, we notice that the computationally demanding parts of the algorithm are products of the form
\begin{equation*}
   R_i\vect x\quad\text{and}\quad R_ i^T\vect y,
\end{equation*}
because the partial subgradients are given by
\begin{equation*}
   R_ i^T\vect{\sign}(R_i \vect x - \vect b).
\end{equation*}
We were not able to make the \textsc{nfft} library as efficient for such partial matrix-vector products, which imposed a large overhead in the partial iterations. We have instead used a ray-tracing algorithm~\cite{hly99,sid85} implemented to run in \textsc{gpu}s (Graphics Processing Units) under single precision floating point arithmetics.

A special scheme was used so that access to slow \textsc{gpu} memory is minimized by performing the computations in sub-images loaded to/from the \textsc{gpu}'s shared memory (a $64$kb fast L1-cache-like memory) in coalesced reads/writes, and summing up the partial results. In this setting, the sequential computation of the $s$ different partial subgradients takes longer than the computation of the subgradient itself, because there are more memory copy to/from shared memory. Yet another per-iteration overhead of the incremental methods are the multiple subiteration updates. Therefore one iteration of the incremental method with $s$ subsets still takes slightly longer than with $s - 1$ subsets.

Even with mandatory overheads, a careful implementation was able to make the iteration-wise speed up provided by the incremental approach advantageous time-wise, as can be seen in Figure~\ref{fig:nonDiffIncrConvTime}. An important feature in this plot is that this speed up is retained by the bilevel algorithms in a similar fashion to the non-bilevel incremental method. That is, if we take into account that considering the secondary objective function in the optimization does, expectedly, slow down the non-incremental method from the viewpoint of primary objective function decrease in comparison to the corresponding non-bilevel algorithm, it can be seen that incremental bilevel techniques too present a speed up in this convergence rate as the number of subsets grow. Computations were performed on a \textsc{gtx} 480 \textsc{gpu} and timing figures were obtained considering iterate updates only, disregarding both data input/output and objective value computation.

A particular point in the experimental results can be seen in Figure~\ref{fig:nonDiffPhasePlane}. In the application of tomographic reconstruction, incrementalism seems to induce more roughness and we therefore see that \textsc{inc}-$s$ achieves lower Total Variation for the same value of 1-norm of the residual than \textsc{inc}-$2s$. Looking at the \textsc{iiba}-$s$ curves in the same plot, we notice that the choice of secondary objective function, which as we have seen is conflicting with the incrementality idea in the one level case, also plays a similar role in the bilevel case. Consequently, at least for our algorithmic parameters selection, there is a incrementality level (or equivalently, primary objective function decrease speed) versus secondary objective function decrease trade-off. Even so, \textsc{iiba}-$32$ provides substantially lower Total Variation for a given 1-norm of the residual compared to \textsc{inc}-$s$ for every $s$ while still achieving faster experimental primary objective function decrease rate than \textsc{inc}-$s$ with $s \leq 4$.

For other bilevel models, at first glance, there seems not to be any reason for an increase in incrementality to lead to worse secondary to primary objective function ratios. However, such antagonism between fast algorithms (with relation to data adherence) and desirable solution properties may appear naturally in models for ill-posed inverse problems like the one we consider here, because in this case overly fit solutions to noisy data are instable and the secondary objective function is usually an attempt at instability prevention.

\subsection{Simulated Data}

The present set of experiments intends to highlight the practical differences and advantages of the bilevel approach
\begin{equation}\label{eq:simbilevel}
   \begin{split}
      \min \quad & \| H\vect x \|_1\\
      \st \quad & \vect x \in\argmin_{\vect y \in \mathbb R^n} \| R\vect y - \vect b \|^2
   \end{split}
\end{equation}
over regularized techniques of the form
\begin{equation}\label{eq:simclassic}
      \min \quad \frac12\| R\vect y - \vect b \|^2 + \gamma \| H\vect x \|_1.
\end{equation}

In order to be able to quantify reconstructed image quality, we use simulated data in these experiments. The ideal image will be denoted by $\vect x^\dagger$ and was a $512 \times 512$ pixels discretized and scaled version of the Shepp-Logan phantom that can be seen at the left of Figure~\ref{fig:rad}. The scaling was such that the relative error after Poisson data simulation was around $10\%$. Tomographic data was computed at $64$ angular samples evenly spaced in $[ 0, \pi )$, each by its turn sampled in $512$ points in $[ -1, 1 ]$.

We have used \textsc{fiba} for solving~\eqref{eq:simbilevel} with just the same parameters (including the secondary operator $\mathcal O_{f_1} = \mathcal N_{f_{\text{Haar}}}$) of those used in Subsection~\ref{subsec:algolip} except that convergence required $\lambda = 2$ and a reasonable starting point for the secondary stepsize sequence was $\mu = 10^2$. Problem~\eqref{eq:simclassic} was solved by the \textsc{fista} algorithm with a constant stepsize $\lambda = 2$. We have tried $\gamma \in \{ 10^2, 10, 1.5, 1, 0\}$. We have run both algorithms for $400$ iterations in this experiment and, as in Subsection~\ref{subsec:algolip}, the algorithms were started with the zero image.

Figure~\ref{fig:slGraphs} shows how the image quality, as measured by the relative error $\| \vect x_k - \vect x^\dagger \| / \| \vect x^\dagger \|$, evolves over the iterations for the tested methods. In Figure~\ref{fig:slImages} we see the best image obtained by each of the methods throughout the iterations. We notice that the \textsc{bilevel} image is competitive with the \textsc{fista} for a certain range of values of $\gamma$, whereas if $\gamma$ is not within the reasonable range, reconstruction by solving~\eqref{eq:simclassic} quickly degrades. In fact, had we used larger values of the starting secondary stepsize good images would still be obtained, maybe requiring more iterations, while larger than ideal values for $\gamma$ in the non-bilevel approach produce a wholly unusable sequence of iterates. On the other hand, smallish secondary stepsize sequences in the bilevel method have practically the same effect than using a small $\gamma$ in the non-bilevel approach, except that the notion of ``small'' includes a wider range of values in the bilevel approach because of the diminishing nature of the sequence $\{\mu_k\}$. Therefore, if we consider the starting secondary stepsize $\mu$ a parameter of the bilevel technique, it is considerably easier to choose than the parameter $\gamma$ of the traditional regularization approach. On the other hand, if a good and efficient procedure for selecting $\gamma$ is available and the regularization function is appropriate, solving~\eqref{eq:simclassic} has the potential of delivering better reconstructions.

\begin{figure}
   \centering%
   \setlength{\grftotalwidth}{\textwidth}%
   \begin{grfgraphic}{%
      \def\grfxmin{-1}\def\grfxmax{2.0}%
      \def\grfymin{-1.5}\def\grfymax{0.5}%
   }%
      \node[inner sep=0pt] at (-0.5,0.0) {\includegraphics[width=\grfxunit]{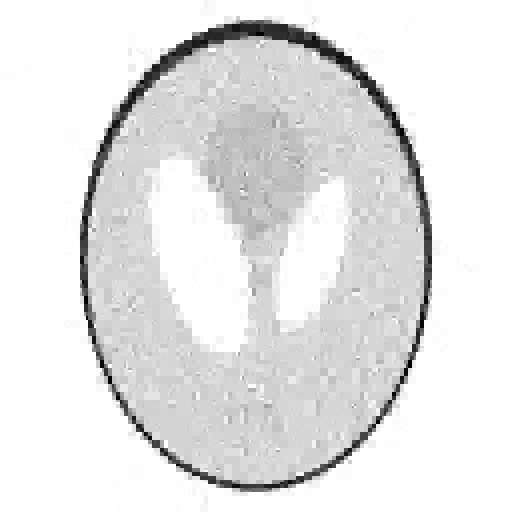}};%
      \input{data_bn}%
      \node[anchor=south west] at (-1.0,-0.5) {\tiny\Tag};%
      \node[anchor=south east] at (0.0,-0.5) {\tiny$\NoiseLevel$};%
      \node[inner sep=0pt] at ( 0.5,0.0) {\includegraphics[width=\grfxunit]{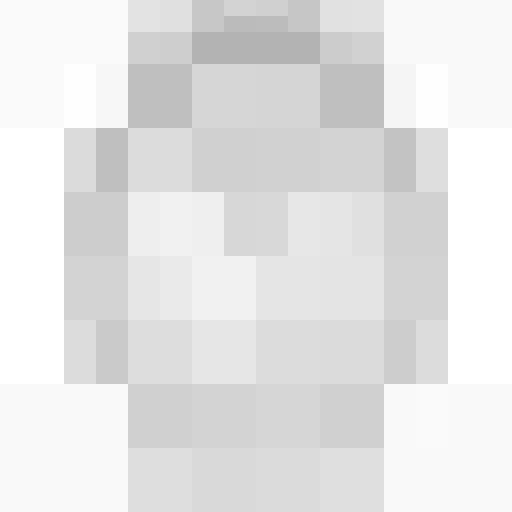}};%
      \input{data_fn_1e2}%
      \node[anchor=south west] at (0.0,-0.5) {\tiny$\Tag$};%
      \node[anchor=south east] at (1.0,-0.5) {\tiny$\NoiseLevel$};%
      \node[inner sep=0pt] at ( 1.5,0.0) {\includegraphics[width=\grfxunit]{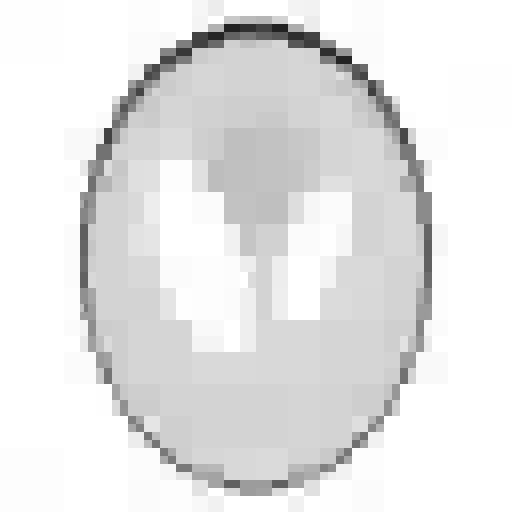}};%
      \input{data_fn_1e1}%
      \node[anchor=south west] at (1.0,-0.5) {\tiny$\Tag$};%
      \node[anchor=south east] at (2.0,-0.5) {\tiny$\NoiseLevel$};%
      \node[inner sep=0pt] at (-0.5,-1.0) {\includegraphics[width=\grfxunit]{{{bi_fn_1.5}}}};%
      \input{data_fn_1.5.tex}%
      \node[anchor=south west] at (-1.0,-1.5) {\tiny$\Tag$};%
      \node[anchor=south east] at (0.0,-1.5) {\tiny$\NoiseLevel$};%
      \node[inner sep=0pt] at ( 0.5,-1.0) {\includegraphics[width=\grfxunit]{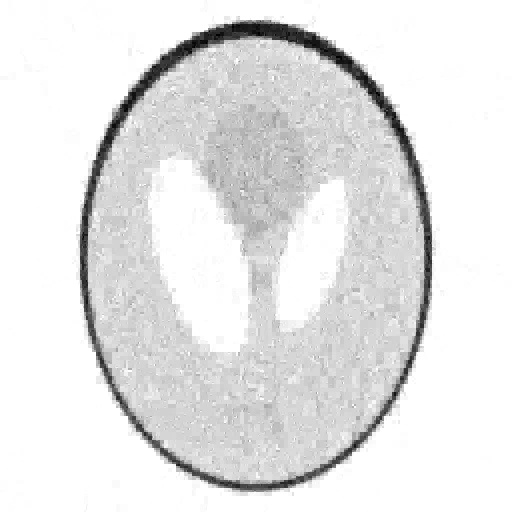}};%
      \input{data_fn_1}%
      \node[anchor=south west] at (0.0,-1.5) {\tiny$\Tag$};%
      \node[anchor=south east] at (1.0,-1.5) {\tiny$\NoiseLevel$};%
      \node[inner sep=0pt] at ( 1.5,-1.0) {\includegraphics[width=\grfxunit]{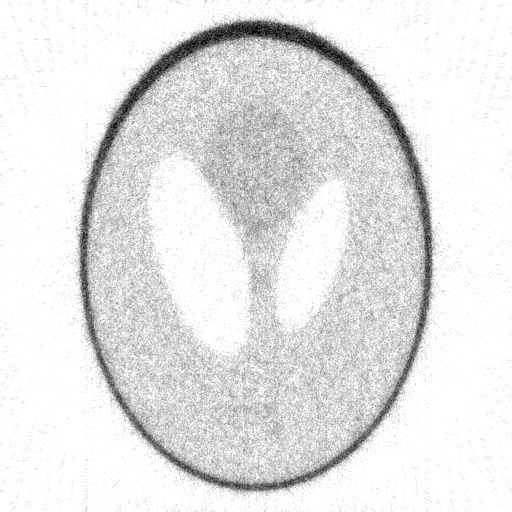}};%
      \input{data_fn_0}%
      \node[anchor=south west] at (1.0,-1.5) {\tiny$\Tag$};%
      \node[anchor=south east] at (2.0,-1.5) {\tiny$\NoiseLevel$};%
      \draw[grfaxisstyle] (-1.0,-0.5) rectangle (0.0,0.5);%
      \draw[grfaxisstyle] (1.0,-0.5) rectangle (0.0,0.5);%
      \draw[grfaxisstyle] (1.0,-0.5) rectangle (2.0,0.5);%
      \draw[grfaxisstyle] (-1.0,-1.5) rectangle (0.0,-0.5);%
      \draw[grfaxisstyle] (1.0,-1.5) rectangle (0.0,-0.5);%
      \draw[grfaxisstyle] (1.0,-1.5) rectangle (2.0,-0.5);%
   \end{grfgraphic}%
   \input{sl_noise_data}%
   \caption{Reconstructions of the Shepp-Logan phantom from simulated noisy data.Top left: best image obtained during execution of \textsc{fiba}. Other images are the best obtained during executions of \textsc{fista}, in these images the bottom-left label is the value of the regularization parameter. The bottom-right label is the relative image error. Relative data error was $\noiselevel$.}\label{fig:slImages}%
\end{figure}

\begin{figure}
   \input{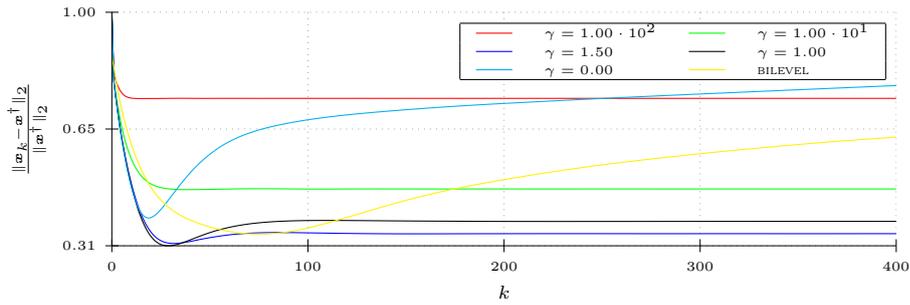}%
   \centering%
   \setlength{\grftotalwidth}{\textwidth}%
   \begin{grfgraphic*}[0.3]{%
      \grfxaxis{\xaxisValues}{\xaxisLabels}%
      \grfyaxis{\yaxisValues}{\yaxisLabels}%
      \grfxlabel{\scriptsize$k$}%
      \grfylabel[L]{\scriptsize$\frac{\| \vect x_k - \vect x^\dagger \|_2}{\| \vect x^\dagger \|_2}$}%
   }%
      \grfxgrid[grfaxisstyle,gray,dotted]{\xaxisValues}{\xaxisValues}%
      \grfygrid[grfaxisstyle,gray,dotted]{\yaxisValues}{\yaxisValues}%
      \draw[red] plot file {sl_noisy_iter_error_0.data};%
      \draw[green] plot file {sl_noisy_iter_error_1.data};%
      \draw[blue] plot file {sl_noisy_iter_error_2.data};%
      \draw[black] plot file {sl_noisy_iter_error_3.data};%
      \draw[cyan] plot file {sl_noisy_iter_error_4.data};%
      \draw[yellow] plot file {sl_noisy_iter_error_5.data};%
      {%
         \tiny%
         \grftablegend[tr,fill=white]{|clcl|}{%
               \hline%
               \grflegsymbol[2em]{\grflegline[red]} & $\gamma = 1.00 \cdot 10^2$ & \grflegsymbol[2em]{\grflegline[green]} & $\gamma = 1.00 \cdot 10^1$\\
               \grflegsymbol[2em]{\grflegline[blue]} & $\gamma = 1.50$ & \grflegsymbol[2em]{\grflegline[black]} & $\gamma = 1.00$\\
               \grflegsymbol[2em]{\grflegline[cyan]} & $\gamma = 0.00$ & \grflegsymbol[2em]{\grflegline[yellow]} & \textsc{bilevel}\\
               \hline%
         }%
      }%
   \end{grfgraphic*}
   \caption{Image quality evolution as iterations proceeds in simulated experiment.}\label{fig:slGraphs}%
\end{figure}

\section{Conclusions}

The present paper introduced an abstract class of explicit numerical methods for instances of bilevel non-differentiable convex optimization problems and proposed two first-order concrete representatives of these new algorithms with reduced per iteration computational cost. The proposed methods have computationally simple iterations. The reported numerical experimentation showed that, when used in tomographic reconstruction problems where the problem size is huge, the low computational complexity of the iterations and the advanced modeling allow to the technique to generate high quality images at a moderate computational cost from sparsely sampled and noisy data. Algorithmic flexibility was highlighted by two conceptually distinct implementations: on one side, an implementation in the high-level Python language, which is free and portable to most computing architectures and environments. On the other hand, the simplicity of the method was also suitable for a low-level hardware-specific implementation fully running on a \textsc{gpu} with no external software library dependency.

\section*{Acknowledgements}

We would like to thank the LNLS for providing the beam time for the tomographic acquisition, obtained under proposal number 17338. We are also grateful to Prof. Marcelo dos Anjos (Federal University of Amazonas) for kindly providing the fish egg samples used in the presented experimentation and Dr. Eduardo X. Miqueles for invaluable help in data acquisition and for the discussions about tomographic reconstruction. We are also indebted to the anonymous referees who gave numerous suggestions that lead to the improvement of the original manuscript.

   \bibliography{paper.bib}
   \bibliographystyle{paper}

\end{document}